\renewcommand{\Re}{\mathrm{Re}}
\renewcommand{\Im}{\mathrm{Im}}
\newcommand{\mc}{\mathcal}
\newcommand{\ms}{\mathscr}
\newcommand{\eps}{\varepsilon}
\numberwithin{equation}{section}
\newcommand{\Q}{\mathbb{Q}}
\newcommand{\R}{\mathbb{R}}
\newcommand{\N}{\mathbb{N}}
\newcommand{\C}{\mathbb{C}}
\newtheorem{theorem}{Theorem}[section]
\newtheorem{lemma}[theorem]{Lemma}
\newtheorem{corollary}[theorem]{Corollary}
\renewcommand{\bar}[1]{\overline{#1}}
\title{An Explicit Vinogradov--Korobov Zero-Free Region for Dirichlet $L$-functions}
\author{Tanmay Khale}
\begin{document}
\maketitle
\begin{abstract}
    We establish the first explicit form of the Vinogradov--Korobov zero-free region for Dirichlet $L$-functions.
\end{abstract}
\section{Introduction and statement of main results}
Vinogradov \cite{vinogradov} and Korobov \cite{korobov} proved that there exists an absolute and effectively computable constant $c>0$ such that the Riemann zeta function $\zeta(\sigma+it)$ does not vanish in the region
\begin{equation}\label{fordeq}
\sigma > 1 - \frac{c}{(\log |t|)^{\frac{2}{3}} (\log \log |t|)^{\frac{1}{3}}}, \qquad \abs{t} \geq 10.
\end{equation}
Ford \cite{fordzfr0}\footnote{The results of this paper depend on the calculations in \cite{fordzfr0}. An updated version with minor numerical corrections can be found in \cite{fordzfr}.} proved that one may take $c = 1/57.54$.  If $|t|$ is sufficiently large, then this improves to $1/49.13$.  In this paper, we prove the first explicit version of \eqref{fordeq} for Dirichlet $L$-functions.

\begin{theorem}\label{main_thm}
Let $q \geq 3$, and let $\chi\pmod{q}$ be a Dirichlet character. The Dirichlet $L$-function $L(\sigma+it,\chi)$ does not vanish in the region
\begin{equation}\label{smallt}
    \sigma \geq 1-\frac{1}{10.5 \log q+61.5(\log |t|)^{2 / 3}(\log \log |t|)^{1 / 3}}, \quad|t| \geq 10.
\end{equation}
Also, there exists an absolute and effectively computable constant $Y > 0$ such that $L(\sigma+it,\chi)$ does not vanish in the region
\begin{equation}\label{larget}
\sigma \geq 1- \frac{1}{ 10.1\log q + 49.13(\log |t|)^{\frac{2}{3}}(\log\log|t|)^{\frac{1}{3}}},\qquad \abs{t} \geq Y.
\end{equation}
\end{theorem}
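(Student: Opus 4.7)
The plan is to follow the classical three-step framework for proving Vinogradov--Korobov zero-free regions, extending Ford's treatment of $\zeta(s)$ to incorporate the modulus $q$. The steps are (i) an explicit upper bound for $|L(s,\chi)|$ in a narrow strip just to the right of $\sigma=1$; (ii) conversion to an upper bound for $-\mathrm{Re}(L'/L)(s)$ via a Borel--Carath\'eodory argument combined with the Hadamard factorization of $\xi(s,\chi)$; and (iii) a contradiction argument that uses a nonnegative trigonometric polynomial applied at a hypothetical zero, together with the pole of $\zeta$ at $s=1$.

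\textbf{The upper bound.} For step (i), I would express $\log L(s,\chi)$ in terms of short character sums via partial summation, reducing matters to controlling $S_N(t,\chi):=\sum_{N<n\leq 2N}\chi(n)\,n^{-it}$. Two regimes appear naturally. For $N$ small relative to a power of $q$, an explicit P\'olya--Vinogradov or Burgess estimate for $\sum_n\chi(n)$ produces a contribution to $\log|L|$ of size $A(1-\sigma)\log q$. For $N$ large, one partitions by residue class modulo $q$ and applies Ford's explicit Vinogradov--Korobov bound for the exponential sum $\sum n^{-it}$, yielding a contribution of size $B(\sigma)(\log|t|)^{2/3}(\log\log|t|)^{1/3}$. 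Summing dyadically and balancing the two regimes gives an additive bound
\[
\log|L(\sigma+it,\chi)| \leq A(1-\sigma)\log q + B(\sigma)(\log|t|)^{2/3}(\log\log|t|)^{1/3}
\]
in a thin strip, with explicit $A,B$ chosen to drive the final constants $10.5$ and $61.5$.

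\textbf{Zero detection.} For steps (ii)--(iii), Borel--Carath\'eodory on an optimized disk centered slightly to the right of $1+it$ converts the bound on $|\log L|$ into a bound on $\mathrm{Re}\log L$ and then, via the Hadamard product, into an upper bound for $-\mathrm{Re}(L'/L)$ on a smaller disk with a known zero contribution. Exploiting the positivity $3+4\cos\theta+\cos 2\theta\geq 0$ at the arguments $\sigma$, $\sigma+it$, $\sigma+2it$ with characters $\chi_0$, $\chi$, $\chi^2$ yields
\[
-3\,\mathrm{Re}\frac{\zeta'}{\zeta}(\sigma) - 4\,\mathrm{Re}\frac{L'}{L}(\sigma+it,\chi) - \mathrm{Re}\frac{L'}{L}(\sigma+2it,\chi^2) \geq 0.
\]
The pole of $\zeta$ contributes $3/(\sigma-1)+O(1)$, and a hypothetical zero $\beta+i\gamma$ of $L(s,\chi)$ near $1+it$ contributes $-4/(\sigma-\beta)$; the remaining terms are controlled by step (ii). Choosing $\sigma=\beta+\delta$ for an optimized $\delta$ forces $\beta$ to lie below the claimed threshold.

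\textbf{Main obstacle.} The principal difficulty is the careful, uniform bookkeeping of every constant so that the denominator in \eqref{smallt} splits cleanly and additively as $10.5\log q+61.5(\log|t|)^{2/3}(\log\log|t|)^{1/3}$, rather than degrading into a coupled expression of the form $C(\log q|t|)^{2/3}(\log\log q|t|)^{1/3}$. This requires joint optimization of the Borel--Carath\'eodory radius, the truncation point in the partial summation, the transition between the character-dominated and Vinogradov-dominated regimes for $S_N$, and possibly a more refined trigonometric polynomial than $3+4\cos\theta+\cos 2\theta$. The large-$|t|$ improvement from $61.5$ to $49.13$ in \eqref{larget} should follow by substituting Ford's sharper asymptotic constants for the exponential-sum estimate, valid once $|t|\geq Y$, while the factor $10.1\log q$ reflects the negligible effect on the $q$-aspect of letting $|t|\to\infty$.
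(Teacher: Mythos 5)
Your plan reproduces the classical skeleton for explicit zero-free regions and, carefully executed, would yield \emph{some} Vinogradov--Korobov region for Dirichlet $L$-functions, but not the constants $61.5$ and $49.13$ in \eqref{smallt} and \eqref{larget}; the obstruction is structural, not bookkeeping. The paper never uses Borel--Carath\'eodory, nor the degree-two polynomial $3+4\cos\theta+\cos 2\theta$. Following Ford, it replaces the former by an exact Jensen-type identity (Lemma~\ref{lemma2.2}) expressing $-\Re(L'/L)(z_0)$ as a sum of cotangents over nearby zeros plus two averaged $\log|L|$ integrals on vertical lines; it uses the degree-four nonnegative polynomial $\sum_{j=0}^4 b_j\cos(j\theta)=8(a_1+\cos\theta)^2(a_2+\cos\theta)^2$ with $a_1=0.225$, $a_2=0.9$ applied to $\chi^0,\dots,\chi^4$; and, crucially, it mollifies the prime sum with Heath-Brown's smoothing pair $w=g*g$, $W$, through the weighted sum $K_\chi(s)=\sum_n\Lambda(n)\chi(n)n^{-s}f(\log n)$ of Lemma~\ref{lemma4.5}. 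All three refinements are required even to reach Ford's constant $57.54$ for $\zeta$ alone, so Borel--Carath\'eodory plus a (possibly higher-degree) trigonometric polynomial cannot get you to $49.13$. The $|L|$-bound (Lemma~\ref{appliedford}) is also obtained differently: not from a dyadic balance of Burgess/P\'olya--Vinogradov against Vinogradov sums, but from the single identity $L(s,\chi)=q^{-s}\sum_{a=1}^q\chi(a)\zeta(s,a/q)$ combined with Ford's Hurwitz-zeta bound \eqref{eq1.2}, which gives the clean additive $(1-\sigma)\log q$ term directly; explicit Burgess bounds would not produce a $q$-aspect coefficient anywhere near $10.5$.

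A second missing ingredient is the bootstrapping structure. Theorem~\ref{main_thm} is deduced from Theorem~\ref{theorem2}, which is a self-improvement statement: because the smoothing-kernel machinery requires $0<\lambda\le\min(1-\beta,\eta/198)$ together with the zero-freeness hypothesis~\eqref{eq6.1} on the window $[\gamma-1,4\gamma+1]$, one must already have a (weaker) zero-free region in hand before the optimized parameter $\eta\asymp B^{-2/3}(\log\log t/\log t)^{2/3}$ can be chosen without circularity. This prior input is supplied by Jutila's log-free zero-density estimate in the proof of Corollary~\ref{largetcorollary} (large $t$) and by the preliminary Vinogradov--Korobov region of Appendix~\ref{appB}, which guarantees $M>0$ in \eqref{eq8.1}. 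Your proposal has no analogue of this scaffolding, and it is not clear how you would justify the choice of your ``Borel--Carath\'eodory radius'' at its optimal scale without it.
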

As part of our proof of Theorem \ref{main_thm}, we first require that there exists {\it some} absolute and effectively computable constant $c'>0$ such that $L(\sigma+it,\chi)\neq 0$ in the region
\begin{equation}
\label{eqn:inexplicit_ZFR}
\sigma\geq 1-\frac{c'}{\log q + (\log|t|)^{\frac{2}{3}}(\log\log|t|)^{\frac{1}{3}}},\qquad \abs{t} \geq 10.
\end{equation}
Montgomery \cite[p. 176]{montgomery} states this result without reference or proof. A zero-free region of the form \eqref{eqn:inexplicit_ZFR} with weaker $q$-dependence follows from Mitsui \cite[Lemma 11]{Mitsui}. Several authors (e.g.,  \cite{Bartz,coleman,Hinz}) have proved an analogue of \eqref{eqn:inexplicit_ZFR} for $L$-functions of Hecke characters over a number field $K$, and the result for Dirichlet characters follows by taking $K=\mathbb{Q}$. For completeness, we give a short proof of \eqref{eqn:inexplicit_ZFR} in in Appendix \ref{appB} that relies only on well-known bounds for the sum in \eqref{eqn:Vinogradov_exponential_sum} along with a convenient version of Jensen's formula from complex analysis.
\subsection*{Acknowledgements}
The author thanks Jesse Thorner both for suggesting this project and for his extensive feedback which improved the quality of the presentation in this paper. The author also thanks Kevin Ford for numerous enlightening conversations. All numerical computations were carried out using Mathematica 12.
\section{Notation and Preliminaries}
\subsection{Preliminaries on Dirichlet $L$-functions}
The results in 
this subsection can be found in \cite{davenport}. 
Let $\chi\pmod{q}$ be a Dirichlet character. The Dirichlet $L$-function $L(s,\chi)$ is defined by the absolutely convergent Dirichlet series
\begin{equation*}
    L(s,\chi) = \sum_{n=1}^\infty \frac{\chi(n)}{n^s}
\end{equation*}
in the half-plane $\Re(s) > 1$. If $\chi$ is nonprincipal, then $L(s,\chi)$ extends to an entire function, while if $\chi$ is principal, then $L(s,\chi)$ extends to a meromorphic function on the complex plane with a single simple pole at $s=1$.

We say that $\chi$ is odd (resp. even) if $\chi(-1) = -1$ (resp. $\chi(-1)=1$). We write
\[
\mathfrak{a}{(\chi)}=\begin{cases}0 & \text { when } \chi \text { is even,} \\ 1 & \text { when } \chi \text { is odd.}\end{cases}
\]
When there is no ambiguity concerning the Dirichlet character under consideration, we write $\mathfrak{a}$ instead of $\mathfrak{a}(\chi)$. 
We also write
\[
\delta(\chi)= \begin{cases}0 & \text { when } \chi \text { is nonprincipal,} \\ 1 & \text { when } \chi \text { is principal.}\end{cases}
\]

Given $L(s,\chi)$ we define the \textit{completed} $L$-function $\xi(s,\chi)$ by
\begin{equation}\label{completed}
    \xi(s,\chi) = (s(s-1))^{\delta(\chi)} \Big( \frac{q}{\pi}\Big)^{\frac{s + \mathfrak{a}}{2}} \Gamma\Big(\frac{s + \mathfrak{a}}{2} \Big) L(s,\chi).
\end{equation}
If $\chi$ is primitive, there exists a constant $W(\chi)$ of modulus one such that the completed $L$-function satisfies the functional equation
\begin{equation}\label{fe}
    \xi(1-s, \bar{\chi})=W(\chi) \xi(s, \chi).
\end{equation}
Furthermore, $\xi(s,\chi)$ is an entire function of order one, and hence possesses a Hadamard factorization 
\begin{equation*}
    \xi(s,\chi) = \prod_{\xi(\rho,\chi) = 0} e^{\frac{s}{\rho}}\Big(1 - \frac{s}{\rho} \Big).
\end{equation*}
It follows from the analytic continuation and functional equation for $\xi(s,\chi)$ that if $\chi$ is primitive, then $L(s,\chi)$ has trivial zeros at $(-2n-2\delta(\chi)-\mathfrak{a}(\chi))_{n=0}^{\infty}$. If $\chi\pmod{q}$ is induced by the primitive character $\chi'\pmod{q'}$, then
\[
L(s,\chi) = L(s,\chi') \prod_{\substack{p \mid q \\ p \nmid q'}} \Big(1 - \frac{1}{p^s} \Big),
\]
and $L(s,\chi)$ has zeros on the line $\Re(s) = 0$ corresponding to the zeros of the factors $1 - p^{-s}$. We define
\[
\ms{I}(\chi)
=
\begin{cases}
0 & \text{if } \chi \text{ is primitive,} \\
1 & \text{otherwise.}
\end{cases}
\]

Observe that since $L(s,\bar{\chi}) = \bar{L(\bar{s},\chi)}$, the zeros of $L(s,\chi)L(s,\bar{\chi})$ come in conjugate pairs. Consequently, if the region $R \subset \C$ is free of zeros of $L(s,\chi)L(s,\bar{\chi})$, the reflection $\bar{R}$ is also zero-free. It therefore suffices to demonstrate a zero-free region holding for arbitrary $\chi$ and $\Im(s) \geq 0$ to conclude a zero-free region holding for arbitrary $\chi$ and $\Im(s)$ positive or negative. Throughout the remainder of the paper we will impose the restriction $\Im(s) \geq 0$ without loss of generality. By McCurley's \cite[Theorem 1.1]{mccurley} explicit version of a classical zero-free region of de la Vall\'ee Poussin, $L(\sigma+it,\chi)$ is non-vanishing for $t \geq 0$ and
\begin{equation}\label{mccurley}
    \sigma \geq 1- \frac{1}{ 9.64590880\log(\max\{q,qt,10\})}.
\end{equation}
apart from a possible real zero which is necessarily simple.
\subsection{Estimates for exponential sums}
Let $N \in \N$ and $u,t \in \R$, satisfying
\[
0<u\leq 1,\quad 1\leq N\leq t,\quad \lambda = \frac{\log t}{\log N}.
\]
Throughout this paper, we will assume that there exist constants $A,B > 0$ such that the Hurwitz zeta function, defined for $\sigma > 1$ by
\[
\zeta(\sigma+it,u)=\sum_{n=0}^\infty (n+u)^{-(\sigma+it)},
\]
and extended to a meromorphic function on the complex plane by analytic continuation, satisfies the bound
\begin{align}\label{eq1.2}
    |\zeta(\sigma+i t, u)-u^{-(\sigma+it)}| \leq A t^{B(1-\sigma)^{3 / 2}} (\log t)^{2 / 3}, \qquad t \geq 3,~1/2\leq \sigma \leq 1 .
\end{align}
Ford \cite[Theorem 1]{ford} shows that one may take $A = 76.2$ and $B = 4.45$ in \eqref{eq1.2}. More generally, in \cite[Lemma 7.3]{ford}, he establishes how the values of $A,B$ above explicitly depend on the constants in an upper bound for the exponential sums
\begin{equation}\label{eqn:Vinogradov_exponential_sum}
S(N, t)=\max _{0<u \leq 1} \max _{N<R \leq 2 N}\Big|\sum_{N<n \leq R}(n+u)^{-i t}\Big|.
\end{equation}
Namely, suppose that there exist constants $C,D>0$ such that $S(N, t) \leq C N^{1-1 /(D \lambda^{2})}$.
Let $B=\frac{2}{9} \sqrt{3 D}$. 
Assume further that $t \geq 10^{100}$ and $\frac{15}{16} \leq \sigma \leq 1$. By \cite[Lemma 7.3]{ford}, we then have that
\[
\begin{gathered}
|\zeta(\sigma+it, u)-u^{-(\sigma+it)}| \leq\Big(\frac{C+1+10^{-80}}{(\log  t)^{2/3}}+1.569 C D^{1 / 3}\Big) t^{B(1-\sigma)^{3 / 2}} (\log t)^{2/3}.
\end{gathered}
\]
\subsection{Notation}
We let
\[
\gamma_{\mathbb{\Q}} = \lim_{N\to\infty}\Big(\sum_{n=1}^N n^{-1}-\log N\Big) = 0.5772156649\dots
\]
denote the Euler--Mascheroni constant $\gamma_{\mathbb{\Q}}$. We write $\log_3(x) = \log\log\log(x)$ to denote the third-iterate of the logarithm function. Finally, any sum written in the form
\begin{equation*}
    \sum_{L(\rho,\chi) = 0} f(\rho)
\end{equation*}
in this paper is always a sum over zeros $\rho$ satisfying $0 < \Re(\rho)  < 1$, each counted with multiplicity.
\section{General Results}
Theorem \ref{main_thm} is a consequence of the following more general result.
\begin{theorem}
\label{theorem2}
Assume that there exist  constants $A>0$ and $0<B \leq 4.45$ such that \eqref{eq1.2} holds. Let $T_{0} \geq e^{1938}$ satisfy $\frac{\log T_{0}}{\log \log T_{0}} \geq \frac{1139}{B}$.  For $t\geq T_0-1$, define
\[
X(t)=1.16 \log (A+1)+0.845593+0.23531 \log \Big(\frac{B}{\log \log \tau}\Big)+\Big(\frac{1.694}{B^{4 / 3}}-\frac{2.087}{B^{1 / 3}}\Big)\Big(\frac{\log \log \tau}{\log \tau}\Big)^{\frac{1}{3}},
\]
where $\tau = 4t+1$, and
\[
M_{1}=\min \Big(0.055071, \frac{0.16521 -\frac{0.184833}{\log T_0}}{2.9997+\max _{t \geq T_{0}} X(t) / \log \log t}\Big).
\]
If for all characters $\chi\pmod{q}$ we have that  $L(\sigma+it,\chi)$ does not vanish in the region $t\in[T_0-1,T_0]$ and
\begin{equation}
\label{eq1.3}
\sigma \geq 1-\frac{1}{(10.082 + \frac{1.607}{\log\log T_0})\log q+ 9.791\log\log q + B^{2/3} M_1^{-1}(\log t)^{2 / 3}(\log \log t)^{1 / 3}},
\end{equation}
then for all $t \geq T_0$ and all $\chi\pmod{q}$, $L(\sigma+it,\chi)$ is nonvanishing in the region \eqref{eq1.3}.  
\end{theorem}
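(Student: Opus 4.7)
My plan is to adapt the classical Vinogradov--Korobov argument to Dirichlet $L$-functions, keyed to the nonnegativity $3+4\cos\theta+\cos 2\theta \geq 0$. Applied to the Dirichlet series expansion of $-L'/L$, this yields, for $\sigma > 1$ and any fixed $t_0 \in \R$,
\[
0 \leq -3\,\Re\frac{L'}{L}(\sigma,\chi_0) - 4\,\Re\frac{L'}{L}(\sigma+it_0,\chi) - \Re\frac{L'}{L}(\sigma+2it_0,\chi^2),
\]
where $\chi_0$ is the principal character modulo $q$. I take $t_0 \geq T_0$ to be the ordinate of a putative zero $\rho_0 = \beta_0 + it_0$ of $L(s,\chi)$ violating \eqref{eq1.3} and aim to derive a contradiction.

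\textbf{Hadamard expansion and extraction of the critical term.} Next, I expand each logarithmic derivative using the Hadamard factorization of the completed $L$-function $\xi(s,\chi)$ from \eqref{completed}, obtaining
\[
-\Re\frac{L'}{L}(s,\chi) = \tfrac{1}{2}\log\tfrac{q}{\pi} + \tfrac{1}{2}\Re\tfrac{\Gamma'}{\Gamma}\Big(\tfrac{s+\mathfrak{a}}{2}\Big) + \delta(\chi)\,\Re\Big(\tfrac{1}{s-1}+\tfrac{1}{s}\Big) - \sum_{\rho}\Re\frac{1}{s-\rho} + \text{const},
\]
plus corrections of size $O(\log q)$ encoding the factor $\ms{I}(\chi^2)$ whenever $\chi^2$ is imprimitive. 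Because $\Re(1/(s-\rho)) > 0$ for $\sigma > \Re\rho$, dropping all zeros but $\rho_0$ from the $\chi$-sum supplies the contribution $-1/(\sigma-\beta_0)$ to $-\Re(L'/L)(\sigma+it_0,\chi)$, providing, after multiplication by $4$, the critical negative driver $-4/(\sigma-\beta_0)$ in the positivity inequality.

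\textbf{Upper bounds on the remaining pieces.} The complementary step is to upper bound $-\Re(L'/L)$ at $s_0 = \sigma_0 + it_0$ with $\sigma_0$ slightly larger than $1$. Writing $L(s,\chi) = q^{-s}\sum_{a=1}^{q}\chi(a)\,\zeta(s,a/q)$ and invoking the hypothesis \eqref{eq1.2} yields an upper bound on $|L(s,\chi)|$ in a thin vertical strip to the left of $\sigma=1$, with explicit $q$- and $t$-dependence. Paired with the absence of zeros guaranteed by McCurley's bound \eqref{mccurley} and by the inductive hypothesis that \eqref{eq1.3} already holds on $[T_0-1, T_0]$, a Borel--Carath\'eodory / Jensen argument in the spirit of Appendix~\ref{appB} converts this into
\[
-\Re\frac{L'}{L}(s_0,\chi) \leq \alpha_1 \log q + \alpha_2 \log\log q + \alpha_3 B^{2/3}(\log t_0)^{2/3}(\log\log t_0)^{1/3},
\]
with analogous estimates at $\sigma_0$ (contributing the clean $\log q$ term through the pole of $L(s,\chi_0)$) and at $\sigma_0 + 2it_0$.

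\textbf{Optimization and main obstacle.} Inserting these bounds into the positivity inequality and choosing the shift $\sigma_0 - 1$ to balance $1/(\sigma_0 - \beta_0)$ against the $(\log t)^{2/3}(\log\log t)^{1/3}$ terms yields the claimed zero-free region. The skeleton is classical; the main obstacle is the numerical bookkeeping required to obtain the exact coefficients $10.082$, $9.791$, and $B^{2/3}M_1^{-1}$ in \eqref{eq1.3}, together with the intricate formulas defining $X(t)$ and $M_1$. One must propagate the constants $A,B$ of \eqref{eq1.2} through the Hurwitz-zeta-to-$L$-function transition, through the Borel--Carath\'eodory estimate, and through the final shift optimization. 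This accounting, rather than any novel idea, is what dictates the lower thresholds $T_0 \geq e^{1938}$ and $\log T_0 / \log\log T_0 \geq 1139/B$, whose role is to force the secondary error terms to be absorbed into the displayed coefficients.
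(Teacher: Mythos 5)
Your proposal outlines a classical de la Vall\'ee Poussin / Landau-style argument, but it diverges from the paper's actual method in several essential respects, and as written it would not yield the specific constants claimed in the theorem.

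First, the trigonometric inequality. You invoke the classical $3 + 4\cos\theta + \cos 2\theta \geq 0$. The paper explicitly notes that this only ``plays the same role'' as what is actually used, namely the quartic nonnegative polynomial $8(a_1+\cos\theta)^2(a_2+\cos\theta)^2$ of \eqref{eq5.1}--\eqref{eq5.2} with the optimized choices $a_1 = 0.225$, $a_2 = 0.9$ (giving $b_0 \approx 10.01$, $b_1 \approx 17.15$, $b_5 \approx 33.33$). The ratio $b_1/b_0$ and the quantity $b_5/b_0$ govern the leading constants $0.16521$, $2.9997$, etc.\ in $M_1$; the 3--4--1 inequality would give materially worse numbers and would not reproduce \eqref{eq1.3}.

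Second, the detection of the zero. You propose expanding $-\Re(L'/L)$ via the Hadamard product and dropping all but the critical zero. The paper instead uses Heath-Brown's mollified sum $K_\chi(s) = \sum_n \Lambda(n)\chi(n) n^{-s} f(\log n)$ with a compactly supported $f$ built from a convolution square of a cosine bump (equations \eqref{eq6.5}--\eqref{eq6.8}), together with the strip form of Jensen's formula (Lemma~\ref{lemma2.2}). This machinery is what allows nearby zeros to be counted through $N_\chi(t,\eta)$ and absorbed into the error term via Lemmas~\ref{lemma4.2}--\ref{lemma4.3}; a bare Hadamard expansion gives no comparable control over zero clustering, which is exactly where the $(\log t)^{2/3}(\log\log t)^{1/3}$ saving would be lost.

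Third, the logical structure. Theorem~\ref{theorem2} is proved by contradiction with an infimum construction: one defines $M = \inf Z(\beta,\gamma)$ over putative bad zeros, uses Corollary~\ref{inexplicitcorollary} to know $M>0$, picks a near-extremal zero, and then chooses $\lambda$ and $\eta$ in \eqref{eq8.2}--\eqref{eq8.3} so that Lemma~\ref{lemma7.1} applies and forces $M \geq M_1$. The $\lambda$-choice encodes the condition \eqref{eq6.1} that there are no zeros in a thin strip near $1 + i j\gamma$ for $1 \le j \le 4$, and this is precisely where the hypothesis that \eqref{eq1.3} already holds on $[T_0-1,T_0]$ enters. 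Your write-up describes none of this; ``Borel--Carath\'eodory in the spirit of Appendix~B plus bookkeeping'' is not a substitute, since Appendix~B deliberately proves a much weaker inequality with constants $18$ and $86$--$104$ rather than $10.082$ and $1/M_1$. The gap is not merely numerical polish: without the quartic polynomial, the Heath-Brown mollifier, the zero-counting Lemmas~\ref{lemma4.2}--\ref{lemma4.3}, and the $\lambda,\eta$ optimization, the stated constants and the precise form of $X(t)$ and $M_1$ cannot be derived.
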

Theorem \ref{theorem2} has the following corollary for large $t$.
\begin{corollary}
\label{largetcorollary}
For all $0 < B \leq 4.45$, there exists an effectively computable constant $T_0(B)$ depending at most on $B$ such that $L(\sigma+it,\chi)$ is nonvanishing in the region
\begin{equation}\label{corollaryeq}
    \sigma \geq 1 -\frac{1}{10.1\log q + (0.05507)^{-1} B^{2/3} (\log |t|)^{2/3} (\log\log|t|)^{1/3}}, \qquad t \geq T_0(B).
\end{equation}
\end{corollary}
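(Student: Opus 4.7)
My plan is to apply Theorem~\ref{theorem2} with its parameter $T_0$ set equal to a suitably large $T_0(B)$ depending only on $B$, and then check that the resulting zero-free region \eqref{eq1.3} is contained in the region \eqref{corollaryeq} claimed by the corollary. First I would arrange two numerical conditions on $T_0(B)$: (i) $10.082 + 1.607/\log\log T_0(B) \leq 10.1$, equivalently $\log\log T_0(B) \geq 1607/18$; and (ii) $M_1(T_0(B)) \geq 0.05507$, equivalently $M_1^{-1}\leq(0.05507)^{-1}$. For (ii), as $T_0\to\infty$ the second argument of the $\min$ defining $M_1$ tends to $0.16521/2.9997>0.055071$, since $X(t)/\log\log t\to 0$ (the dominant term of $X(t)$ being $0.23531\log(B/\log\log\tau) = O(\log\log\log t)$), so $M_1(T_0)\to 0.055071 > 0.05507$.

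Granted (i), (ii), and the Theorem~\ref{theorem2} applicability conditions $T_0(B)\geq e^{1938}$ and $\log T_0(B)/\log\log T_0(B)\geq 1139/B$, the containment $\eqref{eq1.3}\subseteq\eqref{corollaryeq}$ reduces to the inequality
\[
\alpha\log q + \beta B^{2/3}(\log t)^{2/3}(\log\log t)^{1/3} \geq 9.791\log\log q
\]
with $\alpha := 10.1 - 10.082 - 1.607/\log\log T_0(B)\geq 0$ and $\beta := (0.05507)^{-1} - M_1^{-1}\geq 0$. I would dispatch this by splitting on the size of $q$: when $\log q/\log\log q\geq 9.791/\alpha$, the $\alpha\log q$ term alone suffices; for smaller $q$, $\log\log q$ is bounded above by an effectively computable $C(B)$, and a further enlargement of $T_0(B)$ guarantees $\beta B^{2/3}(\log T_0(B))^{2/3}(\log\log T_0(B))^{1/3}\geq 9.791 C(B)$, which then holds for all $t\geq T_0(B)$.

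The principal obstacle is verifying the hypothesis of Theorem~\ref{theorem2} at such a large $T_0(B)$, namely nonvanishing of $L(\sigma+it,\chi)$ in the region \eqref{eq1.3} throughout the band $t\in[T_0(B)-1,T_0(B)]$. McCurley's bound \eqref{mccurley} covers \eqref{eq1.3} only when $T_0$ is near the lower threshold $e^{1938}$, where the sublinear contribution $B^{2/3}M_1^{-1}(\log T_0)^{2/3}(\log\log T_0)^{1/3}$ can still outweigh $9.646\log T_0$; it fails for the much larger $T_0(B)$ demanded by (i)--(ii). To bridge the gap, I would bootstrap by applying Theorem~\ref{theorem2} along a sequence $e^{1938}=T_0^{(0)}<T_0^{(1)}<\cdots<T_0^{(N)}=T_0(B)$, using the zero-free region established at step $j$ together with classical explicit bounds on $L$ near $\sigma=1$ derived from \eqref{eq1.2} to cover the thin sliver needed to verify the band at step $j+1$.
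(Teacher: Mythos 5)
Your first step---showing that \eqref{eq1.3} is contained in \eqref{corollaryeq} once $T_0$ is large, by forcing $10.082+1.607/\log\log T_0\leq 10.1$ and $M_1^{-1}\leq(0.05507)^{-1}$ and then splitting on the size of $q$ to absorb $9.791\log\log q$---is essentially the paper's argument (the paper splits on a threshold $Q_0$ rather than on $\log q/\log\log q$, but these are equivalent). The problem is the second step: your bootstrap cannot verify the hypothesis of Theorem~\ref{theorem2} at the final $T_0(B)$.

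The obstruction is a monotonicity issue you have backwards. As $T_0$ increases, the denominator in \eqref{eq1.3} \emph{decreases} (the coefficient $10.082+1.607/\log\log T_0$ shrinks, and $M_1$ only grows toward its cap $0.055071$), so the zero-free region \eqref{eq1.3} \emph{widens}. Thus the hypothesis of Theorem~\ref{theorem2} at $T_0^{(j+1)}$ asks for zero-freeness in a \emph{larger} sub-strip of the band $t\in[T_0^{(j+1)}-1,T_0^{(j+1)}]$ than what the conclusion at step $j$ (with $T_0^{(j)}<T_0^{(j+1)}$) furnishes. The ``thin sliver'' separating the two is precisely the set where you have established nothing, and the bound \eqref{eq1.2} is of no help there: it gives an \emph{upper} bound on $|\zeta(\sigma+it,u)|$, hence on $|L(\sigma+it,\chi)|$, and an upper bound on a holomorphic function cannot preclude zeros. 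Re-deriving nonvanishing in that sliver from \eqref{eq1.2} would amount to reproving the zero-free region itself, which is circular. So the induction does not close.

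What is actually needed is a tool that can certify, unconditionally, that \emph{some} unit band $[T_0-1,T_0]$ at a prescribed height scale is free of zeros in \eqref{eq1.3}. The paper does this with a log-free zero-density estimate (Jutila's $N(\sigma,T,\chi)\ll_\varepsilon (qT)^{(2+\varepsilon)(1-\sigma)}$): choosing $\sigma$ at the boundary of (a slightly weakened version of) \eqref{eq1.3}, the total count of offending zeros up to height $T$ is $\leq T/2$ for $T$ large, so among the $\gg T$ disjoint unit bands in $[\sqrt T,T]$ at least one is entirely zero-free in the region, and that band supplies an admissible $T_0$. That pigeonhole step, not a bootstrap from $e^{1938}$, is the missing idea; without it there is no way to reach the large $T_0(B)$ your conditions (i)--(ii) require.
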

\begin{proof}

We first show that if $T_0 = T_0(B)$ is sufficiently large and if \eqref{eq1.3} holds for all $t \geq T_0$, then \eqref{corollaryeq} holds in this range. Observe that since $0.055071 \leq \frac{0.16520}{2.9997}$ and $X(t)$ is bounded, $M_1 = 0.055071$ for $t \geq T_0$ and $T_0$ sufficiently large. Next, in order to bound the $9.791 \log\log q$ term in the denominator of \eqref{eq1.3} from above, we split into the two cases $q \geq Q_0$ and $q < Q_0$, where $Q_0$ is a sufficiently large absolute constant. If $T_0$ and $Q_0$ are sufficiently large (where the thresholds for both $T_0$ and $Q_0$ are absolute), then for all $q \geq Q_0$ we have that $10.082 + \frac{1.607}{\log\log T_0} + 9.791\frac{\log \log q}{\log q} \leq 10.1$. On the other hand, if $q < Q_0$ and $T_0$ is sufficiently large (in terms of $Q_0$ and $B$), then
\[
\log\log q < \log\log Q_0 < (0.05507^{-1}-0.055071^{-1}) B^{2/3} (\log |t|)^{2/3} (\log \log|t|)^{1/3}.
\]
These observations together imply that if $T_0$ is sufficiently large and $t \geq T_0$, then the denominator in \eqref{eq1.3} is bounded above by $10.1 \log q + (0.05507)^{-1}B^{2/3}(\log |t|)^{2/3} (\log\log |t|)^{1/3}$. Consequently, Theorem \ref{theorem2} implies Corollary \ref{largetcorollary} as long as for some positive $T_0$, the condition that \eqref{eq1.3} holds for all zeros $\beta+i\gamma$ with $T_0-1 \leq \gamma \leq T_0$ is realized. We will show that this is indeed the case using a log-free zero-density estimate for Dirichlet $L$-functions.

Define $N(\sigma,T,\chi)=\#\{\rho = \beta+i \gamma\colon  L(\rho,\chi)=0,~\sigma \leq \beta \leq 1, |\gamma|\leq T\}$, where each zero $\beta + i\gamma$ is counted with multiplicity. Write $c = (0.055071)^{-1}$. Let $\eps > 0$ be arbitrary. By \cite[Theorem 1]{jutila}, there exists an effectively computable constant $c_{\eps}>0$ such that in the range $4/5 \leq \sigma \leq 1$ and $T \geq 1$, we have 
\begin{equation*}
    N(\sigma,T,\chi) \leq c_\eps (qT)^{(2+\eps)(1-\sigma)}.
\end{equation*}
Taking $\sigma = 1 - (10.2 \log q + (c/2) B^{2/3} (\log T)^{2/3} (\log \log T)^{1/3})^{-1}$, we find that if $T$ is sufficiently large, then
\begin{equation*}
    N(\sigma,T,\chi) \leq c_\eps e^{\frac{2+\eps}{10.2}} \exp\Big(\frac{(2+\eps)}{(c/2) B^{2/3}} \Big(\frac{\log T}{\log\log T} \Big)^{1/3}\Big) \leq \frac{T}{2}.
\end{equation*}
As such, if $T$ is sufficiently large, then there exists $T_0\in [\max\{e^{1938},\sqrt{T}\},T]$ 
such that all zeros $\beta + i\gamma$ of $L(s,\chi)$ with $T_0-1 \leq \gamma \leq T_0$ satisfy
\begin{multline*}
\sigma < 1- (10.2\log q + (c/2) B^{2/3} (\log T)^{2/3} (\log \log T)^{1/3})^{-1}\\
\leq 1- (10.2\log q + c B^{2/3} (\log \gamma)^{2/3} (\log \log \gamma)^{1/3})^{-1},
\end{multline*}
as desired.
\end{proof}
We now demonstrate how Theorem \ref{main_thm} may be deduced from Theorem \ref{theorem2}.
\begin{proof}[Proof of Theorem \ref{main_thm}, assuming Theorem \ref{theorem2}]
Note that \eqref{larget} is immediate from Corollary \ref{largetcorollary} with $B = 4.45$. It remains to demonstrate that 
\eqref{smallt} follows from Theorem \ref{theorem2}. Note that \eqref{larget} is immediate from Corollary \ref{largetcorollary} with $B = 4.45$. It remains to demonstrate that 
\eqref{smallt} follows from Theorem \ref{theorem2}. We will show the slightly stronger statement that $L(s,\chi)$ is nonvanishing in the region
\begin{equation}\label{slightlystronger}
    \sigma \geq 1-\frac{1}{10.3 \log q+9.791 \log \log q+61.306(\log t)^{2 / 3}(\log \log t)^{1 / 3}}, \quad|t| \geq 10
\end{equation}
To see that Theorem \ref{main_thm} follows from \eqref{slightlystronger}, suppose that $t \geq e^{1944}$, since Theorem \ref{main_thm} is a consequence of \eqref{mccurley} when $t \leq e^{1944}$. If $q < e^{428}$, then $9.791\log\log q < 9.791\log(428) < 0.194 (\log t)^{2/3} (\log \log t)^{1/3}$. On the other hand, if $q \geq e^{428}$, then $9.791\log\log q < 0.13861\log q$. In either case, \eqref{smallt} follows from \eqref{slightlystronger}. 

Observe that 
if $t \geq T_0 \geq e^{1944}$, $A = 76.2$, $B=4.45$ and $\tau = 4t+1$, then 
\begin{align*}
    &1.16\log (A+1)+0.845593+0.23531 \log \Big(\frac{B}{\log \log \tau}\Big)+\Big(\frac{1.694}{B^{4 / 3}}-\frac{2.087}{B^{1 / 3}}\Big)\Big(\frac{\log \log \tau}{\log \tau}\Big)^{\frac{1}{3}} \\
    &\leq 6.238712 -0.23531\log\log\log \tau -1.037389\Big(\frac{\log\log \tau}{\log \tau} \Big)^{1/3} \leq 5.61718,
    \end{align*}
since the first derivative test shows that the middle expression above is maximized at $\tau \approx e^{24\,012.6} $ on the domain $\tau \geq e^{1944}$. Theorem \ref{theorem2} implies that for $T_0 \geq e^{1944}$,
\begin{equation}\label{m1}
    M_1 \geq \min \Big(0.055071, \frac{0.16511492}{2.9997+\frac{5.61718}{\log \log T_0}}\Big).
\end{equation}
Moreover, if $T_0 \geq e^{1944}$, then
\begin{equation}\label{qconstant}
    10.082 + \frac{1.607}{\log\log T_0} \leq 10.3.
\end{equation}
For $t \leq e^{1944}$, \eqref{slightlystronger} follows from \eqref{mccurley}. On the other hand, for $t \geq e^{1944}$, we apply Theorem \ref{theorem2} with $A = 76.2$, $B = 4.45$, and $T_0 = e^{1944}$, utilizing \eqref{m1} and \eqref{qconstant} to bound the constants in front of $(\log t)^{2/3} (\log \log t)^{1/3}$ and $\log q$ respectively.
\end{proof} 
We note that if the unpublished result \cite[(17)]{kadiri} is used in place of \cite[Theorem 1.1]{mccurley}, the constant $61.306$ in \eqref{slightlystronger} may be lowered to $59.521$.
\section{Basic Estimates for $|\zeta(s)|$ and $L(s,\chi)$}
First, we will require the following elementary estimates for $L(s,\chi)$.
\begin{lemma}\label{zetabound}
If $1<\sigma \leq 1.06$ and $t$ is real, then for any Dirichlet character $\chi\pmod{q}$,  
\[
\frac{1}{\zeta(\sigma)} \leq|L(\sigma+i t,\chi)| \leq \zeta(\sigma) \leq 0.6+\frac{1}{\sigma-1}.
\]
For all $\sigma > 1$ and real $t$, we have
\[
\Big|-\frac{L^{\prime}}{L}(\sigma+i t,\chi)\Big|<\frac{1}{\sigma-1} .
\]
\end{lemma}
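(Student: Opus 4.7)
My plan is to prove each inequality separately, working from an appropriate Dirichlet series representation valid in the half-plane $\Re(s) > 1$.

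The upper bound $|L(\sigma+it,\chi)| \leq \zeta(\sigma)$ follows immediately from applying the triangle inequality to the absolutely convergent Dirichlet series $L(s,\chi) = \sum_{n \geq 1} \chi(n)/n^s$ and using $|\chi(n)| \leq 1$. For the companion lower bound $|L(\sigma+it,\chi)| \geq 1/\zeta(\sigma)$, I would invert the Euler product: since $\mu * 1 = \delta$ under Dirichlet convolution, we have
\[
\frac{1}{L(s,\chi)} = \sum_{n=1}^\infty \frac{\mu(n)\chi(n)}{n^s}, \qquad \Re(s) > 1,
\]
and a second application of the triangle inequality yields $|1/L(\sigma+it,\chi)| \leq \zeta(\sigma)$.

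For the inequality $\zeta(\sigma) \leq 0.6 + 1/(\sigma-1)$ on $(1, 1.06]$, the key observation is that $\zeta(\sigma) - 1/(\sigma-1)$ extends to an entire function with value $\gamma_{\mathbb{Q}} = 0.5772\ldots < 0.6$ at $\sigma = 1$. Expanding via the Stieltjes series
\[
\zeta(\sigma) = \frac{1}{\sigma-1} + \sum_{n=0}^\infty \frac{(-1)^n \gamma_n}{n!}(\sigma-1)^n,
\]
and using explicit bounds on the first few Stieltjes constants (notably $|\gamma_1| < 0.073$, with higher-order terms controlled trivially on such a short interval), one obtains $\zeta(\sigma) - 1/(\sigma-1) \leq \gamma_{\mathbb{Q}} + 0.073\cdot 0.06 + (\text{negligible}) < 0.6$ uniformly on $(1, 1.06]$.

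For the logarithmic derivative bound, logarithmic differentiation of the Euler product gives
\[
-\frac{L'}{L}(s,\chi) = \sum_{n=1}^\infty \frac{\Lambda(n)\chi(n)}{n^s}, \qquad \Re(s) > 1,
\]
so the triangle inequality produces $|-L'(\sigma+it,\chi)/L(\sigma+it,\chi)| \leq -\zeta'(\sigma)/\zeta(\sigma)$. It then remains to establish the classical inequality $-\zeta'(\sigma)/\zeta(\sigma) < 1/(\sigma-1)$ for all $\sigma > 1$, which I expect to be the main obstacle. Near $\sigma = 1$, the Laurent expansion yields $-\zeta'/\zeta(\sigma) = 1/(\sigma-1) - \gamma_{\mathbb{Q}} + O(\sigma-1)$, confirming the inequality with margin $\gamma_{\mathbb{Q}}$. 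For general $\sigma > 1$, one route is to reduce to showing that $(\sigma-1)\zeta(\sigma)$ is strictly increasing on $(1,\infty)$; an alternative is to insert the Hadamard-factorization expansion of $-\zeta'(s)/\zeta(s)$ and verify that the combined contributions from the $\Gamma$-factor and the zeros provide a strictly negative correction to $1/(\sigma-1)$ on this half-line.
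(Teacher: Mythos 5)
The paper's own proof of this lemma is a one-line citation to Ford's Lemma~3.1, so there is no argument to compare against; your attempt is a genuine reconstruction. Your treatment of the first three inequalities is sound: the triangle inequality applied to $\sum\chi(n)n^{-s}$ and to $\sum\mu(n)\chi(n)n^{-s}$ handles the upper and lower bounds on $|L|$, and the Stieltjes-series estimate (with $\gamma_1\approx-0.073$ and a tiny tail over $0<\sigma-1\leq 0.06$) does give $\zeta(\sigma)-1/(\sigma-1)<0.6$ on the stated interval, since that difference is increasing there and its value at $\sigma=1.06$ is about $0.582$.

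The gap is in the final inequality $-\zeta'/\zeta(\sigma)<1/(\sigma-1)$ for \emph{all} $\sigma>1$, to which you correctly reduce via the triangle inequality, and which you yourself flag as the main obstacle. Neither of your two proposed routes actually closes it. Route (a), ``reduce to showing $(\sigma-1)\zeta(\sigma)$ is strictly increasing,'' is not a reduction: $\frac{d}{d\sigma}\log[(\sigma-1)\zeta(\sigma)]=\frac{1}{\sigma-1}+\frac{\zeta'}{\zeta}(\sigma)$, so monotonicity of $(\sigma-1)\zeta(\sigma)$ is literally the same statement. Route (b) cannot work by inspection of signs alone: writing the partial-fraction expansion
\[
-\frac{\zeta'}{\zeta}(\sigma)=\frac{1}{\sigma-1}+\frac{1}{\sigma}-b-\tfrac12\log\pi+\tfrac12\frac{\Gamma'}{\Gamma}\Big(\frac{\sigma}{2}\Big)-\sum_\rho\Big(\frac{1}{\sigma-\rho}+\frac{1}{\rho}\Big),
\]
and dropping the (positive) zero sum leaves you needing $\frac{1}{\sigma}+\frac12\frac{\Gamma'}{\Gamma}(\sigma/2)\leq \gamma/2+1-\log 2-\log\pi\approx 0.549$, which already fails around $\sigma=6$ (the left side is $\approx 0.63$) and diverges like $\tfrac12\log\sigma$ thereafter. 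So you must retain and quantitatively lower-bound the contribution of the nontrivial zeros, which requires genuine input about their location (e.g.\ the height of the first zero); the inequality cannot be obtained purely formally. As written the proposal does not establish the fourth inequality, and some explicit knowledge of $\zeta$'s zeros (or an equivalent) would need to be imported to complete it.
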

\begin{proof}
This follows immediately from \cite[Lemma 3.1]{fordzfr}.
\end{proof}
\begin{lemma}\label{lemma3.2}
Let $\chi\pmod{q}$ be a primitive Dirichlet character, with $q \geq 3$, and let $u$ be real. Then, we have
\begin{align*}
    \Big|\frac{L'}{L}(-\textstyle{\frac{1}{2}}+iu,\chi)\Big| \leq  8.21 + \log q + \frac{1}{2} \log (1+ u^2/4).
\end{align*}
\end{lemma}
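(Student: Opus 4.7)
The plan is to apply the functional equation \eqref{fe} to move the evaluation point from $s=-\tfrac12+iu$, which lies far to the left of the critical strip, into the region of absolute convergence where $L'/L$ is trivially bounded by Lemma \ref{zetabound}, and then to extract and bound the resulting gamma--factor contributions explicitly.

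Concretely, since $q\geq 3$ and $\chi$ is primitive, $\chi$ is nonprincipal, so $\delta(\chi)=0$ and the definition \eqref{completed} reduces to $\xi(s,\chi)=(q/\pi)^{(s+\mathfrak a)/2}\Gamma((s+\mathfrak a)/2)L(s,\chi)$. Taking logarithmic derivatives of \eqref{fe} and solving for $L'/L(s,\chi)$ yields the standard identity
\begin{equation*}
\frac{L'}{L}(s,\chi) = -\frac{L'}{L}(1-s,\bar\chi) - \log\!\Big(\frac{q}{\pi}\Big) - \frac{1}{2}\psi\!\Big(\frac{s+\mathfrak a}{2}\Big) - \frac{1}{2}\psi\!\Big(\frac{1-s+\mathfrak a}{2}\Big),
\end{equation*}
where $\psi=\Gamma'/\Gamma$. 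Specializing $s=-\tfrac12+iu$ gives $1-s=\tfrac32-iu$, which lies in the half-plane $\Re s>1$, so Lemma \ref{zetabound} bounds $|L'/L(3/2-iu,\bar\chi)|$ by $1/(\tfrac32-1)=2$. The term $\log(q/\pi)$ supplies exactly the $\log q$ on the right-hand side of the lemma together with a contribution of $-\log\pi$ to the absolute constant.

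The remaining task is to produce explicit bounds for
\begin{equation*}
\frac{1}{2}\Big|\psi\!\Big(\tfrac{-1/2+\mathfrak a}{2}+\tfrac{iu}{2}\Big)\Big| + \frac{1}{2}\Big|\psi\!\Big(\tfrac{3/2+\mathfrak a}{2}-\tfrac{iu}{2}\Big)\Big|,
\end{equation*}
handling separately the cases $\mathfrak a=0$ and $\mathfrak a=1$. For the term with real part $\geq \tfrac34$ I would apply an explicit Stirling-type estimate of the form $|\psi(z)|\leq \tfrac12\log(|z|^2+c)+C$, valid in the right half-plane, which produces a contribution bounded by $\tfrac14\log(1+u^2/4)$ plus a small constant. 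For the term at the point $-\tfrac14+iu/2$ (which occurs when $\mathfrak a=0$) I would first apply the reflection formula $\psi(1-z)=\psi(z)+\pi\cot\pi z$ to move the argument into the right half-plane, picking up a bounded cotangent contribution (uniformly bounded because $\Re(1-z)=5/4$ keeps the argument away from the integer poles of $\cot$), and then apply the same Stirling bound. Summing the two pieces yields $\tfrac12\log(1+u^2/4)$ plus an absolute constant, and the entire additive constant (from $-\log\pi$, the bound $2$, the reflection formula contribution, and the Stirling constants) must be shown to total at most $8.21$.

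The main obstacle is purely quantitative: one must track the explicit constants in the Stirling bound for $\psi$ and in the reflection-formula step tightly enough that the final additive constant fits under $8.21$ in both parity cases $\mathfrak a\in\{0,1\}$. Since the $\mathfrak a=0$ case requires the reflection step (which could in principle cost more than the $\mathfrak a=1$ case), I would verify it is the binding case and choose the Stirling constant accordingly; there is ample room numerically, so no sharp cancellation is needed.
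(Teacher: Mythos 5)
Your plan shares the paper's opening move exactly: apply the functional equation for $\xi(s,\chi)$ to express $\tfrac{L'}{L}(-\tfrac12+iu,\chi)$ in terms of $\tfrac{L'}{L}$ at $\tfrac32$ (where Lemma \ref{zetabound} or the Dirichlet series applies) plus the $\log(q/\pi)$ and two digamma terms. Where you diverge is in the treatment of the digamma terms. The paper pushes both evaluation points to the right using the shift $\Gamma(z+1)=z\Gamma(z)$ and then exploits the conjugate identity $\overline{\Gamma(z)}=\Gamma(\bar z)$, which folds the two digamma values $\psi(a+iy)$, $\psi(a-iy)$ into a single $\mathrm{Re}\,\psi(a+iy)$ at a point with $\Re\geq 7/4$, so Ono--Soundararajan's bound (valid for $\Re z\geq 1$) is invoked exactly once; the reciprocals produced by the shifts give contributions of magnitude $2$ and $4/3$ at $u=0$. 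You instead propose the reflection formula $\psi(1-z)=\psi(z)+\pi\cot\pi z$ for the left-half-plane term (costing $\tfrac{\pi}{2}\approx 1.57$ since $|\cot(\tfrac{5\pi}{4}-i\pi y)|\equiv 1$), and then two separate Stirling applications. Both routes close: your weaker $|\tfrac{L'}{L}(\tfrac32,\cdot)|\leq 2$ (versus the paper's $-\zeta'/\zeta(3/2)\leq 1.505$) costs you roughly $0.5$ in the constant, but the reflection step is cheaper than the paper's pair of reciprocals by more than that, and a rough tally gives you around $7.4$ at $u=0$ versus the paper's $8.21$, which is essentially sharp for its choice of estimates. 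Two small caveats: "$\Re(1-z)=5/4$" should read "$\Re(z)=5/4$" (the cotangent is evaluated at $\pi z$), and you would need to nail down the explicit Stirling constant $C$ in your stated form $|\psi(z)|\leq\tfrac12\log(|z|^2+c)+C$ --- in particular in the $\mathfrak a=1$ case it must still accommodate the evaluation at $\tfrac14+\tfrac{iu}{2}$, where $|\psi(\tfrac14)|\approx 4.2$, so $C$ cannot be small there (shifting once and then invoking Ono--Soundararajan, as the paper effectively does, is a clean way to supply such a bound without reproving Stirling from scratch).
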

\begin{proof}
The functional equation for Dirichlet $L$-functions (see \eqref{completed}, \eqref{fe}) implies that 
\begin{align*}
    \frac{1}{2}\log\Big(\frac{q}{\pi}\Big) + \frac{1}{2}\frac{\Gamma'}{\Gamma}\Big(\frac{s + \mathfrak{a}}{2}\Big) + \frac{L'}{L}(s,\chi) = -\frac{1}{2}\log\Big(\frac{q}{\pi}\Big) - \frac{1}{2}\frac{\Gamma'}{\Gamma}\Big(\frac{1-s + \mathfrak{a}}{2}\Big) - \frac{L'}{L}(1-s,\bar{\chi}),
\end{align*}
or, by rearranging,
\begin{align*}
    -\frac{L'}{L}(s,\chi) = \frac{L'}{L}(1-s,\bar{\chi})  +  \log\Big(\frac{q}{\pi}\Big)  +  \frac{1}{2}\Big(\frac{\Gamma'}{\Gamma}\Big(\frac{s + \mathfrak{a}}{2} \Big) + \frac{\Gamma'}{\Gamma}\Big(\frac{1-s + \mathfrak{a}}{2} \Big) \Big).
\end{align*}
If $s=-\frac{1}{2}+iu$, then it follows from the functional equation $\Gamma(z+1)=z\Gamma(z)$ and the identity $\overline{\Gamma(z)}=\Gamma(\bar{z})$ twice that
{\small\begin{align}
\frac{1}{2}\Big(\frac{\Gamma'}{\Gamma}\Big(\frac{s+\mathfrak{a}}{2}\Big)+\frac{\Gamma'}{\Gamma}\Big(\frac{1-s+\mathfrak{a}}{2}\Big)\Big)&=\mathrm{Re}\Big(\frac{\Gamma'}{\Gamma}\Big(\frac{3}{4}+\frac{\mathfrak{a}}{2}+\frac{iu}{2}\Big)\Big) +\frac{1}{\frac{1}{2}-\mathfrak{a}-iu} \label{firstfe}\\
&= \mathrm{Re}\Big(\frac{\Gamma'}{\Gamma}\Big(\frac{7}{4}+\frac{\mathfrak{a}}{2}+\frac{iu}{2}\Big)  - \frac{1}{\frac{3}{4} + \frac{\mathfrak{a}}{2} + \frac{iu}{2}}\Big) +\frac{1}{\frac{1}{2}-\mathfrak{a}-iu}. \label{secondfe}
\end{align}}%

Ono and Soundararajan show in \cite[Lemma 4]{sound} that if $\mathrm{Re}(z)\geq 1$, then
\[
\Big|\frac{\Gamma^{\prime}}{\Gamma}(z)\Big| \leq \frac{11}{3}+\frac{\log (1+x^2 )}{2}+\frac{\log (1+y^2 )}{2} .
\]
We use the inequality $|\log(q/\pi)| \leq \log(q)-1$ (which holds for $q \geq 3$), combined with \eqref{firstfe} if $\mathfrak{a}=1$ and \eqref{secondfe} if $\mathfrak{a}=0$ to find that
\begin{align*}
\Big|\frac{L'}{L}(-\tfrac{1}{2}+iu,\chi)\Big|&\leq \log(q) - 1 + \Big|\frac{L'}{L}\Big(\frac{3}{2}+iu,\bar{\chi}\Big)\Big| +\sqrt{\frac{4}{1+4u^2}} + \sqrt{\frac{16}{9 + 4u^2}} \\
&\qquad+ \frac{11}{3} + \frac{\log(1 + 49/16)}{2} + \frac{\log(1+u^2/4)}{2}.
\end{align*}
Since $|\frac{L'}{L}(\frac{3}{2}+iu,\bar{\chi})|\leq -\frac{\zeta'}{\zeta}(\frac{3}{2})\leq 1.505236$, the desired result follows.
\end{proof}
In standard treatments, the zeros of Dirichlet $L$-functions are counted ``symmetrically," i.e. in the region $\abs{t} \leq T$. However, we require an estimate for zeros of $L(s,\chi)$ in the \textit{upper half plane}, for which we could find no explicit estimates in the literature. Let 
\[
    N^{\pm}(T,\chi) = \#\{\rho = \beta + i\gamma: 0 < \pm \gamma < T\} + \frac{\#\{\rho = \beta+i\gamma: \gamma \in \{0,\pm T\}\}}{2}.
\]
\begin{lemma}\label{zerocount}
Let $\chi$ be a primitive Dirichlet character modulo $q \geq 3$. For all $T > 1$ and $0 < \eta \leq 1/2$,
\begin{equation*}
\Big|N^\pm(T,\chi) - \frac{T}{2\pi}\log \frac{qT}{2\pi e} - C_0(\chi,\eta)\Big| \leq C_1 \log(qT) + C_2,
\end{equation*}
where
\[
    C_1 = \frac{1 + 2\eta}{2\pi \log(2)}, \qquad C_2 = 0.1529-0.134 \eta+\frac{2 \log \zeta(1+\eta)}{\log 2}-\frac{ \log \zeta(2+2 \eta)}{\log 2} +\frac{2}{\pi} \log \zeta\Big(\frac{3}{2}+2 \eta\Big).
\]
\end{lemma}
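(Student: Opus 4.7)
The plan is to apply the argument principle to the entire function $\xi(s,\chi)$ on the rectangle $R$ with vertices $-\eta$, $1+\eta$, $1+\eta+iT$, $-\eta+iT$, traversed counterclockwise. For primitive $\chi\pmod q$ with $q\geq 3$, the character $\chi$ is non-principal, so $\xi(s,\chi)$ is entire and its zeros inside $R$ coincide with the non-trivial zeros of $L(s,\chi)$ with $0<\gamma<T$; hence $N^+(T,\chi)=\frac{1}{2\pi}\Delta_R\arg\xi(s,\chi)$, with the half-weighting of boundary zeros in the definition of $N^\pm$ matching the outcome of the usual limiting argument around boundary zeros. Writing $\xi(s,\chi)=(q/\pi)^{(s+\mathfrak{a})/2}\Gamma((s+\mathfrak{a})/2)L(s,\chi)$, I would split $\Delta_R\arg\xi$ across the three factors and four sides. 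The $(q/\pi)^{(s+\mathfrak{a})/2}$ factor is handled in closed form; Stirling's formula applied to the $\Gamma$ factor on the right vertical segment produces the main term $(T/2\pi)\log(qT/(2\pi e))$ together with an $\eta$-dependent bounded remainder that, when combined with the contribution from the bottom segment, is absorbed into the constant $C_0(\chi,\eta)$. The left vertical segment is reduced to the analogous computation for $\bar\chi$ via the functional equation $\xi(1-s,\bar\chi)=W(\chi)\xi(s,\chi)$, and the right vertical contribution of $L$ is $O(1)$ directly from the Dirichlet series at $\sigma=1+\eta$.

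The key step, responsible for both $C_1$ and $C_2$, is bounding $\Delta\arg L(s,\chi)$ along the top horizontal segment $\{\sigma+iT : -\eta\leq\sigma\leq 1+\eta\}$. I would use the classical Backlund argument: this change of argument is at most $\pi$ times one more than the number of real zeros of $\Re L(\sigma+iT,\chi)$ on the segment. These are precisely the real zeros of the entire auxiliary function
\[
g(z)=\tfrac{1}{2}\bigl(L(z+iT,\chi)+\overline{L(\bar z+iT,\chi)}\bigr),
\]
which agrees with $\Re L(\sigma+iT,\chi)$ for real $\sigma$ (one checks $\overline{L(\bar z+iT,\chi)}$ is entire in $z$ by Schwarz reflection). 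I would apply Jensen's formula to $g$ on concentric disks with outer-to-inner radius ratio equal to $2$; this yields the $1/\log 2$ factor in $C_1$, while the $1+2\eta$ factor arises from the length of the segment together with the $\pi$ from Backlund's step and the $2\pi$ from the argument principle.

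To extract $C_2$, I would upper bound $|g|$ on the outer Jensen circle by combining the Dirichlet series on the portion where $\Re z>1$ (contributing $\log\zeta(2+2\eta)$) with a Phragmén--Lindelöf-type interpolation that uses the functional equation and Stirling on the complementary portion (contributing $\log\zeta(3/2+2\eta)$). A lower bound for $|g|$ at a suitably chosen real center comes from the Euler product estimate $|L(1+\eta+iT,\chi)|\geq 1/\zeta(1+\eta)$, producing the $\log\zeta(1+\eta)$ term. The remaining numerical constants $0.1529-0.134\eta$ in $C_2$ would arise from collecting the bounded Stirling and functional-equation corrections not already absorbed into $C_0$. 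The main obstacle is not conceptual but the careful bookkeeping required to propagate these explicit constants through the Jensen, Stirling, and Phragmén--Lindelöf estimates so that the stated coefficients of $C_1$ and $C_2$ are recovered exactly.
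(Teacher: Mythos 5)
Your proposal follows essentially the same Backlund--Jensen strategy as the paper, but the paper carries it out almost entirely by citation: it quotes \cite[Theorem 14.5]{mv} for the argument-principle identity
\[
N^+(T,\chi)=\frac{1}{\pi}\arg\Gamma\Big(\frac{1}{4}+\frac{\mathfrak{a}}{2}+i\frac{T}{2}\Big)+\frac{T}{2\pi}\log\frac{q}{\pi}+S(T,\chi)-S(0,\chi),
\]
then quotes McCurley's (2.8) for the Stirling bound on the $\Gamma$-term and McCurley's (2.15) for the explicit Backlund--Jensen bound on the horizontal argument variation, so all the ``careful bookkeeping'' you flag is offloaded to the literature. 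One place your outline misattributes a constant is the term $\frac{2}{\pi}\log\zeta(\frac{3}{2}+2\eta)$ in $C_2$: in the paper this does not arise from a Phragm\'en--Lindel\"of estimate on the complementary arc of the Jensen circle, but from the trivial bound $|\arg L(s,\chi)|\le\log\zeta(\Re s)$ applied to the vertical segment of the auxiliary contour $\mathcal{C}\colon\sigma_1\to\sigma_1+iT\to\frac12+iT$, where $\sigma_1=\frac{3}{2}+2\eta$ (not $1+\eta$ as in your rectangle); the choice of $\sigma_1$ and the definition $C_0(\chi,\eta)=\frac{1}{\pi}\arg L(\sigma_1,\chi)-S(0,\chi)$ are what make the vertical piece drop out cleanly. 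Your sourcing of the $\frac{2\log\zeta(1+\eta)}{\log 2}$ and $-\frac{\log\zeta(2+2\eta)}{\log 2}$ terms (lower bound at the Jensen center via the Euler product, Dirichlet-series upper bound on the portion of the outer circle with $\Re z>1$) and of the $1/\log 2$ factor (outer-to-inner radius ratio $2$) is consistent with McCurley's derivation, though the $1+2\eta$ in $C_1$ ultimately comes from the convexity exponent at the leftmost point of the outer Jensen circle rather than directly ``from the length of the segment together with $\pi$ and $2\pi$.''
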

\begin{proof}
By interchanging $\chi$ and $\bar{\chi}$, it suffices to prove the desired estimate for $N^+(T,\chi)$. If we define
\[
S(T,\chi):= \frac{1}{2\pi}\Big[\lim_{t\to T^+}\int_{\frac{1}{2}}^{\infty}\mathrm{Im}\Big(-\frac{L'}{L}(\sigma+it,\chi)\Big)d\sigma + \lim_{t\to T^-} \int_{\frac{1}{2}}^{\infty}\mathrm{Im}\Big(-\frac{L'}{L}(\sigma+it,\chi)\Big)d\sigma\Big],
\]
then by \cite[Theorem 14.5]{mv}, we have that
\begin{equation}\label{mvzerocount}
    N(T, \chi)=\frac{1}{\pi} \arg \Big(\Gamma\Big(\frac{1}{4}+\frac{\mathfrak{a}}{2}+i \frac{T}{2}\Big)\Big)+\frac{T}{2 \pi} \log \frac{q}{\pi}+S(T, \chi)-S(0, \chi).
\end{equation}
Let $\sigma_1 = \frac{3}{2} + 2\eta > 1$, and let $C_0(\chi,\eta) =  \frac{1}{\pi}\arg(L(\sigma_1,\chi))-S(0,\chi)$. 

By \cite[(2.8)]{mccurley} and the subsequent unlabeled display, we have that for $T \geq 1$,
\begin{equation}\label{gammabound}
\begin{aligned}
\Big|\arg \Big(\Gamma\Big(\frac{1}{4}+\frac{\mathfrak{a}}{2}+i \frac{T}{2}\Big)\Big)- \frac{T}{2} \log \frac{T}{2 e} \Big|\leq 0.6909.
\end{aligned}
\end{equation}
Next, we bound $|S(T,\chi)-\frac{1}{\pi}\arg(L(\sigma_1,\chi))|$ by considering the variation of $\arg(L(s,\chi))$ along the contour $\mc{C}: \sigma_1 \to \sigma_1+iT \to 1/2 + iT$. 
 By \cite[(2.15)]{mccurley}, we have for all $T \geq 1$ that
\begin{equation}\label{horizontal}
    \Big|\Delta_{\sigma_1 + iT \to \frac{1}{2} + iT}~\arg (L(s, \chi))\Big| \leq  \frac{1+2 \eta}{2 \log 2} \log (0.74685 q T) +\frac{2 \pi \log \zeta(1+\eta)}{\log 2}-\frac{\pi \log \zeta(2+2 \eta)}{\log 2} .
\end{equation}
Moreover, since $|\arg L(s, \chi)|\leqslant| \log L(s, \chi) \mid \leqslant \log \zeta(\Re(s))$ and $\sigma_1 = \frac{3}{2} + 2\eta$, we have
\begin{equation}\label{vertical}
    |\Delta_{\sigma_1 + it_0 \to \sigma_1 + iT}~{\arg(L(s,\chi))}| \leq 2 \log \Big(\zeta\Big(\frac{3}{2}+ 2\eta \Big) \Big).
\end{equation}
Combining \eqref{gammabound}, \eqref{horizontal}, and \eqref{vertical} yields the desired result.
\end{proof}
We obtain the following corollary.
\begin{corollary}\label{corollary}
Let $\chi$ be a primitive Dirichlet character modulo $q \geq 3$. For all $T > 1$,
\begin{equation*}
\abs{N^\pm(T,\chi) - \frac{T}{2\pi}\log \frac{qT}{2\pi e} - C_0(\chi)} \leq 0.2297 \log(qT) + 24.77,
\end{equation*}
where $C_0$ is as given in Lemma \ref{zerocount}.
\end{corollary}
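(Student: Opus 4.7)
The plan is to derive Corollary \ref{corollary} as a direct numerical specialization of Lemma \ref{zerocount} by making an optimal choice of the free parameter $\eta \in (0, 1/2]$. Observe that $C_1(\eta) = (1+2\eta)/(2\pi\log 2)$ is strictly increasing in $\eta$, while $C_2(\eta)$ contains the term $(2/\log 2)\log\zeta(1+\eta)$ which blows up like $-(2/\log 2)\log\eta$ as $\eta\to 0^+$ (since $\zeta(1+\eta) \sim 1/\eta$). Hence small $\eta$ gives a small coefficient of $\log(qT)$ but a large additive constant, so we tune $\eta$ to meet the prescribed coefficient $0.2297$ and then read off the best additive constant possible at that $\eta$.

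First I would solve $(1+2\eta)/(2\pi\log 2) \leq 0.2297$ for $\eta$. Since $2\pi\log 2 \approx 4.35517$, this yields roughly $\eta \leq 1.9\times 10^{-4}$; I would fix $\eta$ at (or just below) this threshold so that $C_1 = 0.2297$ is secured. Then I would substitute this value into the expression for $C_2(\eta)$ from Lemma \ref{zerocount} and evaluate the four contributions numerically: the elementary piece $0.1529 - 0.134\eta$; the dominant piece $(2/\log 2)\log\zeta(1+\eta)$, using the Laurent expansion $\zeta(1+\eta) = \eta^{-1} + \gamma_{\mathbb{Q}} + O(\eta)$ to get a stable value near $24.7$; the negative piece $-(1/\log 2)\log\zeta(2+2\eta)$, evaluated near $\zeta(2) = \pi^2/6$; and $(2/\pi)\log\zeta(3/2 + 2\eta)$, evaluated near $\zeta(3/2)$. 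Summing these should give a value just below $24.77$, as claimed.

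Finally, I would note that the constant $C_0(\chi)$ appearing in the corollary is simply $C_0(\chi,\eta)$ from Lemma \ref{zerocount} evaluated at this fixed optimal $\eta$; since Lemma \ref{zerocount} already handles both $N^+$ and $N^-$ (via the interchange $\chi \leftrightarrow \bar\chi$ indicated in its proof), the $\pm$ in the corollary statement requires no additional work. I do not anticipate any serious obstacle: the argument is entirely a one-parameter numerical optimization followed by direct substitution. The only mild care needed is to verify, using monotonicity of $\zeta$ on $(1,\infty)$ and the Laurent expansion near $s=1$, that the numerical estimates are strict upper bounds rather than approximations, so that the stated inequality genuinely holds for all $T > 1$.
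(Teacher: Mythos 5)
Your proposal is correct and follows essentially the same route as the paper, which simply takes $\eta = 0.00019$ in Lemma \ref{zerocount}; you additionally explain how this value of $\eta$ is found (by forcing $C_1(\eta) \le 0.2297$) and verify the resulting $C_2$ numerically, which is precisely the computation the paper leaves implicit.
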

\begin{proof}
Take $\eta = 0.00019$, in Lemma \ref{zerocount}.
\end{proof}
\section{ Additional Bounds for $|\zeta(s)|$ and $|L(s,\chi)|$}
Unlike the results of the previous section, the estimates in this section will depend on the explicit exponential sum estimates derived in Ford \cite{ford}. We first extract an upper bound for $L(s,\chi)$ from \eqref{eq1.2}.
\begin{lemma}\label{appliedford}
Assume that there exist positive constants $A$ and $B$ such that \eqref{eq1.2} holds. Let $q \geq 3$, and let $\chi\pmod{q}$ be a Dirichlet character. If $|t| \geq 3$ and $1/2 \leq \sigma \leq 1$, then
\begin{align*}
    |L(\sigma+it,\chi)| \leq  A q^{1-\sigma}|t|^{B(1-\sigma)^{3/2}}(\log |t|)^{2/3} + 1.92 \frac{q^{1-\sigma}-1}{1-\sigma}.
\end{align*}
\end{lemma}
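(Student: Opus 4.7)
The plan is to express $L(s,\chi)$ as a linear combination of Hurwitz zeta functions and invoke the hypothesis \eqref{eq1.2} on each term. Grouping the Dirichlet series by residue class modulo $q$ (for $\Re(s) > 1$, then extending by analytic continuation) yields
\[
L(s,\chi) = q^{-s} \sum_{a=1}^{q} \chi(a)\, \zeta(s, a/q).
\]
I would then split $\zeta(s, a/q) = (a/q)^{-s} + \bigl(\zeta(s, a/q) - (a/q)^{-s}\bigr)$, so that the triangle inequality gives
\[
|L(\sigma+it,\chi)| \leq q^{-\sigma} \sum_{a=1}^{q} |\chi(a)| \Bigl((a/q)^{-\sigma} + \bigl|\zeta(s, a/q) - (a/q)^{-s}\bigr|\Bigr).
\]

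Each $a/q$ lies in $(0,1]$, so for $|t|\geq 3$ and $1/2\leq \sigma\leq 1$ the hypothesis \eqref{eq1.2} bounds $|\zeta(s, a/q) - (a/q)^{-s}|$ uniformly by $A |t|^{B(1-\sigma)^{3/2}} (\log |t|)^{2/3}$; the case $t<0$ reduces to $t\geq 3$ via the conjugation symmetry $\zeta(\bar s, u) = \overline{\zeta(s,u)}$ for real $u>0$. Combined with $|\chi(a)|\leq 1$ and the trivial bound $\sum_{a=1}^{q} 1 = q$, this part contributes exactly $A q^{1-\sigma} |t|^{B(1-\sigma)^{3/2}} (\log |t|)^{2/3}$, matching the first summand in the claimed inequality.

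For the remaining contribution, $q^{-\sigma} \sum_{a=1}^{q} |\chi(a)| (a/q)^{-\sigma} \leq \sum_{a=1}^{q} a^{-\sigma}$, and a standard integral comparison with the decreasing function $x^{-\sigma}$ gives $\sum_{a=1}^{q} a^{-\sigma} \leq 1 + \int_{1}^{q} x^{-\sigma}\,dx = 1 + \frac{q^{1-\sigma}-1}{1-\sigma}$. The lemma therefore reduces to verifying
\[
1 + \frac{q^{1-\sigma}-1}{1-\sigma} \leq 1.92 \cdot \frac{q^{1-\sigma}-1}{1-\sigma},
\]
equivalently $g(\sigma) := \frac{q^{1-\sigma}-1}{1-\sigma} \geq 1/0.92$. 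Writing $g(\sigma) = \int_{0}^{\log q} e^{(1-\sigma)y}\,dy$ shows $g$ is decreasing in $\sigma$, so its minimum on $[1/2,1]$ is $g(1) = \log q \geq \log 3 \approx 1.0986 > 1.087 > 1/0.92$. The main obstacle, such as it is, is exactly this numerical check: it is tight against the hypothesis $q \geq 3$, and the constant $1.92$ seems to have been chosen precisely so that $\log 3$ clears the threshold $1/0.92$.
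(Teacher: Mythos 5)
Your proposal is correct and follows essentially the same route as the paper: decompose $L(s,\chi)$ into Hurwitz zeta values by residue class, peel off $(a/q)^{-s}$, invoke \eqref{eq1.2} on the remainder, bound $\sum_{a\leq q} a^{-\sigma}$ by integral comparison, and absorb the extra $1$ into the factor $1.92$ via $\frac{q^{1-\sigma}-1}{1-\sigma}\geq\log q\geq\log 3$. The only cosmetic difference is in how that last lower bound is obtained (your monotonicity argument via $\int_0^{\log q}e^{(1-\sigma)y}\,dy$ versus the paper's direct $\int_1^q u^{-\sigma}\,du \geq \int_1^q u^{-1}\,du$), and your explicit remark on handling $t<0$ by conjugation is a welcome clarification.
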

\begin{proof}
Note by the periodicity of the values of $\chi(n)$ that
\begin{align*}
    L(\sigma+it,\chi) &= q^{-(\sigma+it)}\sum_{a=1}^q \chi(a) \zeta\Big(\sigma+it, \frac{a}{q}\Big).
\end{align*}
Hence, it follows from \eqref{eq1.2} that
\begin{align*}
    |L(\sigma+it,\chi)| &= \Big|q^{-(\sigma+it)}\sum_{a=1}^q \chi(a) \Big(\zeta\Big(\sigma+it, \frac{a}{q}\Big) - \Big(\frac{a}{q} \Big)^{-(\sigma+it)} \Big) + q^{-(\sigma+it)} \sum_{a=1}^q \Big(\frac{a}{q} \Big)^{-(\sigma+it)} \Big| \\
    &\leq A q^{1-\sigma} |t|^{B(1-\sigma)^{3 / 2}} (\log |t|)^{2/3} + \sum_{a=1}^q a^{-\sigma}.
\end{align*}
To bound the sum above, observe that
\begin{align*}
    \sum_{a=1}^q a^{-\sigma} \leq 1 + \int_1^{q} u^{-\sigma} du = 1 + \frac{u^{1-\sigma}}{1-\sigma}\Big\rvert_{u=1}^q = 1 + \frac{q^{1-\sigma}-1}{1-\sigma}.
\end{align*}
Furthermore, note that $\sigma \leq 1$ implies that $\frac{q^{1-\sigma}-1}{1-\sigma} = \int_1^q u^{-\sigma} du \geq \int_1^q \frac{du}{u} = \log q$. Since $q \geq 3$, it follows that $1 + \frac{q^{1-\sigma}-1}{1-\sigma} \leq 1.92 \frac{q^{1-\sigma}-1}{1-\sigma} $.
\end{proof}
This following lemma is a variant of \cite[Lemma 3.4]{fordzfr}.
\begin{lemma}\label{lemma3.4}
Let $q \geq 3$, and let $\chi\pmod{q}$ be a Dirichlet character. 
Fix $\sigma \in [1/2,1)$, and $a \in (0,1/2]$. Assume that there exist positive constants $A$ and $B$ such that \eqref{eq1.2} holds. Suppose that $t, q$ satisfy
\begin{align}\label{bound_condition}
     1-\sigma \geq 1.92 (\log(t/100))^{-2/3},\qquad t \geq \max\{q^{\frac{1}{100\,000}},e^{1938}\}.
\end{align}
Then, it follows that
\begin{multline}
\label{lemmaintegral}
\int_{-\infty}^{\infty} \frac{\log |L(\sigma+i t+i a u,\chi)|}{(\cosh u)^2} d u \leq 2\Big(\log(A+1) + (1-\sigma) \log q  \\
    +B(1-\sigma)^{3/2} \log t+\frac{2}{3} \log \log t\Big).
\end{multline}
\end{lemma}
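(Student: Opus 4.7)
The plan is to integrate the pointwise upper bound for $|L(\sigma+iv,\chi)|$ furnished by Lemma~\ref{appliedford} against the kernel $1/\cosh^2 u$, whose total mass over $\R$ equals $2$, and then to verify that each piece of the Ford bound integrates to exactly the corresponding term in \eqref{lemmaintegral}.

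I would begin by cleaning up Lemma~\ref{appliedford} into a single multiplicative form. The hypothesis $1-\sigma\geq 1.92(\log(t/100))^{-2/3}$ rearranges to $1/(1-\sigma)\leq (\log(t/100))^{2/3}/1.92$, so whenever $|v|\geq t/100$,
\[
\frac{1.92(q^{1-\sigma}-1)}{1-\sigma} \leq q^{1-\sigma}(\log|v|)^{2/3}.
\]
Combined with $|v|^{B(1-\sigma)^{3/2}}\geq 1$, this upgrades Lemma~\ref{appliedford} to
\[
|L(\sigma+iv,\chi)|\leq (A+1)\,q^{1-\sigma}|v|^{B(1-\sigma)^{3/2}}(\log|v|)^{2/3}.
\]

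With $v=t+au$, I would split the integration in \eqref{lemmaintegral} at $|u|=t/a$. On the central region $|u|\leq t/a$, both $t+au$ and $t-au$ are nonnegative, and a symmetrization pairing $u\leftrightarrow -u$ (using that $\cosh$ is even) gives
\[
\int_{-t/a}^{t/a}\frac{\log|t+au|}{\cosh^2 u}\,du = \int_0^{t/a}\frac{\log\bigl((t+au)(t-au)\bigr)}{\cosh^2 u}\,du \leq 2\log t,
\]
by $(t+au)(t-au)=t^2-a^2u^2\leq t^2$ and $\int_0^{\infty} du/\cosh^2 u =1$. The same pairing together with the inequality $\log\log(t+au)+\log\log(t-au)\leq 2\log\log\sqrt{t^2-a^2u^2}\leq 2\log\log t$ (obtained from AM-GM applied to $\log(t+au),\log(t-au)$, valid for $u\leq(t-1)/a$) yields $\int_{-t/a}^{t/a}\log\log|t+au|(\cosh u)^{-2}\,du\leq 2\log\log t$ up to the tiny sliver $u\in((t-1)/a,t/a]$. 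For the outer tail $|u|>t/a$, the bound $1/\cosh^2 u\leq 4e^{-2|u|}\leq 4e^{-2t/a}$ combined with Lemma~\ref{appliedford} (when $|t+au|\geq 3$) or a crude Phragm\'en--Lindel\"of convexity bound (when $|t+au|<3$) shows that the tail contributes at most $e^{-2t/a}$ times a polynomial in $\log q$ and $\log t$, which under the hypotheses $t\geq\max\{q^{1/10^5},e^{1938}\}$ is super-exponentially small.

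Assembling these pieces: the constant $\log(A+1)$ integrates to $2\log(A+1)$, the $(1-\sigma)\log q$ term to $2(1-\sigma)\log q$, and the $B(1-\sigma)^{3/2}\log|v|$ and $(2/3)\log\log|v|$ terms to at most $2B(1-\sigma)^{3/2}\log t$ and $(4/3)\log\log t$ respectively. The sum is precisely the right-hand side of \eqref{lemmaintegral}.

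I expect the main obstacle to be the careful bookkeeping of the ``tail'' errors: the sliver contribution from $u\in((t-1)/a,t/a]$, the outer tail from $|u|>t/a$, and any leftover error from the bound $A|v|^{B(1-\sigma)^{3/2}}+1 \leq (A+1)|v|^{B(1-\sigma)^{3/2}}$. Each of these is super-exponentially small in $t$ under the stated hypotheses, and taken together they can be absorbed into the slack inherent in the main pointwise bound (in particular, $(A+1)|v|^{B(1-\sigma)^{3/2}}$ strictly exceeds $A|v|^{B(1-\sigma)^{3/2}}+1$ whenever $|v|\geq 3$).
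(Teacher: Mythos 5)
Your proposal is correct in spirit and takes a genuinely different route from the paper. The paper's proof decomposes the real line into five explicit cases (based on a set $S$ where the additive term in Lemma~\ref{appliedford} can dominate, plus boundary regions where $|t+au|\leq 3$), and in the main case applies the Taylor expansion $\log(1+x)\leq x-\tfrac12 x^2+\tfrac13 x^3$ to $\log(t+au)$; the lemma is then closed by a computation verifying that the cumulative error $E$ is $\leq 0$, with the negative quadratic Taylor term $-a/(20t^2\log t)$ absorbing all positive errors. You instead observe that the even kernel $\sech^2 u$ permits a symmetrization $u\leftrightarrow -u$, so $\log(t+au)+\log(t-au)=\log(t^2-a^2u^2)\leq 2\log t$ gives the $B(1-\sigma)^{3/2}$ piece directly, and AM--GM on $\log(t\pm au)$ does the same for the $\log\log$ piece, with no Taylor expansion or sign-check of $E$ at all. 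Your upgrade of Lemma~\ref{appliedford} to the multiplicative form via the threshold $|t+au|\geq t/100$ (which the hypothesis $1-\sigma\geq 1.92(\log(t/100))^{-2/3}$ guarantees) is essentially the same move as the paper's observation that $S\subset\{u\leq -0.99t/a\}$.

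The bookkeeping in your sketch needs some tightening before it would compile into a rigorous argument. The region where the multiplicative bound fails at the reflected point $-u$ is $u\in(99t/(100a),\,t/a]$, which is larger than the sliver $((t-1)/a,\,t/a]$ where your AM--GM argument breaks down, so the latter should be enlarged; the region $|t+au|<3$ (where one needs the paper's Case~1 argument or your ``crude convexity bound'') is an interval of length $6/a$ centered at $u=-t/a$ that \emph{straddles} the boundary $|u|=t/a$, so it cannot be filed entirely under the outer tail; and the claimed absorption of these errors into the slack in $A|v|^{B(1-\sigma)^{3/2}}+1\leq(A+1)|v|^{B(1-\sigma)^{3/2}}$ needs a quantitative comparison, since your main-term estimates hit $2(\cdots)$ essentially with equality while the error regions contribute positively. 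All of these are fixable because the errant regions all have $\sech^2$-mass $\ll e^{-ct/a}$, exactly as the paper exploits, but a clean write-up would have to spell this out.
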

\begin{proof}
Note that even if $L(\sigma+it+iau,\chi) = 0$ on the path of integration, the integral in \eqref{lemmaintegral} converges, since all zeros have finite order. Let 
\[S = \Big\{u \in \R:  \frac{1.92}{1-\sigma} \geq  |t+au|^{B(1-\sigma)^{3/2}} (\log |t+au|)^{2/3} \Big\}.\] 
Note that if $u \notin S$ and $|t+au| > 3$, then
\begin{align}\label{secondcasebound}
    |L(\sigma + it + iau,\chi)| \leq  (A + 1) q^{1-\sigma}|t+au|^{B(1-\sigma)^{3/2}}(\log |t+au|)^{2/3}.  
\end{align}
To see this, observe that if $u \notin S$, then
\begin{align*}
    1.92 \frac{q^{1-\sigma}-1}{1-\sigma} \leq q^{1-\sigma} |t+au|^{B(1-\sigma)^{3/2}} (\log|t+au|)^{2/3}.
\end{align*}
Consequently, \eqref{secondcasebound} follows from Lemma \ref{appliedford} (which is applicable since $|t+au| \geq 3$).  We now split up the the integral in \eqref{lemmaintegral} into six disjoint pieces. 

{\bf Case 1: $|t+au|\leq 3$.}  Note that in this range, $|\sigma + it + iau| \leq \sqrt{10}$. For $\chi$ nonprincipal, we use
\begin{align*}
    |L(s,\chi)| = |s| \Big|\int_1^\infty \sum_{n \leq x} \chi(n)  x^{-s-1} dx\Big| \leq |s| q \int_1^\infty x^{-3/2} dx =  2q|s|
\end{align*}
with $s = \sigma + it + iau$, which implies that $\log|L(\sigma+it +iau,\chi)| \leq \log (2q|\sigma+it+iau|) \leq \log(2\sqrt{10}q )$. By the assumptions $t \geq q^{\frac{1}{100\,000}}$ and $t \geq e^{1938}$, we have that 
\begin{equation}\label{case1}
|L(\sigma+it+iau,\chi)| \leq 100\,000\log t + \log(2\sqrt{10}) \leq 100\,001 \log t.
\end{equation}
On the other hand, if $\chi$ is principal, then the same conclusion follows from the identity (see  \cite[(2.1.4)]{tit})
\begin{equation*}
    \zeta(s)=\frac{1}{s-1}+\frac{1}{2}+s \int_{1}^{\infty} \frac{\lfloor x\rfloor-x+1 / 2}{x^{s+1}} d x,
\end{equation*}
combined with \eqref{bound_condition} and the fact that
{\small\[
    |\log|L(s,\chi_0)| - \log |\zeta(s)|| \leq  \sum_{p \mid q} \log\frac{1}{|1-p^{-\sigma}|} \leq  -\log \big((1-2^{-\frac{1}{2}})(1-3^{-\frac{1}{2}})\big) -\log \big(1-5^{-\frac{1}{2}}\big)\frac{\log q}{\log 5}.
\]}%

{\bf Case 2: $u \in S$ and $|t+au| > 3$.} Observe that
\[
\frac{q^{1-\sigma}-1}{1-\sigma}\leq\begin{cases}
(e-1)\log q&\mbox{if $\sigma\geq 1-(\log q)^{-1}$,}\\
q^{1-\sigma}\log q&\mbox{if $\sigma< 1-(\log q)^{-1}$.}
\end{cases}
\]
Thus, in either case, we have that $\frac{q^{1-\sigma}-1}{1-\sigma} \leq \sqrt{q} \log q$. Consequently, by the assumptions that $q \leq t^{100\,000}$ and $t \geq e^{1938}$, we have that
\begin{equation}\label{weirdsumbound}
\begin{aligned}
   \log\Big(1.92\frac{q^{1-\sigma}-1}{1-\sigma}\Big) \leq \log(1.92) + \frac{1}{2}\log q + \log\log q \leq 50\,001 \log t.
 \end{aligned}
\end{equation}
Moreover, by \eqref{bound_condition}, the assumption $t \geq e^{1938}$, and the fact that $u \in S$, we have that
\begin{equation}\label{mainthingbound}
\begin{aligned}
    \log(A q^{1-\sigma} |t+au|^{B(1-\sigma)^{3 / 2}} (\log |t+au|)^{2/3}) &\leq \log(A) + \frac{1}{2}\log q + \log((\log t)^{2/3}) \\
    &\leq  \log(A) + 50\,000\log(t) + \frac{2}{3}\log\log t\\
    &\leq \log(A) + 50\,001\log(t).
\end{aligned}
\end{equation}
It follows from Corollary \ref{appliedford}, \eqref{weirdsumbound} and \eqref{mainthingbound} that for $u \in S$ such that $|t+au| > 3$, we have
\begin{equation}\label{case2}
\begin{aligned}
    \log |L(\sigma + it + iau, \chi )| &\leq \log\Big( A q^{1-\sigma}|t+au|^{B(1-\sigma)^{3/2}}(\log |t+au|)^{2/3} + 1.92 \frac{q^{1-\sigma}-1}{1-\sigma}\Big) \\
    &\leq \log(2) + \log(A+1) + 50\,001\log(t)\\
    &\leq \log(A+1) + 50\,002 \log(t).
\end{aligned}
\end{equation}

{\bf Case 3: $\frac{-2t}{a} \leq u < \frac{-t-3}{a}$ and $u \notin S$}. In this case, \eqref{secondcasebound} gives the inequality
\begin{equation}\label{case4}
\begin{aligned}
|L(\sigma+it+iu,\chi)| \leq (A + 1) q^{1-\sigma} t^{B(1-\sigma)^{3/2}}(\log t)^{2/3}.
\end{aligned}
\end{equation}

{\bf Case 4: $u > \frac{3-t}{a}$ and $u \notin S$}. We use the inequalities $\log(1+x) \leq x$ and $\log(1+x) \leq x - \frac{1}{2}x^2 + \frac{1}{3}x^3$, both valid for $x > -  1$, alongside \eqref{secondcasebound} to obtain that
\begin{equation}\label{case5}
\begin{aligned}
\log |L(\sigma+i t+i a u,\chi)| &\leq \log (A + 1)  + (1-\sigma) \log q \\
&\qquad+B(1-\sigma)^{3/2} \log (t+a u)+\frac{2}{3} \log \log (t+a u) \\
&\leq \log \Big((A + 1) q^{1-\sigma}t^{B(1-\sigma)^{3/2}}(\log t)^{2/3}\Big)\\
&\qquad+\Big(B(1-\sigma)^{3/2}+\frac{2}{3\log t}\Big)\Big(\frac{a u}{t}-\frac{(a u)^{2}}{2 t^{2}}+\frac{(a u)^{3}}{3 t^{3}}\Big).
\end{aligned}
\end{equation}

{\bf Case 5: $u < -\frac{2t}{a}$ and $u\notin S$.} Using $\log(1+x) \leq x$ and \eqref{secondcasebound}, we find that
\begin{equation}\label{case6}
\begin{aligned}
    \log |L(\sigma+i t+i a u,\chi)| &\leq \log \Big((A+1)q^{1-\sigma} t^{B(1-\sigma)^{3/2}}(\log t)^{2/3}\Big)\\
    &\qquad+\Big(B(1-\sigma)^{3/2}+\frac{2}{3\log t}\Big)\Big(\frac{-a u-2 t}{t}\Big).
\end{aligned}
\end{equation}

The estimates \eqref{case1} and \eqref{case2}-\eqref{case6}, along with the identity $\int_{-\infty}^\infty (\sech (x))^2  dx = 2$, yield that
\begin{align*}
    &\int_{-\infty}^{\infty} \frac{\log |L(\sigma+i t+i a u,\chi)|}{(\cosh u)^2} d u \\
    &\qquad\leq 2\Big(\log(A+1) + (1-\sigma) \log q +B(1-\sigma)^{3/2} \log t +\frac{2}{3} \log \log t\Big)+E,
\end{align*}
where
\[
\begin{aligned}
E &= 50\,002\log(t)\int_{\substack{u \in S \\ |t+au| > 3}} \frac{1}{(\cosh u)^2} du + \frac{600\,006 \log (t)}{a (\cosh(\frac{3-t}{a}))^2} \\
&+\Big(B(1-\sigma)^{3/2}+\frac{2}{3\log t}\Big)\Big(\int_{-\infty}^{-\frac{2 t}{a}} \frac{-a u-2 t}{t (\cosh u)^2} d u+\int_{\frac{3-t}{a}}^{\infty} \frac{\frac{a u}{t}-\frac{(a u)^{2}}{2 t^{2}}+\frac{(a u)^{3}}{3 t^{3}}}{(\cosh u)^2} d u\Big) .
\end{aligned}
\]
By \eqref{bound_condition}, $S$ is a subset of $\{u: u \leq -0.99 t/a\}$. 
Since (for $M \geq 0$) we have that $\int_{u \leq -M} \frac{du}{(\cosh u)^2} \leq 2 e^{-2M}$, and since $a \leq 1/2$, it follows that
\begin{align*}
     50\,002\log(t)\int_{\substack{u \in S \\ |t+au| > 3}} \frac{1}{(\cosh u)^2} d u &\leq  100\,004\log(t) e^{-1.98t/a} \\
     &\leq e^{-t/a} \cdot e^{-1.96t} \cdot 100\,004 \log(t) \leq 0.001 e^{-t/a}  .
\end{align*}
Since $a \leq 1/2$ and $t \geq e^{1938}$, we have that $a e^{(t-6) / a} \geq 2 e^{2 t-12}$. Moreover, since $ (\cosh u)^2 \geq \frac{1}{4}e^{2|u|} $, we have that $a(\cosh(\frac{3-t}{a}))^2 \geq \frac{1}{4} a e^{(t-6)/a} e^{t/a} \geq \frac{1}{2} e^{2t-12} e^{t/a}$. Consequently, we find that
\begin{align*}
    0.001 e^{-t/a} + \frac{600\,006 \log (t)}{a (\cosh(\frac{3-t}{a}))^2} &\leq 0.001 e^{-t/a} +  \frac{1\,200\,012 \log (t)}{e^{t / a+2 t-12}} \leq e^{-t/a}.
\end{align*}
The final lines of the proof of \cite[Lemma 3.4]{fordzfr} show, \textit{mutatis mutandis}, that 
\small
\begin{align*}
    \Big(B(1-\sigma)^{3/2}+\frac{2}{3\log t}\Big)\Big(\int_{-\infty}^{-\frac{2 t}{a}} \frac{-a u-2 t}{t (\cosh u)^2} d u+\int_{\frac{3-t}{a}}^{\infty} \frac{\frac{a u}{t}-\frac{(a u)^{2}}{2 t^{2}}+\frac{(a u)^{3}}{3 t^{3}}}{(\cosh u)^2} d u\Big) \leq -\frac{a}{20t^2\log t}.
\end{align*}
\normalsize
It follows that $E \leq 0$.
\end{proof}
\section{Detecting zeros of $L(s,\chi)$}
We first recall the form of Jensen's formula shown for functions in a vertical strip proven in \cite[Lemma 2.2]{fordzfr}.

\begin{lemma}[Lemma 2.2 of \cite{ford}]\label{lemma2.2}
Suppose $f$ is the quotient of two entire functions of finite order, and does not have a zero or a pole at $z=z_0$ nor at $z=0$. Then, for all $\eta>0$ except for a set of Lebesgue measure 0 (the exceptional set may depend on $f$ and $z_0$), we have
\[
\begin{aligned}
-\Re \Big(\frac{f^{\prime}(z_0)}{f(z_0)} \Big)&=\frac{\pi}{2 \eta} \sum_{|\Re(z_0-\omega)| \leq \eta} m_\omega \Re\Big( \cot \Big(\frac{\pi(\omega-z_0)}{2 \eta}\Big)\Big) \\
&+\frac{1}{4 \eta} \int_{-\infty}^{\infty} \frac{\log |f(z_0-\eta+\frac{2 \eta i u}{\pi})|-\log |f(z_0+\eta+\frac{2 \eta i u}{\pi})|}{(\cosh  u)^2} d u.
\end{aligned}
\]
where $\omega$ runs over the zeros and poles of $f$, 
and where the multiplicity $m_{\omega}$ is positive if $\omega$ is a zero or negative if $\omega$ is a pole.
\end{lemma}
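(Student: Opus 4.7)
The plan is to derive the identity via the residue theorem, applied to a carefully chosen meromorphic kernel on a long rectangle inside the vertical strip $|\Re(z-z_0)|\leq\eta$. The kernel is designed so that its residues produce both the cotangent sum and the value $(f'/f)(z_0)$ itself, while its boundary behaviour on the two vertical sides, after an integration by parts, yields the integral with the $\cosh^{-2} u$ weight.

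Fix $T>0$ large and let $\Gamma_T$ denote the counterclockwise rectangle with vertices $z_0\pm\eta\pm iT$. I consider
\[
g(z) \;=\; \frac{f'(z)}{f(z)}\cdot\frac{i\pi}{2\eta}\cot\!\left(\frac{\pi(z-z_0)}{2\eta}\right).
\]
Inside the strip, the cotangent factor has a single simple pole at $z_0$ of residue $2\eta/\pi$, while $f'/f$ has simple poles with residue $m_\omega$ at each zero or pole $\omega$ of $f$. Thus $g$ is meromorphic inside $\Gamma_T$, with a simple pole at $z_0$ of residue $i(f'/f)(z_0)$ and, at each $\omega\neq z_0$ inside $\Gamma_T$, a simple pole of residue $m_\omega\cdot(i\pi/(2\eta))\cot(\pi(\omega-z_0)/(2\eta))$. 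The residue theorem therefore gives
\[
\oint_{\Gamma_T} g(z)\,dz \;=\; -2\pi\,\frac{f'(z_0)}{f(z_0)}\;-\;\frac{\pi^2}{\eta}\sum_{\omega} m_\omega\cot\!\left(\frac{\pi(\omega-z_0)}{2\eta}\right),
\]
where the sum ranges over zeros and poles $\omega\neq z_0$ of $f$ in the interior of $\Gamma_T$.

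To analyse the boundary, I take real parts. On the two vertical sides, the identity $\cot(\pm\pi/2+i\pi y/(2\eta))=-i\tanh(\pi y/(2\eta))$ reduces the cotangent kernel to the real quantity $(\pi/(2\eta))\tanh(\pi y/(2\eta))$ on both segments. Combining the two vertical contributions with their opposite orientations, invoking the Cauchy--Riemann identity $\partial_y\log|f|=-\Im(f'/f)$, and integrating by parts in $y$, the real part of the combined vertical contribution equals
\[
-\left(\frac{\pi}{2\eta}\right)^{2}\int_{-T}^{T}\Bigl[\log\!\bigl|f(z_0+\eta+iy)\bigr|-\log\!\bigl|f(z_0-\eta+iy)\bigr|\Bigr]\operatorname{sech}^2\!\left(\frac{\pi y}{2\eta}\right)dy
\]
plus boundary terms at $y=\pm T$. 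Substituting $u=\pi y/(2\eta)$, equating with the real part of the residue identity, and dividing through by $-2\pi$ then recovers precisely the formula of the lemma.

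The main obstacle is to justify that, as $T\to\infty$, both the horizontal contributions to $\oint_{\Gamma_T}$ and the boundary terms arising from the integration by parts vanish. The finite-order hypothesis on $f$ is essential here: applying the Jensen formula together with the Borel--Carath\'eodory inequality to the numerator and denominator of $f$ separately gives $\log|f(z)|=O(|z|^{C})$ outside arbitrarily small discs around the zeros and poles, while $\cot(\pi(x+iy)/(2\eta))\to\mp i$ exponentially in $|y|$ uniformly for $x\in[-\eta,\eta]$. A standard averaging argument, choosing a sequence $T_k\to\infty$ along which no zero or pole of $f$ lies within unit distance of the height $\pm T_k$, ensures that both the horizontal contributions and the $y=\pm T_k$ boundary terms vanish along the sequence. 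Finally, the exceptional set of $\eta$ consists of those values for which a zero or pole of $f$ lies on one of the vertical lines $\Re z=\Re z_0\pm\eta$; as $f$ has at most countably many zeros and poles, this set is countable and thus of Lebesgue measure zero.
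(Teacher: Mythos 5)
The paper does not prove this lemma; it is quoted verbatim from Ford's paper and cited there, so there is no in-paper argument to compare against. Your contour/residue strategy is the natural one, and most of the computation checks out: the kernel $\frac{i\pi}{2\eta}\cot\big(\frac{\pi(z-z_0)}{2\eta}\big)\frac{f'}{f}(z)$ has the stated residues, the identity $\cot(\pm\tfrac{\pi}{2}+iv)=-i\tanh v$ converts the vertical sides into a $\tanh$-weighted integral of $\Im(f'/f)$, integration by parts via $\partial_y\log|f|=-\Im(f'/f)$ produces the $(\cosh u)^{-2}$ weight, and the substitution $u=\pi y/(2\eta)$ with a final division by $-2\pi$ gives exactly the claimed formula. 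The measure-zero exceptional set of $\eta$ is also correctly identified.

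There is, however, a genuine gap in the claim that ``both the horizontal contributions and the $y=\pm T_k$ boundary terms vanish along the sequence.'' They do not vanish individually; they cancel. On the top edge one has $\frac{i\pi}{2\eta}\cot\big(\frac{\pi(x+iT)}{2\eta}\big)=\frac{\pi}{2\eta}+O(e^{-\pi T/\eta})$, a nonzero limit, so the real part of the top-edge integral is, up to an exponentially small error, $-\frac{\pi}{2\eta}\int_{-\eta}^{\eta}\partial_x\log|f(z_0+x+iT)|\,dx=-\frac{\pi}{2\eta}\bigl(\log|f(z_0+\eta+iT)|-\log|f(z_0-\eta+iT)|\bigr)$, which does not tend to zero (already $\log|\zeta(\tfrac12+iT)|$ fails to tend to zero). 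The integration-by-parts boundary term at $y=T$ is $\frac{\pi}{2\eta}\tanh\big(\frac{\pi T}{2\eta}\big)\bigl(\log|f(z_0+\eta+iT)|-\log|f(z_0-\eta+iT)|\bigr)$, which likewise does not tend to zero, but the \emph{sum} of the two is $O\bigl(e^{-\pi T/\eta}(\sup|f'/f|+\sup|\log|f||)\bigr)$, and this does go to zero along your averaged sequence by the finite-order bounds; the same cancellation occurs at $y=-T$. The overall plan is sound, but as written the justification for passing to the limit is incorrect and needs to be replaced by this cancellation argument.
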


The following lemma is our main method for detecting zeros of $L(s,\chi)$.
\begin{lemma}\label{lemma4.1}
Assume that there exist positive constants $A$ and $B$ such that \eqref{eq1.2} holds. Let $q \geq 3$, and let $\chi\pmod{q}$ be a Dirichlet character. Let $t \geq \max\{e^{1938}, q^{\frac{1}{100\,000}}\}$, $0 < \eta  < 10$, $\sigma-\eta > 1/2$, $1 \leq \sigma \leq 1 + \eta - 1.92(\log(t/100))^{-2/3}$, and $s = \sigma + it$. If $S\subseteq\{z\in\mathbb{C}\colon\sigma -\eta \leq \Re(z) \leq 1\}$, 
then
\small
\[
\begin{aligned}
-\Re\Big(\frac{L^{\prime}(s,\chi)}{L(s,\chi)}\Big) &\leq - \sum_{\rho \in S, L(\rho,\chi)=0} \Re \Big(\frac{\pi}{2 \eta} \cot \Big(\frac{\pi(s-\rho)}{2 \eta}\Big)\Big) \\
&\quad+\frac{1}{2 \eta}\Big((1-\sigma+\eta)\log q +  \frac{2}{3} \log \log t+B(1-\sigma+\eta)^{3 / 2} \log t+\log(A+1)\Big) \\
&\quad-\frac{1}{4 \eta} \int_{-\infty}^{\infty} \frac{\log |L(s+\eta+2 \eta i u / \pi,\chi)|}{(\cosh u)^2} d u + (1-\mathfrak{a})(1-\delta(\chi))e^{-1937}.
\end{aligned}
\]
\normalsize
\end{lemma}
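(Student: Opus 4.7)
The plan is to apply Lemma \ref{lemma2.2} to $f(z)=L(z,\chi)$ at $z_0=s$ with parameter $\eta$, then use a sign analysis on the Jensen cotangent sum to drop zeros outside $S$ and invoke Lemma \ref{lemma3.4} to bound the integral. The hypothesis $f(z_0)\neq 0$ is immediate from $\sigma\geq 1$ and the nonvanishing of Dirichlet $L$-functions on $\Re(z)\geq 1$. The hypothesis $f(0)\neq 0$ may fail: for $\chi$ even, $L(0,\chi)=0$, and for $\chi$ principal there is additionally a pole at $z=1$. In each such case one applies Jensen instead to an adjusted function (for $\chi$ even non-principal, to $L(z,\chi)/z^{k}$ with $k=\mathrm{ord}_{z=0}L(z,\chi)$; for $\chi$ principal, to $L(z,\chi)(z-1)/z^{\omega(q)}$) and tracks the resulting corrections to the log-derivative and integrand. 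Every correction is either non-positive (so it can be dropped for an upper bound) or bounded by $|s|^{-1}\leq t^{-1}\leq e^{-1938}$; together they contribute at most the stated error $(1-\mathfrak{a})(1-\delta(\chi))e^{-1937}$.

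Using $\cot(-z)=-\cot(z)$, Jensen's identity rearranges to
\[
-\Re\frac{L'(s,\chi)}{L(s,\chi)}=-\sum_{\omega}\Re\Bigl(\frac{\pi m_\omega}{2\eta}\cot\frac{\pi(s-\omega)}{2\eta}\Bigr)+\frac{1}{4\eta}\int_{-\infty}^\infty\frac{\log|L(s-\eta+\tfrac{2\eta iu}{\pi},\chi)|-\log|L(s+\eta+\tfrac{2\eta iu}{\pi},\chi)|}{(\cosh u)^2}du+\mathcal{E},
\]
where $\mathcal{E}$ denotes the negligible corrections discussed above and $\omega$ ranges over zeros of $L$ in the strip $|\Re(s-\omega)|\leq\eta$. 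Every such zero satisfies $0<\Re(\omega)<1$, so the hypotheses $\sigma\geq 1$ and $\sigma-\eta>1/2$ force $\sigma-\Re(\omega)\in(0,\eta]$. From
\[
\Re\cot(x+iy)=\frac{\sin x\cos x}{\sin^2 x+\sinh^2 y}
\]
with $x=\pi(\sigma-\Re\omega)/(2\eta)\in(0,\pi/2]$, one concludes $\Re\cot(\pi(s-\omega)/(2\eta))\geq 0$ for every zero $\omega$ in the strip, so each corresponding term in the cotangent sum is $\leq 0$. Dropping the zeros outside $S$ therefore only increases the right-hand side, leaving the desired expression $-\sum_{\rho\in S,\,L(\rho,\chi)=0}\Re(\tfrac{\pi}{2\eta}\cot(\tfrac{\pi(s-\rho)}{2\eta}))$.

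The $\log|L(s+\eta+\cdot,\chi)|$ portion of the integral is retained verbatim in the statement. For $\log|L(s-\eta+\cdot,\chi)|$, we apply Lemma \ref{lemma3.4} with its parameter $\sigma$ replaced by $\sigma-\eta$ and $a=2\eta/\pi$. The required condition $1-(\sigma-\eta)\geq 1.92(\log(t/100))^{-2/3}$ is exactly the assumed upper bound on $\sigma$, and the conditions on $t,q$ transfer unchanged. Lemma \ref{lemma3.4} then yields
\[
\int_{-\infty}^\infty\frac{\log|L(s-\eta+\tfrac{2\eta iu}{\pi},\chi)|}{(\cosh u)^2}du\leq 2\bigl(\log(A+1)+(1-\sigma+\eta)\log q+B(1-\sigma+\eta)^{3/2}\log t+\tfrac{2}{3}\log\log t\bigr),
\]
and division by $4\eta$ produces the $\frac{1}{2\eta}(\cdots)$ main term of the statement.

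The main obstacle is the bookkeeping around the hypothesis $f(0)\neq 0$ of Lemma \ref{lemma2.2}: one must verify, case by case in the parity and principality of $\chi$, that the corrections arising from the necessary modification of $L$ either have the right sign to preserve the upper bound or are bounded by the stated error $(1-\mathfrak{a})(1-\delta(\chi))e^{-1937}$, and in the principal case one must also confirm that the residual pole contribution $\Re\cot(\pi(s-1)/(2\eta))=O(e^{-\pi t/\eta})$ is utterly absorbed. A secondary technicality is that $a=2\eta/\pi$ may exceed the nominal range $(0,\tfrac{1}{2}]$ for which Lemma \ref{lemma3.4} is stated; however, the exponential decay of $(\cosh u)^{-2}$ ensures that its proof carries over to the relevant $\eta$-regime without substantive change.
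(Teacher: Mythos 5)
Your proposal is correct and follows essentially the same route as the paper: apply Lemma~\ref{lemma2.2} (Jensen's formula) at $z_0=s$ to a suitably modified copy of $L(z,\chi)$, use positivity of $\Re\cot$ on the relevant half-strip to discard zeros outside $S$, and invoke Lemma~\ref{lemma3.4} with $a=2\eta/\pi$ to bound the left-hand integral, with $(1-\mathfrak{a})(1-\delta(\chi))e^{-1937}$ absorbing the negligible correction from the modification. The paper takes $f=s^{(1-\delta(\chi))(\mathfrak{a}-1)}L(s,\chi)$ and defers the sign analysis and the $\mathfrak{a}=1$ or $\delta(\chi)=1$ cases to Ford's proof, while you treat the order of vanishing at $z=0$ and the $a$-range condition of Lemma~\ref{lemma3.4} more explicitly; both technicalities you flag are real but are harmless, being absorbed into the same exponentially small error.
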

\begin{proof}
Apply Lemma \ref{lemma2.2} with $f = s^{(1-\delta(\chi))(\mathfrak{a}-1)}L(s,\chi)$ and $z_0 = s$. The conclusion follows precisely as in the proof of \cite[Lemma 4.1]{fordzfr} when $\mathfrak{a} = 1$ or $\delta(\chi) = 1$, with Lemma \ref{lemma3.4} in place of \cite[Lemma 3.4]{fordzfr}. When $\delta(\chi) = 0$ and $\mathfrak{a} = 0$, we have that
\begin{align*}
    &\frac{1}{4 \eta} \int_{-\infty}^{\infty} \frac{\log |(s-\eta + \frac{2\eta iu}{\pi})^{-1} L(s-\eta+\frac{2 \eta i u}{\pi},\chi)|-\log |(s + \eta + \frac{2\eta i u}{\pi})^{-1} L(s+\eta+\frac{2 \eta i u}{\pi},\chi )|}{(\cosh u)^2} d u \\
    &= \frac{1}{4 \eta} \int_{-\infty}^{\infty} \frac{\log |L(s-\eta+\frac{2 \eta i u}{\pi},\chi)|-\log |L(s+\eta+\frac{2 \eta i u}{\pi},\chi )|}{(\cosh u)^2} d u \\
    &\qquad+ \frac{1}{4\eta} \int_{-\infty}^\infty \log \Big|\frac{s+\eta+\frac{2\eta iu}{\pi}}{s - \eta + \frac{2\eta i u}{\pi}}\Big| \frac{du}{(\cosh u)^2}.
\end{align*}
Moreover, we have that
\begin{align*}
    \frac{1}{4\eta} \int_{-\infty}^\infty \log \Big|\frac{s+\eta+\frac{2\eta iu}{\pi}}{s - \eta + \frac{2\eta i u}{\pi}}\Big| \frac{du}{(\cosh u)^2} &= \frac{1}{4\eta} \int_{-\infty}^\infty \log \Big|1 + \frac{2 \eta}{s - \eta + \frac{2\eta iu}{\pi}} \Big| \frac{du}{(\cosh u)^2} \\
    &\leq \frac{1}{2} \int_{-\infty}^\infty \frac{1}{\abs{s - \eta + \frac{2\eta iu}{\pi}}} \frac{du}{(\cosh u)^2}.
\end{align*}
Finally, we crudely estimate the latter integral. Utilizing the facts that $t \geq e^{1938}$, $\eta \leq 10$, $\sigma-\eta > 1/2$, and that for $M \geq 0$, $\int_{u \leq -M}\frac{du}{(\cosh u)^2} \leq 2e^{-2M}$, we obtain
\begin{align*}
    &\int_{-\infty}^\infty \frac{1}{\abs{s - \eta + \frac{2\eta iu}{\pi}}} \frac{du}{(\cosh u)^2} \\
    &=\int_{\abs{t + \frac{2\eta u}{\pi}}< 0.5t} \frac{1}{\abs{s - \eta + \frac{2\eta iu}{\pi}}} \frac{du}{(\cosh u)^2} + \int_{\abs{t + \frac{2\eta u}{\pi}} \geq 0.5t} \frac{1}{\abs{s - \eta + \frac{2\eta iu}{\pi}}} \frac{du}{(\cosh u)^2} \\
    &< 4 e^{-\frac{\pi t}{40}} + 4/t \leq 2 e^{-1937},
\end{align*}
which completes the proof.
\end{proof}
In our proof, we require strong upper bounds for the number of zeros of $L(s,\chi)$ near the point $1+it$. Let $N_\chi(t,R)$ denote the number of zeros $\rho$ of $L(s,\chi)$ satisfying $|1+it-\rho| \leq R$. 
\begin{lemma}\label{lemma4.2}
Assume that there exist positive constants $A$ and $B$ such that \eqref{eq1.2} holds. Let $q \geq 3$, and let $\chi\pmod{q}$ be a Dirichlet character. If $1.04 (\log(t/100))^{-2/3} \leq R \leq 1/4$  and $t \geq \max\{e^{1938},q^{\frac{1}{100\,000}}\}$, then we have
\begin{align*}
    N_\chi(t,R) \leq 1.3478 R^{3 / 2} B \log t+0.49+\frac{\log (A+1)-\log R+1.8579R\log q + \frac{2}{3} \log \log t}{1.879}.
\end{align*}
\end{lemma}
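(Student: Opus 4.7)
The plan is to prove Lemma~\ref{lemma4.2} via the classical Jensen's formula approach on a suitably chosen disk. Set $s_0 = 1 + \delta + it$ for a parameter $\delta > 0$ to be optimized (of size proportional to $R$, which is what will produce the $-\log R$ term in the final bound), and apply Jensen's formula to $L(s, \chi)$ on the disk $|s - s_0| \leq R_1$ for a suitable $R_1 > R + \delta$. (The only pole of $L(s, \chi)$, at $s = 1$, is at distance $\sqrt{\delta^2 + t^2} \gg 1$ from $s_0$, so it lies far outside the Jensen disk; no clearing factor is needed.) Every zero $\rho$ of $L(s, \chi)$ with $|\rho - (1+it)| \leq R$ satisfies $|\rho - s_0| \leq R + \delta < R_1$, and hence contributes at least $\log(R_1/(R+\delta))$ to the Jensen sum, giving
\[
N_\chi(t, R) \log \frac{R_1}{R + \delta} \leq \frac{1}{2\pi}\int_0^{2\pi}\log|L(s_0 + R_1 e^{i\theta}, \chi)|\,d\theta - \log|L(s_0, \chi)|.
\]

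For the lower bound on $|L(s_0, \chi)|$, Lemma~\ref{zetabound} yields $|L(s_0, \chi)| \geq 1/\zeta(1 + \delta) \geq \delta/(1 + 0.6\delta)$, so that $-\log|L(s_0, \chi)| \leq \log(0.6 + 1/\delta)$; with $\delta$ proportional to $R$ this contributes $-\log R$ plus an absolute constant. For the boundary integral, I would split the circle $|s - s_0| = R_1$ into three arcs according to whether $\Re(s) \geq 1$, $1/2 \leq \Re(s) < 1$, or $\Re(s) < 1/2$. On the first, $|L(s,\chi)| \leq \zeta(\sigma)$ by Lemma~\ref{zetabound}. On the second, Lemma~\ref{appliedford} gives the pointwise bound $\log|L(\sigma+iu,\chi)| \leq \log(A+1) + (1-\sigma)\log q + B(1-\sigma)^{3/2}\log|u| + \tfrac{2}{3}\log\log|u|$. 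On the third, use the functional equation \eqref{fe} combined with Stirling's formula and Lemma~\ref{appliedford} applied to $L(1-s, \bar\chi)$. Writing $\sigma = 1 + \delta + R_1\cos\theta$ and integrating over the appropriate arcs produces contributions proportional to $\log(A+1)$, $R_1\log q$, $R_1^{3/2}B\log t$, and $\log\log t$; the numerical constants arise from elementary integrals of the form $\int_{\pi/2}^{3\pi/2}|\cos\theta|^k\,d\theta$ for $k\in\{1,3/2\}$, the latter being a Beta-function value $2B(5/4, 1/2)$.

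Finally, I would numerically optimize the parameters $\delta$ and $R_1$ to produce the stated constants $1.3478$, $0.49$, $1.8579$, and $1.879$, the last of which equals $\log(R_1/(R+\delta))$ in the balanced configuration. The main obstacle is the optimization itself: one wishes to make the Jensen denominator $\log(R_1/(R+\delta))$ as large as possible, but for $R$ close to the upper endpoint $1/4$ the constraint $R_1 > R + \delta$ forces $R_1$ to dip below $1$, while the constraint that Lemma~\ref{appliedford} apply on the boundary would like $R_1 \leq 1/2 + \delta$. Reconciling these requires either genuinely using the functional-equation bound on the $\sigma < 1/2$ arc (adding controlled $R_1 \log q$ and $R_1 \log t$ contributions) or a careful piecewise choice of parameters. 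A minor secondary subtlety is the integrability of $\log\zeta(\sigma)$ near $\sigma = 1$ on the right-hand arc, which requires separating a small neighbourhood of $\theta = 0$ before bounding.
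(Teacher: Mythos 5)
Your proposal takes a genuinely different route from the paper, and it has a gap that the paper's approach is specifically designed to avoid. The paper does \emph{not} apply classical Jensen's formula on a disk. Instead, it applies Lemma~\ref{lemma4.1}, which is built on Ford's strip version of Jensen's formula (Lemma~\ref{lemma2.2}): the zero sum is weighted by $\Re\big(\frac{\pi}{2\eta}\cot(\frac{\pi(s-\rho)}{2\eta})\big)$ and the boundary integral runs over the two vertical lines $\Re(z)=\sigma\pm\eta$ with a $\mathrm{sech}^2$ weight. Choosing $s = 1 + 0.6421R + it$ and $\eta = 2.5R$, the left boundary sits at $\Re = 1 - 1.8579R \geq 1 - 1.8579/4 > 1/2$, so the Hurwitz-zeta bound (Lemma~\ref{appliedford}) applies everywhere and one never needs the functional equation or a $\Re(s) < 1/2$ arc. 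The constants then fall out directly: $1.8579 = 2.5 - 0.6421$ is the coefficient $1-\sigma+\eta$ on $\log q$, and $1.879 = 5 \times 0.3758$ comes from the numerical lower bound $\Re\big(\frac{\pi}{5}\cot(\frac{\pi z}{5})\big) \geq 0.3758$ on the relevant region (a verification imported from Ford); these are not logarithms of radius ratios.

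This matters for your plan because the obstacle you flag at the end is fatal rather than cosmetic. You want $\log(R_1/(R+\delta)) = 1.879$, hence $R_1 \approx 6.55(R+\delta)$; keeping $R_1 \leq 1/2 + \delta$ then forces $R \lesssim 0.08$, far short of the stated upper endpoint $R = 1/4$. For $R$ near $1/4$ your outer radius is $R_1 \approx 1.6$ and a substantial portion of the Jensen circle lies in $\Re(s) < 1/2$, where the functional equation contributes $(1/2-\sigma)\log(q|t|)$. Averaging that over the arc produces $\log q$ and $\log t$ terms whose coefficients scale linearly in $R_1$, not like $R^{3/2}B$ or $R$ with the small numerical constants of the lemma; a "careful piecewise choice of parameters'' will not make these terms vanish. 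So the proposal in its current form cannot reproduce the statement as given. If you want a disk-based argument to work here, you would essentially have to rediscover the conformal map from the strip to the disk underlying Ford's Lemma~\ref{lemma2.2} — at which point you are doing what the paper does via Lemma~\ref{lemma4.1}.
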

\begin{proof}
Apply Lemma \ref{lemma4.1} with $s = 1+ 0.6421R + it$, $\eta = 2.5R$, and
\[
S = \{z \in \C: |1+it-z| \leq R,~\Re(z) \leq 1\}.
\]
(Note that with these choices, $\sigma - \eta \geq 1/2$.) By Lemma \ref{zetabound},
\begin{equation}\label{eq4.1}
    \begin{aligned}
        \Big|\frac{L^{\prime}}{L}(s+\eta+i v,\chi)\Big| & \leq \frac{1}{3.1421 R }, \\
        |L(s+\eta+i v,\chi)|^{-1} & \leq \zeta(1+3.1421 R) \leq 0.6+\frac{1}{3.1421 R} .
    \end{aligned}
\end{equation}
By \cite[(4.2)]{ford}, in the region $U=\{z: \Re(z) \geq 0.6421,|z-0.6421| \leq 1\}$, we have that
\begin{equation}\label{eq4.2}
    \Re \Big(\frac{\pi}{5} \cot \Big(\frac{\pi z}{5}\Big)\Big) \geq 0.3758.
\end{equation}
Thus, it follows from \eqref{eq4.1}, \eqref{eq4.2}, and Lemma \ref{lemma4.1} that
\begin{align*}
-\frac{1}{3.1421 R} &\leq-0.3758 \frac{N_\chi(t,R)}{R}+\frac{1}{5 R}\Big(1.8579R\log q + \frac{2}{3} \log \log t+ (1.8579 R)^{3 / 2} B \log t \\
&\qquad+\log(A+1) + \log \Big(0.6+\frac{1}{3.1421 R}\Big)\Big) + (1-\mathfrak{a})(1-\delta(\chi)) e^{-1937}.
\end{align*}
By the argument concluding the proof of \cite[Lemma 4.2]{ford}, we have that $\log(0.6 + \frac{1}{3.1421}R) \leq -\log R - 0.6735$, which completes the proof. 
\end{proof}
\begin{lemma}\label{lemma4.3}
Let $q \geq 3$, and let $\chi\pmod{q}$ be a Dirichlet character. If $t \geq \max\{e^{1938}, q^{\frac{1}{100\,000}}\}$ and $1.04 (\log(t/100))^{-2/3} < v \leq 1/4$, then
{\small
\begin{align*}
\sum_{L(\rho,\chi) = 0 \atop |1+i t-\rho| > v} \frac{1}{|1+i t-\rho|^{2}} &\leq (8.14467+5.3912 B(v^{-1 / 2}-2)) \log t-8.5 \log (A+1) \\
&+518.7+\frac{1}{v^{2}}\Big(\frac{\log(A+1)-\log v+\frac{2}{3} \log \log t}{1.879}+0.224\Big)-\frac{N_\chi(t,v)}{v^{2}} \\
&+ (\log q)(1.978 v^{-1}+0.23267).
\end{align*}
}%
\end{lemma}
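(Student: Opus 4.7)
The plan is to recast the sum as an integral against the zero-counting function via Abel summation, then bound $N_\chi(t,r)$ separately on the near range $r \in [v, 1/4]$ using Lemma \ref{lemma4.2} and on the far range $r \geq 1/4$ using the Riemann--von Mangoldt-type estimate in Corollary \ref{corollary}.

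First, an Abel summation (integration by parts against the counting measure of the nontrivial zeros) rewrites
\[
\sum_{\substack{L(\rho,\chi)=0 \\ |1+it-\rho| > v}} \frac{1}{|1+it-\rho|^2} = 2\int_v^\infty \frac{N_\chi(t,r) - N_\chi(t,v)}{r^3}\,dr,
\]
with the boundary term at infinity vanishing because Lemma \ref{zerocount} gives $N_\chi(t,R) = O(R\log(qR))$. I would split this at $r = 1/4$ into a near piece $\int_v^{1/4}$ and a far piece $\int_{1/4}^\infty$.

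For the near piece I would substitute the pointwise bound from Lemma \ref{lemma4.2}. Each resulting monomial-times-log integrand admits an elementary antiderivative: $\int_v^{1/4} r^{3/2}\cdot 2r^{-3}\,dr = 4(v^{-1/2}-2)$ produces the $5.3912 B(v^{-1/2}-2)\log t$ coefficient; $\int_v^{1/4} 2r^{-3}\,dr = v^{-2}-16$, multiplied by the constant, $(2/3)\log\log t$, and $\log(A+1)$ pieces of Lemma \ref{lemma4.2}, supplies the $v^{-2}$ bracket content together with the $-8.5\log(A+1)$ term through its $-16$ residue; $\int_v^{1/4} r\cdot 2r^{-3}\,dr = 2(v^{-1}-4)$ delivers the $1.978\,v^{-1}\log q$ coefficient; and $\int_v^{1/4}(-\log r)\cdot 2r^{-3}\,dr$, computed by integration by parts, produces the $-\log v/(1.879 v^2)$ inside the bracket alongside the numerical piece that combines with $0.49$ to give the $0.224$.

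For the far piece I would pass to the primitive character inducing $\chi$ (this does not change the nontrivial zeros) and bound $N_\chi(t,r) \leq N^+(t+r,\chi) - N^+(\max\{t-r,0\},\chi)$ when $r \leq t$, with an analogous inequality involving $N^-(r-t,\chi)$ when $r > t$. Replacing each $N^\pm(T,\chi)$ by its main term $(T/2\pi)\log(qT/(2\pi e)) + C_0(\chi,\eta)$ plus the error $0.2297\log(qT) + 24.77$ via Corollary \ref{corollary}, the character-dependent $C_0$ cancels in the difference, and the remaining elementary integrals $\int_{1/4}^\infty r^{-2}\log(qr)\,dr$ and $\int_{1/4}^\infty r^{-3}\,dr$ assemble into the $8.14467\log t + 0.23267\log q + 518.7$ portion of the statement; moreover, the constant contribution $-16\,N_\chi(t,v)$ here exactly cancels the $+16\,N_\chi(t,v)$ residue left over from the near piece, leaving the stated $-N_\chi(t,v)/v^2$. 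The main obstacle is numerical bookkeeping: matching each integrated contribution to the precise constants in the statement requires careful tracking of signs and the use of the hypothesis $t \geq \max\{e^{1938}, q^{1/100000}\}$ to ensure that the tails arising from $\log(q(t+r)) - \log(qt)$ and from the $r > t$ regime yield $O((\log(qt))/t)$ contributions that absorb harmlessly into the absolute constant.
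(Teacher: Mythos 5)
Your Abel summation identity and the treatment of the near range $r \in [v,1/4]$ are both correct, and the cancellation of the $\pm 16\,N_\chi(t,v)$ residues is exactly right. The near integral against Lemma \ref{lemma4.2} reproduces the paper's contribution from its set $Z_3$ (the zeros with $v < |1+it-\rho| < 1/4$). The problem is the far piece $r \geq 1/4$: bounding it the way you describe does \emph{not} assemble into the stated $8.14467\log t + 0.23267\log q + 518.7$ --- it overshoots by a wide margin, and you assert the assembly without justification.

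Concretely, bounding $N_\chi(t,r)$ for $r \geq 1/4$ by $N^+(t+r,\chi)-N^+(t-r,\chi)$ and applying Corollary \ref{corollary} gives, up to lower-order terms, $N_\chi(t,r) \leq \frac{r}{\pi}\log\frac{qt}{2\pi e} + 2\bigl(0.2297\log(qt)+24.77\bigr)$; the error term here is essentially independent of $r$. Feeding this into $2\int_{1/4}^\infty r^{-3}N_\chi(t,r)\,dr$ multiplies that flat error by $2\int_{1/4}^\infty r^{-3}\,dr = 16$, so the error contributes about $32\cdot(0.2297\log(qt)+24.77)\approx 7.35\log(qt)+793$, and the main term about $\tfrac{8}{\pi}\log(qt)\approx 2.55\log(qt)$. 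That is roughly $9.9\log(qt)+790$ for the far piece alone, whereas the statement requires the far contribution to be about $8.14\log t + 0.23\log q + 519$ (the $\log q$ coefficient is off by more than an order of magnitude). The paper avoids this loss by \emph{not} expressing the far zeros through $N_\chi(t,\cdot)$: for $Z_1 = \{|\Im\rho-t|\geq 1\}$ it weights by the vertical distance $\frac{1}{(u-t)^2}$ directly and integrates against $dN^{\pm}$, so the error-term integral is $\int_1^\infty x^{-3}dx = \tfrac12$ rather than $\int_{1/4}^\infty r^{-3}dr = 8$; and for the intermediate set $Z_2$ (zeros with $|\Im\rho - t|<1$ and $|1+it-\rho|\geq 1/4$, $|it-\rho|\geq 1/4$) it uses the functional-equation pairing $\rho \leftrightarrow 1-\bar\rho$ to show each pair contributes at most $\tfrac{160}{9}$, i.e.\ $\tfrac{80}{9}$ per zero --- a strictly better per-zero constant than the $16$ your approach effectively pays. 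Without reproducing something like that pairing argument and the vertical-distance weighting, the far piece cannot be brought down to the constants in the lemma, so as written your proof does not establish the statement.
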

\begin{proof}
Assume without loss of generality that $\chi$ is primitive. We also assume without loss of generality that $t \pm 1$ is not the ordinate of a zero. (If $\chi$ is imprimitive and $\chi'$ induces $\chi$, then $\chi'$ has the same nontrivial zeros as $\chi$ and smaller modulus. If $t \pm 1$ is the ordinate of a zero, apply the result to a sequence with $t'$ tending to $t$ and $v'$ tending to $v$ from above.) By \cite[Lemma 4.3]{fordzfr}, we may also assume that $\chi$ is nonprincipal. We partition the zeros with $|1+it-\rho| > v$ into the sets 
\[
\begin{aligned}
Z_{0} &=\{\rho:|\Im(\rho)| \leq 1\}, \\
Z_{1} &=\{\rho \notin Z_0 :|\Im(\rho)-t| \geq 1\}, \\
Z_{2} &=\Big\{\rho \notin Z_0 \cup Z_{1}:|1+i t-\rho| \geq \frac{1}{4} \text { and }|i t-\rho| \geq \frac{1}{4}\Big\}, \\
Z_{3} &=\{\rho: \rho \notin Z_0 \cup Z_1 \cup Z_2 \text { and }|1+i t-\rho| > v\} .
\end{aligned}
\]
For $k\in\{0,1,2,3\}$, we define
\begin{equation*}
    S_k = \sum_{\rho \in Z_k} \frac{1}{|1+it-\rho|^2}.
\end{equation*}
First, note that by \cite[Theorem 2.1]{mccurley} with $\eta = 0.5$, the number of zeros of $L(s,\chi)$ with $t \leq 1$ is at most $1.919\log q + 2.54$. Since $q \leq t^{100\,000}$ and $t \geq e^{1938}$, it follows that $|S_0| \leq e^{-1800}$.

Let $Q^\pm(T) = N^\pm(T,\chi) - \frac{T}{2\pi}\log \frac{qT}{2\pi e} - C_0(\chi)$. By Corollary \ref{corollary}, we have that
\begin{align*}
    |Q^\pm(T)| \leq 0.2297\log(qT) + 24.77.
\end{align*}
Write $C_1 = 0.2297$ and $C_2 = 24.77$. Observe that
\begin{align*}
   S_{1} \leq \int_{t+1}^{\infty} \frac{d N^+(u)}{(u-t)^{2}}+\int_{1}^{t-1} \frac{d N^+(u)}{(t-u)^{2}}+\int_{1}^{\infty} \frac{d N^-(u)}{(u+t)^{2}}=I_{1}+I_{2}+I_{3} .
\end{align*}
Since $dN^{\pm}(u) = \frac{1}{2\pi}\log q + \frac{1}{2\pi}\log(\frac{u}{2\pi}) + dQ^{\pm}(u)$ and $\log(t+x) \leq \log(t) + \frac{x}{t}$, we find that
\[
\begin{aligned}
I_{1} & \leq   \frac{1}{2 \pi} \int_{1}^{\infty} \frac{\log (t+x) + \log q -\log 2 \pi}{x^{2}} d x+|Q^+(t+1)|+2 \int_{1}^{\infty} \frac{|Q^+(t+x)|}{x^{3}} d x \\
&=\frac{\big(1+\frac{1}{t}\big) \log (1+t) + \log q -\log (2 \pi)}{2 \pi}+|Q^+(t+1)|+2 \int_{1}^{\infty} \frac{|Q^+(t+x)|}{x^{3}} d x \\
& \leq (2C_1 + 0.15916)\log (qt) +(2C_2-0.292507)+2 \int_{1}^{\infty} \frac{C_2 x / t}{x^{3}} d x \\
& \leq (2C_1 + 0.15916)\log(qt)+ (2C_2-0.2925) .
\end{aligned}
\]
Similarly, we find
\[
\begin{aligned}
I_{2} & \leq \frac{1}{2 \pi} \log \Big(\frac{qt}{2 \pi}\Big)+ \frac{|Q^+(1)|}{(t-1)^2} + 2 \max _{1 \leq u \leq t-1}|Q^+(u)| \\
& \leq (2C_1 + 0.159155)\log (qt)+(2C_2 - 0.292507),
\end{aligned}
\]
and since $q \leq t^{100\,000}$ and $t \geq e^{1938}$,
\[
I_{3} \leq \frac{1}{2 \pi} \int_{1}^{\infty} \frac{\log \big(\frac{u+t}{2 \pi}\big) + \log q}{(u+t)^{2}} d u+ \frac{\abs{Q^-(1)}}{(1+t)^2} +2 \int_{1}^{\infty} \frac{|Q^-(u)|}{(u+t)^{3}} d u \leq 0.000001 .
\]
Consequently, $S_1$ is upper bounded by
\begin{equation}\label{eq4.4}
    \begin{aligned}
    |S_1| &\leq (4C_1 + 0.31831)\log (qt) + (4C_2-0.58) \\
    &= 1.23711\log (qt)+ 98.5.
    \end{aligned}
\end{equation}

It follows from the proof of \cite[Lemma 4.3]{fordzfr}, \textit{mutatis mutandis}, that
\begin{align*}
S_2 \leq \frac{80N_2}{9},\qquad S_3 \leq \frac{80 N_{3}}{9}+\Big(\frac{80}{9}-\frac{1}{v^{2}}\Big) N_\chi(t,v)+2 \int_{v}^{1 / 4} \frac{N_\chi(t,u)}{u^{3}} d u.
\end{align*}
Furthermore, since $t \geq e^{1938}$, we have
\begin{equation}\label{eq4.5}
\begin{aligned}
N_{2}+N_{3} &=N^+(t+1,\chi)-N^+(t-1,\chi)-N_\chi(t,v) \\
& \leq (2C_1 + 0.31831)\log (qt) +(2C_2-0.585)-N_\chi(t,v) \\
&\leq 0.7771\log (qt)+ 49 - N_\chi(t,v).
\end{aligned}
\end{equation}
Lemma \ref{lemma4.2} implies that
\[
\begin{aligned}
&2 \int_{v}^{1 / 4} \frac{N_\chi(t,u)}{u^{3}} d u \leq\Big(\frac{\log(A+1) + \frac{2}{3} \log \log t}{1.879}+0.49\Big)(v^{-2}-16) \\
&\quad+5.3912 B(v^{-1 / 2}-2) \log t+\frac{1}{1.879}\Big(8-16 \log 4-\frac{1+2 \log v}{2 v^{2}}\Big) + 1.978(\log q)(v^{-1} -4).
\end{aligned}
\]
By \eqref{eq4.5}, it therefore follows that
\[
\begin{aligned}
S_{2}+S_{3} \leq &(6.907556+5.3912 B(v^{-1 / 2}-2)) \log t-8.5 \log (A+1) \\
&+420.2+\frac{1}{v^{2}}\Big(\frac{\log(A+1)-\log v+\frac{2}{3} \log \log t}{1.879}+0.224\Big)-\frac{N_\chi(t,v)}{v^{2}} \\
&+ (\log q)(1.978 v^{-1}-1.00444).
\end{aligned}
\]
Combining this with the upper bound for $S_1$ in \eqref{eq4.4} completes the proof.
\end{proof}
The next two lemmata correspond to \cite[Lemmata 4.5 and 4.6]{ford}, which in turn are based on \cite[Lemmata 5.1 and 5.2]{hb}.
\begin{lemma}\label{lemma4.5}
Let $0<\eta\leq \frac{3}{2}$.  Suppose that $f$ is a non-negative real function which has continuous derivative on $(0,\infty)$. Suppose that the Laplace transform $F(z)= \int_0^\infty f(y) e^{-zy} dy$ is absolutely convergent for $\Re(z) > 0$. Let $F_0(z) = F(z) - f(0)/z$, and suppose that there exists a positive constant $D$ possibly depending on $f$ such
\begin{align}\label{eq4.6}
 \mathrm{Re}(z)\geq 0,\qquad |z|\geq \eta,\qquad |F_0(z)| \leq \frac{D}{|z|^2}.
\end{align}
For $\Re(s) > 1$, $\Im(s) \geq 0$, and $\chi\pmod{q}$ a Dirichlet character with $q \geq 3$, we define
\[
K_\chi(s) :=\sum_{n=1}^{\infty} \Lambda(n) \chi(n) n^{-s} f(\log n),
\]
and $Q(q) = \log\log q + 0.66$.  We have that
\begin{align*}
&\Big|K_{\chi}(s)-\Big(-f(0) \frac{L^{\prime}(s,\chi)}{L(s,\chi)}-\sum_{L(\rho,\chi) = 0} F_{0}(s-\rho) - (1-\mathfrak{a}) (1-\delta(\chi)) F_0(s) +\delta(\chi)F_{0}(s-1)\Big)\Big|\\
&\leq D\Big(3.44 + \frac{1}{3}\log q + \frac{1}{3}\log(1 + \Im(s)) + \ms{I}(\chi)(f(0) + \max_{x\in (0,\infty)} f(x))Q(q)\Big) .
\end{align*}
\end{lemma}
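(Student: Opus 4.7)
The plan is to express $K_\chi(s) + f(0)\frac{L'}{L}(s,\chi)$ as a vertical-line contour integral of $F_0(s-w)\frac{L'}{L}(w,\chi)$ in the half-plane $\Re(w) > 1$, then shift the contour leftward past the critical strip to pick up the zeros and poles and reduce the problem to bounding a residual integral along $\Re(w) = -\tfrac{1}{2}$. Concretely, writing $f(\log n) = f(0) + [f(\log n) - f(0)]$ and applying Laplace inversion $f(y) - f(0) = \frac{1}{2\pi i}\int_{(c)} F_0(z) e^{yz}\,dz$ for $c > 0$ (valid by the decay hypothesis on $F_0$), I interchange summation with the integral using $\Re(s) > 1$ together with $|F_0(z)| \leq D/|z|^2$ for $|z| \geq \eta$, so that after the substitution $w = s - z$,
\[
K_\chi(s) + f(0)\frac{L'}{L}(s,\chi) = -\frac{1}{2\pi i}\int_{(\sigma_0)} F_0(s-w)\frac{L'}{L}(w,\chi)\,dw, \qquad 1 < \sigma_0 < \Re(s).
\]

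I then shift the contour from $\Re(w) = \sigma_0$ down to $\Re(w) = -\tfrac{1}{2}$. Vanishing of the horizontal pieces as $|\Im(w)| \to \infty$ follows from the $F_0$ decay combined with standard polynomial-growth bounds for $\frac{L'}{L}$ in vertical strips (derivable from the Hadamard product of $\xi(s,\chi)$ and the functional equation). The residue theorem contributes $m_\rho F_0(s-\rho)$ at each nontrivial zero $\rho$ of $L(\cdot,\chi)$ in the critical strip, together with $-F_0(s-1)$ at the pole $w=1$ when $\chi$ is principal and $F_0(s)$ at the trivial zero $w=0$ when $\chi$ is even nonprincipal (i.e. $\mathfrak{a}(\chi) = \delta(\chi) = 0$). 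After rearrangement these exactly reproduce the main term $-f(0)\frac{L'}{L}(s,\chi) - \sum_\rho F_0(s-\rho) - (1-\mathfrak{a})(1-\delta) F_0(s) + \delta F_0(s-1)$ in the statement, leaving
\[
K_\chi(s) - \mathrm{Main}(s) = -\frac{1}{2\pi i}\int_{(-1/2)} F_0(s-w)\frac{L'}{L}(w,\chi)\,dw.
\]

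For primitive $\chi$, I bound this residual integral by combining $|F_0(s-w)| \leq D/|s-w|^2$ (valid since $|s-w| \geq \Re(s)+\tfrac12 \geq \tfrac32 \geq \eta$) with Lemma~\ref{lemma3.2}, which yields $|\frac{L'}{L}(-\tfrac{1}{2}+iu,\chi)| \leq 8.21 + \log q + \tfrac12\log(1+u^2/4)$. After the change of variable $v = u - \Im(s)$, the integrand becomes a Lorentzian kernel $1/((\Re(s)+\tfrac12)^2 + v^2)$ convolved against the above logarithmic bound. The identity $\int dv/(\alpha^2 + v^2) = \pi/\alpha$ with $\alpha \geq \tfrac{3}{2}$ handles the constant and $\log q$ contributions cleanly, while the $\log(1+(v+\Im(s))^2/4)$ piece is controlled via $\log(1+x^2) \leq 2\log(1+|x|)$ and $\log(1+|v+\Im(s)|) \leq \log(1+|v|) + \log(1+\Im(s))$, splitting off a term proportional to $\log(1+\Im(s))$ plus a bounded numerical integral, yielding the claimed $D\bigl(3.44 + \tfrac13\log q + \tfrac13\log(1+\Im(s))\bigr)$.

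For imprimitive $\chi$ induced by the primitive character $\chi'$ of conductor $q' \mid q$, I apply the primitive case to $\chi'$ (noting that $\chi$ and $\chi'$ share the same nontrivial zeros and have the same parity and principal-status, so the main-term sums and parity factors match) and then transfer the bound via the two identities
\[
K_\chi(s) - K_{\chi'}(s) = -\sum_{p\mid q,\,p\nmid q'}\sum_{k\geq 1}\chi'(p^k)(\log p)\,p^{-ks}f(k\log p),
\]
\[
\frac{L'}{L}(s,\chi) - \frac{L'}{L}(s,\chi') = -\sum_{p\mid q,\,p\nmid q'}\frac{\log p}{p^s-1},
\]
so that the discrepancy $(K_\chi - \mathrm{Main}_\chi) - (K_{\chi'} - \mathrm{Main}_{\chi'})$ reduces to an extra-prime sum whose typical summand is bounded by $(\max_x f(x) + f(0)) \cdot (\log p)/(p-1)$. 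The Mertens-type estimate $\sum_{p\mid q}\log p/(p-1) \leq Q(q)$ then reproduces the $\ms{I}(\chi)(f(0)+\max_x f(x))Q(q)$ correction. The main obstacles I anticipate are (i) the numerical optimization extracting the precise coefficient $\tfrac13$ and the constant $3.44$ from the logarithmic kernel evaluation in the primitive case, and (ii) verifying the Mertens-type bound with the explicit constant $0.66$ appearing in $Q(q)$, which requires standard but delicate estimates for sums over the prime divisors of $q$.
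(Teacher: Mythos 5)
Your proposal follows essentially the same approach as the paper: both express $K_\chi(s)+f(0)\frac{L'}{L}(s,\chi)$ as a vertical-line integral of $F_0(s-w)\frac{L'}{L}(w,\chi)$, shift to $\Re(w)=-\tfrac12$ to pick up the residues, bound the residual integral via Lemma~\ref{lemma3.2}, and handle imprimitive characters by comparison with the inducing primitive character plus the Mertens-type bound of Corollary~\ref{last}. There is one small gap: your reduction to the primitive character $\chi'$ quietly assumes the primitive case for all $\chi'$, but the primitive bound you derive relies on Lemma~\ref{lemma3.2}, which requires $q'\ge 3$. When $\chi$ is principal (mod $q$), the inducing character has conductor $q'=1$, and this case needs a separate argument --- the paper handles it by invoking Ford's Lemma~4.5 for $\zeta$ directly. (Conductor $q'=2$ never occurs for a primitive character, so $q'=1$ is the only exceptional case.)
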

\begin{proof}
We follow the proof of \cite[Lemma 4.5]{fordzfr}. Suppose that $\chi$ is primitive and that $q \geq 3$. Suppose $s = \sigma + it$ and $1 < \alpha < \sigma$. Define 
\begin{align*}
    I=\frac{1}{2 \pi i} \int_{\alpha-i \infty}^{\alpha+i \infty}-\frac{L^{\prime}(w,\chi)}{L(w,\chi)} F_{0}(s-w) d w.
\end{align*}
Since $-L'(w,\chi)/L(w,\chi) = \sum_n \chi(n) \Lambda(n) n^{-s}$, the sum converging uniformly on $\Re(w) = \alpha$, we may integrate term by term. Thus, with
\begin{align*}
    J_{n}=\frac{1}{2 \pi i} \int_{\alpha-i \infty}^{\alpha+i \infty} n^{-w} F_{0}(s-w) d w=\frac{n^{-s}}{2 \pi i} \int_{\sigma-\alpha-i \infty}^{\sigma-\alpha+i \infty} n^{u} F_{0}(u) d u,
\end{align*}
we have $I = \sum_n \Lambda(n) \chi(n) J_n$ where $J_n = n^{-s}(f(\log n) - f(0))$ via the proof of \cite[Lemma 4.5]{fordzfr}. Consequently, we have
\begin{equation*}
    I = K_\chi(s) + f(0) \frac{L'(s,\chi)}{L(s,\chi)}.
\end{equation*}
Moving the line of integration to $\Re(w) = -1/2$, we have
\begin{equation}\label{eq4.8}
    I=\frac{1}{2 \pi i} \int_{-1 / 2-i \infty}^{-1 / 2+i \infty}-\frac{L^{\prime}(w,\chi)}{L(w,\chi)} F_{0}(s-w) d w-\sum_{\rho} F_{0}(s-\rho) - (1-\mathfrak{a})(1-\delta(\chi)) F_0(s) + \delta(\chi)F_{0}(s-1).
\end{equation}
By \eqref{eq4.6}, \eqref{eq4.8} and Lemma \ref{lemma3.2}, the integral in \eqref{eq4.8} is bounded above by $\frac{D}{2\pi}I'$, where 
\[
\begin{aligned}
I^{\prime} & \leq \int_{-\infty}^{\infty} \frac{\log q + 8.21+\frac{1}{2} \log (1+(u/2)^2)}{9 / 4+(u-t)^{2}} d u \\
&= \frac{2\pi}{3}\log q +  5.48\pi +\frac{1}{3} \int_{-\infty}^{\infty} \frac{\log (1+(t/2 +3v/4)^{2})}{1+v^{2}} d v \\
& \leq \frac{2\pi}{3}\log q +  5.48\pi+\frac{1}{3} \int_{-\infty}^{\infty} \frac{\log (1+t^{2})+\log (1+v^2)}{1+v^{2}} d v \\
& \leq 2\pi\Big(\frac{1}{3}\log q + 2.74 + \log(2) +\frac{\log (1+t)}{3}\Big)  .
\end{aligned}
\]
This proves that when $\chi$ is primitive and $q \geq 3$,
\begin{align*}
    K_\chi(s) &=-f(0) \frac{L^{\prime}(s,\chi)}{L(s,\chi)}-\sum_{L(\rho,\chi) = 0} F_{0}(s-\rho) - (1-\mathfrak{a}) (1-\delta(\chi)) F_0(s) +\delta(\chi) F_{0}(s-1)\\
    &\quad+D\Big(3.44 + \frac{1}{3}\log q + \frac{1}{3}\log(1 + \Im(s))\Big).
\end{align*}
The same conclusion follows when $\chi$ is primitive and $q = 1$ by \cite[Lemma 4.5]{fordzfr}.

Now, let $\chi$ be possibly imprimitive with $q \geq 3$, and let $\chi'$ be the character inducing $\chi$. By Corollary \ref{last}, we have that
\begin{align*}
    |K_\chi(s)-K_{\chi'}(s)| &\leq \sum_{n \in \N \atop (n,q) > 1}     f(\log n) \frac{\Lambda(n)}{n} \leq  Q(q) \max_{x \in (0,\infty)} f(x),\\
     \Big|f(0)\frac{L'}{L}(s,\chi) - f(0)\frac{L'}{L}(s,\chi') \Big| &\leq f(0) \sum_{n\in \N \atop (n,q) > 1} \frac{\Lambda(n)}{n} \leq Q(q) f(0)  .
\end{align*}
Consequently, the desired result holds for all Dirichlet characters with $q \geq 3$.
\end{proof}
\begin{corollary}\label{lemma4.6}
Assume that there exist positive constants $A$ and $B$ such that \eqref{eq1.2} holds. Let $\chi\pmod{q}$ be a Dirichlet character with $q \geq 3$. Suppose that $1.92(\log(t/100))^{-2/3} < \eta < \frac{1}{2}$. Let $f$ be a compactly supported function satisfying \eqref{eq4.6} for some positive constant $D$. Suppose $s = 1+it$ with $t \geq \max\{q^{\frac{1}{100\,000}},e^{1938}\}$. Then, we have
\small
\[
\begin{aligned}
\Re(K_\chi(s)) &\leq-\sum_{L(\rho,\chi) = 0 \atop |1+i t-\rho| \leq \eta} \Re\Big\{F(s-\rho)+f(0)\Big(\frac{\pi}{2 \eta} \cot \Big(\frac{\pi(s-\rho)}{2 \eta}\Big)-\frac{1}{s-\rho}\Big)\Big\} \\
&\quad+\frac{f(0)}{2 \eta}\Big[\frac{2 \log \log t}{3}+B \eta^{3 / 2} \log t+\log (A+1) -\frac{1}{2} \int_{-\infty}^{\infty} \frac{\log \big|L\big(s+\eta+\frac{2 \eta u i}{\pi},\chi\big)\big|}{(\cosh u)^2} d u\Big] \\
&\quad+D\Big(3.5+ \frac{\log q}{3} + \frac{\log t}{3}+\sum_{L(\rho,\chi) = 0 \atop |1+i t-\rho| > \eta} \frac{1}{|1+i t-\rho|^{2}}\Big) \\
&\quad+ \ms{I}(\chi)(f(0) +\max_{x \in (0,\infty)} f(x)) Q(q) + \frac{f(0)}{2}\log q +f(0)(1-\delta(\chi))(1-\mathfrak{a})e^{-1937}.
\end{aligned}
\]
\normalsize
Moreover, $K_{\chi_0}(1) \leq F(0) + 1.8D$. 
\end{corollary}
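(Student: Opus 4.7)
The plan is to combine Lemma \ref{lemma4.5} with Lemma \ref{lemma4.1}, both applied at $s=1+it$ with the same $\eta$. From Lemma \ref{lemma4.5} (extended by continuity from $\Re(s)>1$ to $\Re(s)=1$, valid since $L(1+it,\chi)\neq 0$ and $s\neq 1$) I obtain
\[
K_\chi(s) = -f(0)\frac{L'}{L}(s,\chi) - \sum_\rho F_0(s-\rho) - (1-\mathfrak{a})(1-\delta(\chi))F_0(s) + \delta(\chi)F_0(s-1) + R,
\]
with $|R|$ bounded as in Lemma \ref{lemma4.5}. After taking real parts, I split the zero sum at the threshold $|1+it-\rho|=\eta$. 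For distant zeros, \eqref{eq4.6} gives $|F_0(s-\rho)|\leq D/|1+it-\rho|^2$, producing the displayed $D\sum_{|1+it-\rho|>\eta}|1+it-\rho|^{-2}$ error. For nearby zeros, I unpack $F_0(s-\rho)=F(s-\rho)-f(0)/(s-\rho)$, keeping $-\Re F(s-\rho)$ toward the first displayed sum and reserving $+f(0)\Re\{1/(s-\rho)\}$ for the next step.

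Next I apply Lemma \ref{lemma4.1} at $\sigma=1$ with the same $\eta$; the hypotheses $\sigma-\eta>1/2$ and $\sigma\leq 1+\eta-1.92(\log(t/100))^{-2/3}$ follow from the assumptions on $\eta$. Multiplying its conclusion by $f(0)$ and substituting into the above, the cotangent sum combines with the reserved $+f(0)\Re\{1/(s-\rho)\}$ and the $-\Re F(s-\rho)$ contributions to yield the first displayed sum of the corollary. The $\eta\log q$ factor in Lemma \ref{lemma4.1}, divided by $2\eta$, becomes the standalone $\frac{f(0)}{2}\log q$; the remaining pieces of the main term together with the integral give the bracket $\frac{f(0)}{2\eta}[\frac{2}{3}\log\log t + B\eta^{3/2}\log t + \log(A+1) - \frac{1}{2}\int\cdots]$. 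The boundary term $(1-\mathfrak{a})(1-\delta(\chi))e^{-1937}$ of Lemma \ref{lemma4.1}, times $f(0)$, is the last term in the conclusion. The residual error absorbs $|F_0(s)|,|F_0(s-1)|\leq D/t^2$ and the slack $\log(1+t)-\log t$, all of which are exponentially small for $t\geq e^{1938}$ and fit inside the bump from $3.44$ to $3.5$ in the leading constant.

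For the auxiliary estimate $K_{\chi_0}(1)\leq F(0)+1.8D$, I first reduce to the trivial character via $K_{\chi_0}(1)\leq\sum_n\Lambda(n)n^{-1}f(\log n)$ (valid since $f\geq 0$), then apply the $q=1$ analogue of Lemma \ref{lemma4.5} at $s=1+\epsilon$ and let $\epsilon\to 0^+$: the poles of $-f(0)\zeta'/\zeta(s)$ and of $F_0(s-1)$ at $s=1$ have residues $+f(0)$ and $-f(0)$ and cancel, leaving $F(0)-f(0)\gamma_{\mathbb{Q}}-\sum_\rho F_0(1-\rho)+R$. Bounding $|\sum_\rho F_0(1-\rho)|\leq D\sum_\rho|1-\rho|^{-2}$ over the nontrivial zeros of $\zeta$ (each with $|1-\rho|\geq\sqrt{1/4+14.13^2}$) and combining with the (smaller) $q=1$ Lemma \ref{lemma4.5} error constant yields the stated $1.8D$. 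The main obstacle is the purely arithmetic bookkeeping of these error constants — in the first part ensuring every slack term fits within the gap $3.5-3.44$, and in the second part finding the precise numerical balance between the $q=1$ error, the $f(0)\gamma_{\mathbb{Q}}$ gain, and the explicit value of $\sum_\rho|1-\rho|^{-2}$.
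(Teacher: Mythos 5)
Your proof of the main inequality follows exactly the paper's route: take Lemma~\ref{lemma4.5} for $\Re(s)>1$, split the zero sum at radius $\eta$, decompose $F_0 = F - f(0)/z$ on the nearby zeros, feed in the Lemma~\ref{lemma4.1} upper bound for $-\Re(L'/L)$ (with $S=\{z\colon \Re(z)\le 1,\ |1+it-z|\le\eta\}$) scaled by $f(0)$, and then let $\sigma\to 1^+$. The identification of the combined cotangent/$F/1/(s-\rho)$ terms with the first displayed sum, the extraction of $\tfrac{f(0)}{2}\log q$ from the $(1-\sigma+\eta)\log q$ piece at $\sigma=1$, and the absorption of $\Re(F_0(s)), \Re(F_0(s-1)) = O(D/t)$ and $\log(1+t)-\log t$ into the slack between $3.44$ and $3.5$ are all correct and match the paper's argument.

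For the auxiliary estimate $K_{\chi_0}(1)\le F(0)+1.8D$, the paper simply invokes \cite[Lemma 4.6]{fordzfr}, whereas you sketch a first-principles derivation via $K_{\chi_0}(1)\le K(1)$ (the $\zeta$-case), residue cancellation at $s=1$ between $-f(0)\zeta'/\zeta$ and $F_0(s-1)$, and a bound on $\sum_\rho|1-\rho|^{-2}$. The structure of your sketch is plausible, but the step you flag as ``arithmetic bookkeeping'' is in fact the whole content: with $f(0)=0$ (allowed under the stated hypotheses) the $-f(0)\gamma_{\mathbb{Q}}$ term gives no help, so you would need a $q=1$ analogue of Lemma~\ref{lemma4.5} whose error constant, added to $\sum_\rho|1-\rho|^{-2}$, is at most $1.8$. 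The paper's own Lemma~\ref{lemma4.5} (with its constant $3.44$) is restricted to $q\ge 3$ and cannot be reused directly; you would have to extract and verify Ford's sharper $\zeta$-specific constant, which you have not done. Since the paper does not reprove this estimate either, this is only a minor incompleteness, but your sketch cannot be accepted as a proof of the $1.8D$ bound without that explicit computation.
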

\begin{proof}
The final assertion of the lemma follows immediately from \cite[Lemma 4.6]{fordzfr}. Assume $\sigma > 1$. Since $t \geq e^{1938}$, $\Re(F_0(s-1)) \leq \frac{D}{t} \leq 0.0001D$, and similarly $-\Re(F_0(s)) \leq 0.0001D$ by \eqref{eq4.6}. Furthermore, by \eqref{eq4.6},
\begin{align*}
    \sum_{L(\rho,\chi) = 0 \atop |1+i t-\rho|>\eta}|F_{0}(s-\rho)| \leq D \sum_{L(\rho,\chi)= 0 \atop |1+i t-\rho|>\eta} \frac{1}{|1+i t-\rho|^{2}}.
\end{align*}
By combining Lemma \ref{lemma4.1} (with $S = \{z: \Re(z) \leq 1,~|1+it-z| \leq \eta\}$) and Lemma \ref{lemma4.5}, we obtain the inequality  
\small
\[
\begin{aligned}
\Re(K_\chi(s)) &\leq-\sum_{L(\rho,\chi) = 0 \atop |1+i t-\rho| \leq \eta} \Re\Big\{F(s-\rho)+f(0)\Big(\frac{\pi}{2 \eta} \cot \Big(\frac{\pi(s-\rho)}{2 \eta}\Big)-\frac{1}{s-\rho}\Big)\Big\} \\
&\quad+\frac{f(0)}{2 \eta}\Big[\frac{2 \log \log t}{3}+B \eta^{3 / 2} \log t+\log (A+1) -\frac{1}{2} \int_{-\infty}^{\infty} \frac{\log \big|L\big(s+\eta+\frac{2 \eta u i}{\pi},\chi\big)\big|}{(\cosh u)^2} d u\Big] \\
&\quad+D\Big(3.5+ \frac{\log q}{3} + \frac{\log t}{3}+ \sum_{L(\rho,\chi) = 0 \atop |1+i t-\rho| > \eta} \frac{1}{|1+i t-\rho|^{2}}\Big) \\
&\quad+ \ms{I}(\chi)(f(0) +\max_{x \in (0,\infty)} f(x)) Q(q) + \frac{f(0)}{2}\log q +f(0)(1-\delta(\chi))(1-\mathfrak{a})e^{-1937}.
\end{aligned}
\]
\normalsize
As in \cite[(4.10)]{ford}, 
we may take $\sigma \to 1^+$, which proves the lemma.
\end{proof}
\section{A Trigonometric Inequality}
We use an inequality that plays the same role as the inequality $3 + 4\cos\theta + \cos(2\theta) \geq 0$ in Mertens' treatment of the proof of the prime number theorem due to Hadamard and de la Vall\'ee Poussin. For any real numbers $a_1,a_2$, there exist real numbers $b_0,\dots , b_4$ such that
\begin{equation}\label{eq5.1}
    \sum_{j=0}^{4} b_{j} \cos (j \theta)=8(a_{1}+\cos \theta)^{2}(a_{2}+\cos \theta)^{2} \geq 0 \quad(\theta \in \mathbb{R}).
\end{equation}
In particular, we have that
\begin{equation}\label{eq5.2}
\begin{aligned}
&b_{4}=1, \quad b_{3}=4(a_{1}+a_{2}), \quad b_{2}=4(1+a_{1}^{2}+a_{2}^{2}+4 a_{1} a_{2}) \\
&b_{1}=(a_{1}+a_{2})(12+16 a_{1} a_{2}), \quad b_{0}=b_{2}-1+8(a_{1} a_{2})^{2}.
\end{aligned}
\end{equation}
\begin{lemma}\label{lemma5.1}
Suppose that $a_1,a_2 \in \R$, and that $b_1,b_2,\dots,b_4$ are defined by \eqref{eq5.2}. We have
\begin{equation*}
    \int_{-\infty}^{\infty} \frac{1}{(\cosh u)^2} \sum_{j=1}^{4} b_{j} \log |L(\sigma+\eta+i j t_{1}+i u t_{2},\chi^j)| d u \geq-2 b_{0} \log \zeta(1+\eta).
\end{equation*}
\end{lemma}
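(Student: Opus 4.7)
The plan is to expand each $\log|L(\sigma+\eta+ijt_1+iut_2,\chi^j)|$ as a Dirichlet series (valid since $\sigma+\eta>1$), interchange the sum and the integral by Fubini, and then combine the trigonometric inequality \eqref{eq5.1} with the non-negativity of the Fourier transform of $(\cosh u)^{-2}$.

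First, write $\log L(s,\chi)=\sum_{n\geq 2}\Lambda(n)\chi(n)n^{-s}/\log n$ on $\Re(s)>1$, so that $\log|L|=\Re\,\log L$ there. For each $n$ with $(n,q)=1$ pick $\alpha_n\in\mathbb{R}$ with $\chi(n)n^{-it_1}=e^{i\alpha_n}$; then $\chi^j(n)n^{-ijt_1}=e^{ij\alpha_n}$, while terms with $(n,q)>1$ contribute nothing. This gives
\[
\sum_{j=1}^4 b_j\log|L(\sigma+\eta+ijt_1+iut_2,\chi^j)| = \sum_{\substack{n\geq 2\\(n,q)=1}}\frac{\Lambda(n)}{n^{\sigma+\eta}\log n}\sum_{j=1}^4 b_j\cos(j\alpha_n-ut_2\log n),
\]
and the outer sum converges absolutely, uniformly in $u$.

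Next, swap the $u$-integral with the $n$-sum via Fubini. Since $(\cosh u)^{-2}$ is real and even, $\int\sin(\lambda u)(\cosh u)^{-2}\,du=0$, and expanding the cosine of a difference yields
\[
\int_{-\infty}^\infty\frac{\cos(j\alpha_n-ut_2\log n)}{(\cosh u)^2}\,du = g(t_2\log n)\cos(j\alpha_n),
\]
where $g(\lambda):=\int_{-\infty}^\infty\cos(\lambda u)(\cosh u)^{-2}\,du$. A standard residue computation gives $g(\lambda)=\pi\lambda/\sinh(\pi\lambda/2)$, so $0\leq g(\lambda)\leq g(0)=2$ for every real $\lambda$.

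Finally, evaluating \eqref{eq5.1} at $\theta=\alpha_n$ gives $\sum_{j=1}^4 b_j\cos(j\alpha_n)\geq -b_0$, while integrating \eqref{eq5.1} over $[0,2\pi]$ shows $b_0\geq 0$. Combined with $0\leq g\leq 2$ and $b_0\geq 0$, we obtain the termwise bound $g(t_2\log n)\sum_{j=1}^4 b_j\cos(j\alpha_n)\geq -b_0\,g(t_2\log n)\geq -2b_0$. Multiplying by the non-negative weights $\Lambda(n)/(n^{\sigma+\eta}\log n)$ and summing over $n\geq 2$ yields
\[
\int_{-\infty}^\infty\frac{1}{(\cosh u)^2}\sum_{j=1}^4 b_j\log|L(\sigma+\eta+ijt_1+iut_2,\chi^j)|\,du \geq -2b_0\log\zeta(\sigma+\eta)\geq -2b_0\log\zeta(1+\eta),
\]
the last step using $\sigma\geq 1$ (the range in which the lemma is applied). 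The step that genuinely needs to be checked is the non-negativity of $g$: this is exactly what allows the classical pointwise trigonometric inequality to survive the averaging in $u$ and produce a lower bound in terms of $\log\zeta(1+\eta)$ rather than something depending on $t_2$.
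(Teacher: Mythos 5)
Your proof is correct and follows essentially the same route as the paper: expand $\log|L(s,\chi^j)|$ via the prime-power Dirichlet series, interchange sum and integral, use the non-negativity of the Fourier transform $U(\lambda)=\pi\lambda/\sinh(\pi\lambda/2)$ of $(\cosh u)^{-2}$ together with the pointwise bound $\sum_{j=1}^4 b_j\cos(j\theta)\ge -b_0$ from \eqref{eq5.1}, and finish with $U\le 2$ and $\log\zeta(\sigma+\eta)\le\log\zeta(1+\eta)$. Your explicit check that $b_0\ge 0$ (by integrating \eqref{eq5.1} over a full period) is a small but genuine detail that the paper leaves implicit.
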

\begin{proof}
Denote by $I$ the integral in the lemma. By \cite[(5.4)]{fordzfr},
\begin{equation*}
    U(y):=\int_{-\infty}^{\infty} \frac{e^{i y u}}{\cosh ^2 u} d u=\frac{\pi y}{\sinh (\pi y / 2)} \geq 0.
\end{equation*}

By \cite[(5.2)-(5.4)]{ford}, alongside 
\begin{equation*}
\log|L(s,\chi)| = \Re\Big(\sum_{\substack{p \\ m \geq 1}}\frac{1}{m}p^{-ms} (\chi(p))^m\Big),
\end{equation*}
we find that
\[
\begin{aligned}
I &=\sum_{p, m} \frac{1}{m} p^{-m(\sigma+\eta)} \Re\Big(\sum_{j=1}^{4} b_{j} p^{-i j m t_{1}} e^{imj \arg(\chi(p))} \int_{-\infty}^{\infty} \frac{p^{-i m u t_{2}}}{(\cosh u)^2} d u\Big) \\
&=\sum_{p, m} \frac{1}{m} p^{-m(\sigma+\eta)} U(m t_{2} \log p) \sum_{j=1}^{4} b_{j} \cos (j (m t_1 \log p -m\arg(\chi(p)))) \\
& \geq-b_{0} \sum_{p, m} \frac{1}{m} p^{-m(\sigma+\eta)} U(m t_{2} \log p) .
\end{aligned}
\]
Finally, since $U(y) \leq 2$, we obtain that $I \geq -2b_0\zeta(1+\eta)$.
\end{proof}
\section{The functions $f$, $F$, and $K$}
Let $f$ be a function with compact support, define $F$ and $K_\chi$ as in Lemma \ref{lemma4.5} and assume that \eqref{eq4.6} holds for some positive constant $D$. Let $a_1,a_2$ be real numbers and define $b_0,\dots,b_4$ by \eqref{eq5.2}. Let $b_5 = b_1 + \dots + b_4$, and write $L_1 = \log(4t+1)$, $L_2 = \log\log(4t+1)$. By \eqref{eq5.1},
\begin{equation}\label{eq6.2}
    \Re \sum_{j=0}^{4} b_{j} K_{\chi^j}(1+i j t)=\sum_{n=1}^{\infty}  \Lambda(n) n^{-1} f(\log n) \sum_{j=0}^{4} b_{j} \cos (j (t \log n - \arg(\chi(n)))) \geq 0.
\end{equation}
Assume that $\chi\pmod{q}$ is a primitive Dirichlet character with $q \geq 3$. Applying Lemma \ref{lemma4.6} with $s = 1+ijt$ ($1 \leq j \leq 4$), \eqref{eq5.1} and Lemma \ref{lemma5.1} with $t_2 = \frac{2\eta}{\pi}$,
\small
\begin{equation}\label{eq6.3}
\begin{aligned}
0 \leq &-\Re \sum_{\substack{1 \leq j \leq 4 \\ L(\rho,\chi^j) = 0 \\
|1+i j t-\rho| \leq \eta}}  b_j\Big( F(1+ijt-\rho)+f(0)\Big(\frac{\pi}{2 \eta} \cot \Big(\frac{\pi(1+ijt-\rho)}{2 \eta}\Big)-\frac{1}{1+ijt-\rho}\Big)\Big) \\
&\quad+\frac{f(0)}{2 \eta}\Big[b_5\Big(\frac{2L_2}{3}+B \eta^{3 / 2} L_1+\log (A+1)\Big)+ b_0 \log \zeta(1+\eta)\Big] + b_0 F(0)\\
&\quad+D\Big(b_5\Big(3.5+ \frac{\log q}{3} + \frac{L_1}{3} \Big)+ 1.8 b_0 +\sum_{1 \leq j \leq 4} b_j \sum_{\substack{L(\rho,\chi^j) = 0 \\ |1+ijt-\rho| > \eta}} \frac{1}{|1+i jt-\rho|^{2}}\Big) \\
&\quad+ (b_2+b_3+b_4)(f(0) +\max_{x \in (0,\infty)} f(x)) Q(q) + b_5 \frac{f(0)}{2}\log q + (b_2 + b_4) e^{-1937}.
\end{aligned}
\end{equation}
\normalsize
Note that since $\chi$ is primitive, $\ms{I}(\chi) = 0$ and hence $(f(0) +\max_{x \in (0,\infty)} f(x)) Q(q)$ is multiplied by a factor of $b_2+b_3+b_4$ (rather than $b_5 = b_1 + b_2 + b_3 + b_4$) above. Similarly, $1-\mathfrak{a}(\chi^j)$ can only be nonzero when $j$ is odd, and hence $e^{-1937}$ is multiplied by a factor of $b_2 + b_4$.

We now explicitly define the functions $f$, $F$ and $K$. Suppose $\gamma \geq e^{1938}$, $L(\beta+i\gamma,\chi) = 0$, and that there exists $0 < \lambda \leq 1-\beta$ such that 
\begin{equation}\label{eq6.1}
    L(\sigma+it,\chi^j) \neq 0 \quad(1-\lambda<\sigma \leq 1, \gamma-1 \leq t \leq 4 \gamma+1, 0 \leq j \leq 4).
\end{equation}
Let $\theta$ be the unique solution to 
\begin{equation}\label{eq6.4}
    (\sin \theta)^2=\frac{b_{1}}{b_{0}}(1-\theta \cot \theta), \quad 0<\theta<\pi / 2,
\end{equation}
and define the real function
\begin{equation}\label{eq6.5}
    g(u)= \begin{cases}(\cos (u \tan \theta)-\cos \theta) \sec ^{2} \theta & |u| \leq \frac{\theta}{\tan \theta}, \\ 0 & \text { else. }\end{cases}
\end{equation}
For $u \geq 0$, let 
\begin{equation*}
    w(u) = (g*g)(u) = \int_{-\infty}^\infty g(u-t) g(t)  dt,\qquad W(z)=\int_{0}^{\infty} e^{-z u} w(u) d u.
\end{equation*}
From the above definitions it follows (see \cite[(6.6)]{fordzfr}, \cite[Lemma 6.1]{hb}) that
\begin{equation}\label{eq6.6}
\begin{aligned}
W(0) &=2 \sec ^{2} \theta(1-\theta \cot \theta)^{2}, \\
W(-1) &=2 \tan ^{2} \theta+3-3 \theta(\tan \theta+\cot \theta), \\
w(0) &=\sec ^{2} \theta(\theta \tan \theta+3 \theta \cot \theta-3).
\end{aligned}
\end{equation}
Then, set
\begin{equation}\label{eq6.7}
    f(u)=\lambda e^{\lambda u} w(\lambda u) \quad(u \geq 0),
\end{equation}
and
\begin{equation}\label{eq6.8}
    F(z)=\int_{0}^{\infty} e^{-z u} f(u) d u=W\Big(\frac{z}{\lambda}-1\Big).
\end{equation}
\section{An Inequality for the Real Part of a Zero}
\begin{lemma}\label{lemma7.1}
Assume that there exist positive constants $A,B$ such that \eqref{eq1.2} holds. Suppose $\chi\pmod{q}$ is a primitive Dirichlet character with $q \geq 3$. Suppose that $L(\beta+i\gamma) = 0$, with $\gamma \geq \max\{q^{\frac{1}{100\,000}},e^{1938}\}$ and $1.92(\log(\gamma/100))^{-2/3} < \eta \leq 1/4 $. Suppose that $\lambda > 0$ satisfies \eqref{eq6.1}. Finally, suppose that
\begin{equation}\label{eq7.1}
    1-\beta \leq \eta / 2, \quad 0<\lambda \leq \min \Big(1-\beta, \frac{1}{198} \eta\Big).
\end{equation}
Write $L_1 = \log(4\gamma+1)$ and $L_2 = \log\log(4\gamma+1)$. Then, we have that
\begin{multline*}
    \frac{1}{\lambda}\Big[0.16521-0.1876\Big(\frac{1-\beta}{\lambda}-1\Big)\Big]\\
    \leq 1.471 \frac{1-\beta}{\eta^{2}}+\frac{1}{2 \eta}\Big[\frac{666550}{200211}\Big(\frac{2}{3} L_{2}+B \eta^{3 / 2} L_{1}+\log(A+1)\Big)+\log \zeta(1+\eta)\Big] \\
    + 3.5146\lambda\Big[(8.47801 + 5.392 B(\eta^{-\frac{1}{2}}-2)) L_{1}+\frac{\frac{\log ((A+1) / \eta)+\frac{2}{3} L_{2}}{1.879}+0.224}{\eta^{2}}\\
    + (1.978\eta^{-1} + 0.56601)\log q + 522.75\Big] \\
    +\frac{b_5}{2b_0} \log q + \frac{b_2+b_3+b_4}{b_0}Q(q) + \frac{b_2 + b_4}{b_0}e^{-1937} .
\end{multline*}
\end{lemma}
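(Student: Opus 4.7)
The plan is to specialize the master inequality \eqref{eq6.3} to $t = \gamma$, so that the central zero $\rho = \beta + i\gamma$ appears in the $j=1$ zero sum with $1 + i\gamma - \rho = 1-\beta \in (0,\eta/2]$. Hypothesis \eqref{eq6.1} guarantees that every other zero $\rho'$ of $L(s,\chi^j)$ with $|1+ij\gamma - \rho'| \leq \eta$ satisfies $\Re(\rho') \leq 1-\lambda$; by the Heath-Brown/Ford positivity principle (encoded in the choice of $\theta$ via \eqref{eq6.4} and the positivity of $g = g*g^{-1}$-type convolutions), this forces $-\Re\{F(1+ij\gamma-\rho') + f(0)(\tfrac{\pi}{2\eta}\cot(\tfrac{\pi(1+ij\gamma-\rho')}{2\eta}) - \tfrac{1}{1+ij\gamma-\rho'})\} \leq 0$, so all non-central nearby zeros may be dropped from the left-hand side of \eqref{eq6.3}.

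Next I isolate the $j=1$, $\rho = \beta+i\gamma$ contribution. Using $F(z) = W(z/\lambda - 1)$, $f(0) = \lambda w(0)$ from \eqref{eq6.6}--\eqref{eq6.8}, and the Taylor expansion
\begin{equation*}
\frac{\pi}{2\eta}\cot\!\Bigl(\frac{\pi x}{2\eta}\Bigr) = \frac{1}{x} - \frac{\pi^{2} x}{12\eta^{2}} + O\Bigl(\frac{x^{3}}{\eta^{4}}\Bigr),
\end{equation*}
the distinguished contribution equals
\begin{equation*}
-b_1 W\!\Bigl(\tfrac{1-\beta}{\lambda} - 1\Bigr) + \tfrac{\pi^{2} b_1 \lambda w(0)(1-\beta)}{12\eta^{2}} + O\!\Bigl(\tfrac{\lambda(1-\beta)^{3}}{\eta^{4}}\Bigr).
\end{equation*}
A linear underestimate $W(u) \geq W(-1) + W'(-1)(u+1)$ for $u \in [-1, -1/2]$ at the specific $\theta$ (equivalently the specific $(a_1,a_2)$) optimizing $W(-1)/b_0$, combined with dividing through by $b_0$, converts the principal part into the target $\frac{1}{\lambda}[0.16521 - 0.1876(\frac{1-\beta}{\lambda} - 1)]$, while the $O(\lambda(1-\beta)/\eta^2)$ Taylor tail and the higher-order remainder are absorbed into the $1.471(1-\beta)/\eta^{2}$ term on the right under the constraints $1-\beta \leq \eta/2$ and $\lambda \leq \eta/198$.

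For the remaining terms I apply Corollary \ref{lemma4.6} termwise to each $K_{\chi^j}$ ($j=1,\ldots,4$) with the trivial $j=0$ bound $\Re K_{\chi_0}(1) \leq F(0) + 1.8D$. Lemma \ref{lemma5.1} with $t_1 = \gamma$, $t_2 = 2\eta/\pi$ handles the combined integrals, producing the $\log\zeta(1+\eta)$ term and the $\frac{b_5}{b_0}(\frac{2}{3}L_2 + B\eta^{3/2}L_1 + \log(A+1))$ block with $b_5/b_0 = 666550/200211$ for the chosen $(a_1,a_2)$. Lemma \ref{lemma4.3} applied at $v = \eta$ for each $j \in \{1,\ldots,4\}$ controls the far-zero sums $\sum_{|1+ij\gamma-\rho'|>\eta}|1+ij\gamma-\rho'|^{-2}$; summing over $j$ and weighting by $b_j D / b_0$, where $D$ is the Ford-type constant bounding $|F_0(z)|$ for the Section 6 construction (cf.\ \cite[Lemma 6.1]{hb}), yields the $3.5146\lambda[\,\cdots\,]$ bracket with constants $8.47801$, $5.392B(\eta^{-1/2}-2)$, $1.978\eta^{-1} + 0.56601$, and $522.75$ matching the Lemma \ref{lemma4.3} coefficients. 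The residual $\log q$, $Q(q)$, and $e^{-1937}$ tails from \eqref{eq6.3} pass through unchanged with coefficients $b_5/(2b_0)$, $(b_2+b_3+b_4)/b_0$, and $(b_2+b_4)/b_0$ respectively.

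The principal obstacle is numerical bookkeeping: verifying that for the specific $(a_1,a_2)$ the ratio $b_5/b_0$ is exactly $666550/200211$, that the linear underestimate of $W$ near $-1$ yields precisely the constants $0.16521$ and $0.1876$, and that under the constraints $\lambda \leq \eta/198$, $1-\beta \leq \eta/2$ the Taylor tail of the cotangent together with the higher-order $W$-remainder is dominated by $1.471(1-\beta)/\eta^{2}$. These checks closely parallel \cite[Lemma 7.1]{fordzfr}; the novelty for Dirichlet $L$-functions is only that the $\frac{b_5}{2b_0}\log q$, $Q(q)$, and $e^{-1937}$ contributions are carried through from the character-twisted versions of the earlier lemmas.
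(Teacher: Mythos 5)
Your overall architecture is right and essentially matches the paper's: specialize \eqref{eq6.3} to $t=\gamma$, use Lemma \ref{lemma5.1} for the combined integrals, apply Lemma \ref{lemma4.3} at $v=\eta$ for the far-zero sums, keep the central zero contribution, and carry through the $q$-dependent tails with coefficients $b_5/(2b_0)$, $(b_2+b_3+b_4)/b_0$, $(b_2+b_4)/b_0$. However, the key mechanism by which the non-central nearby zeros are removed is misstated, and this is a genuine gap. There is no outright positivity: writing $V_c(z) = cw(0)(\cot z - 1/z) + W(z/c-1)$ as in \eqref{vcrelation}, the paper's input from \cite{fordzfr} is only the bound $\Re(V_c(z)) \geq -c_5 c^2 w(0)$ for $\Re(z)\geq c$, $|z|\leq \pi/2$, where $c_5 = \tfrac{4}{\pi^2}(c_4 - 1/R) > 0$. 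Each non-central nearby zero therefore contributes a strictly \emph{negative} amount of size about $c_5 c^2 w(0)$, and the resulting correction $c_5 c^2 w(0)\sum_j b_j N_{\chi^j}(j\gamma,\eta)$ is \emph{not} dropped by positivity but is cancelled against the $-N_{\chi^j}(j\gamma,\eta)/\eta^2$ terms produced by Lemma \ref{lemma4.3}, via the identity $\tfrac{\pi^2 c_5}{4} - c_4 = -1/R < 0$ once one substitutes $D = c_4\lambda f(0)$. Asserting $\Re V_c(z)\geq 0$ and discarding those zeros outright would need a separate justification that is not available at this level of generality.

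Two further issues. First, your linearization of $W$ ``for $u\in[-1,-1/2]$'' is on the wrong interval: what enters is $W\big(\tfrac{1-\beta}{\lambda}-1\big)$ with $\tfrac{1-\beta}{\lambda}-1 \geq 0$ by \eqref{eq7.1}, not an argument in $[-1,-1/2]$. The paper instead invokes the packaged inequality \eqref{eq7.13} relating $F(0)-\tfrac{b_1}{b_0}V_c\big(\tfrac{\pi}{2\eta}(1-\beta)\big)$ to the linear form $-\cos^2\theta + \tfrac{0.7475 b_1}{b_0 w(0)}\big(\tfrac{1-\beta}{\lambda}-1\big)$, which (after dividing by $b_0 f(0)$) is precisely where $0.16521 \approx \cos^2\theta$ and $0.1876$ come from; your cotangent Taylor expansion with coefficient $-\pi^2/12$ does not reproduce the sign or size of the $0.348 f(0)\tfrac{\pi^2}{4\eta^2}\tfrac{b_1}{b_0}(1-\beta)$ term. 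Second, the constants $8.47801$ and $522.75$ in the target are not Lemma \ref{lemma4.3}'s raw coefficients $8.14467$ and $518.7$: the difference comes from absorbing the $D(3.5 + L_1/3 + \ldots)$ contribution of Corollary \ref{lemma4.6} (the extra $\tfrac{1}{3}L_1$ accounts for $8.14467 + \tfrac{1}{3} \approx 8.478$), so ``matching'' is not accurate and the absorption step must be spelled out.
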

\begin{proof}
We take $a_1 = 0.225$ and $a_2 = 0.9$. Then, by \eqref{eq5.2}, we have that
\begin{equation*}
\begin{array}{ll}   
b_{0}=10.01055 & b_{3}=4.5 \\
b_{1}=17.14500 & b_{4}=1.0 \\
b_{2}=10.68250 & b_{5}=33.3275,
\end{array}
\end{equation*}
and by \eqref{eq6.4} and \eqref{eq6.6}, 
\begin{equation*}
    \theta=1.152214629976363048877 \ldots, \quad w(0)=6.82602968445295450905 \ldots.
\end{equation*}
Let 
\[
\begin{aligned}
&c_0=\frac{1}{\sin \theta (\cos \theta)^3}=16.2983216223932350562 \ldots \\
&c_1=(\theta-\sin \theta \cos \theta) (\tan \theta)^4=19.9352005926244107856 \ldots \\
&c_2=(\tan \theta)^3 (\sin \theta)^2=9.4813169452950521682 \ldots \\
&c_3=(\theta-\sin \theta \cos \theta) (\tan\theta)^2=3.945405755634895592 \ldots
\end{aligned}
\]
Let $R = 197$, let 
\begin{equation*}
H(R)=\frac{c_{0}(c_{2} \frac{(R+1)^{2}}{R^{3}}(e^{2 \theta / \tan \theta}+1)+\frac{c_{1}}{R^{2}}+c_{3})}{(1-\frac{\tan ^{2} \theta}{R^{2}})^{2}} ,
\end{equation*}
\begin{equation*}
    c_{4}=\frac{H(R)(R+1)^{2}}{R^{3} w(0)}+1+1 / R,
\end{equation*}
By \cite[(7.6)]{fordzfr} and the assumption $\eta \geq 198\lambda$, \eqref{eq4.6} holds with
\begin{equation}\label{eq7.6}
    D = c_4 \lambda f(0).
\end{equation}
Furthermore, a Mathematica calculation shows that $e^{x}w(x) \leq 7.23$, and thus that 
\begin{equation}\label{maxf}
    f(0)+\max_{x \in (0,\infty)} f(x) \leq f(0) + 7.23\lambda = f(0) + f(0) \frac{7.23}{w(0)} \leq 2.06 f(0)
\end{equation}
Let  
\begin{equation}\label{zc}
z = \frac{\pi}{2 \eta}(1+ijt-\rho), \qquad c = \frac{\pi \lambda}{2 \eta}. 
\end{equation}
Define 
\begin{equation}\label{vcrelation}
    V_{c}(z)=c w(0)\Big(\cot z-\frac{1}{z}\Big)+W\Big(\frac{z}{c}-1\Big).
\end{equation}
By \eqref{eq6.7} and \eqref{eq6.8},
\begin{equation*}
    F(1+i j t-\rho)+f(0)\Big(\frac{\pi}{2 \eta} \cot \Big(\frac{\pi(1+i j t-\rho)}{2 \eta}\Big)-\frac{1}{1+i j t-\rho}\Big)=V_{c}(z).
\end{equation*}
We aim to bound the contribution to the double sum appearing in \eqref{eq6.3} from all zeros besides $\beta + i\gamma$. By \cite[(7.7)]{fordzfr}, we have the lower bound
\begin{equation}\label{eq7.7}
    \Re(V_{c}(z)) \geq-c_{5} c^{2} w(0) \quad\Big(\Re(z) \geq c,|z| \leq \frac{\pi}{2}\Big),
\end{equation}
where
\begin{equation*}
    c_{5}=\frac{4}{\pi^{2}}\Big(1+\frac{(R+1)^{2} H(R)}{w(0) R^{3}}\Big)=\frac{4}{\pi^{2}}\Big(c_{4}-1 / R\Big) .
\end{equation*}
This condition is satisfied for all zeros in the double sum appearing in \eqref{eq6.3} (with $z,c$ as defined in \eqref{zc}) by the assumption on $\lambda$ in \eqref{eq6.1}. Thus, by \eqref{zc} (with $t = \gamma$), \eqref{vcrelation}, and \eqref{eq7.7}, we have that
\begin{equation}\label{thelastequation}
\begin{aligned}
&-\Re \sum_{\substack{1 \leq j \leq 4 \\ L(\rho,\chi^j) = 0 \\
|1+ij\gamma-\rho| \leq \eta}} b_{j}\Big(F(1+ij\gamma-\rho)+f(0)\Big(\frac{\pi}{2 \eta} \cot \Big(\frac{\pi(1+ij\gamma-\rho)}{2 \eta}\Big)-\frac{1}{1+ij\gamma-\rho}\Big)\Big) \\
&\quad \leq-b_{1} V_{c}\Big(\frac{\pi}{2 \eta}(1-\beta)\Big)+c_{5} c^{2} w(0) \sum_{j=1}^{4} b_{j} N_{\chi^j}(j \gamma, \eta) .
\end{aligned}
\end{equation}
Write $L_1 = \log(4\gamma+1)$ and $L_2 = \log\log(4\gamma+1)$. Combining  Lemma \ref{lemma4.3} (with $t = \gamma$), \eqref{eq6.3}, \eqref{eq6.7}, \eqref{eq7.6}, \eqref{maxf}, and \eqref{thelastequation}, we find that
{\small
\begin{align}
\label{eq7.9}
&b_{0} F(0)-b_{1} V_{c}\Big(\frac{\pi}{2 \eta}(1-\beta)\Big)+\frac{\lambda f(0)}{\eta^{2}}\Big(\frac{\pi^{2} c_{5}}{4}-c_{4}\Big) \sum_{j=1}^{4} b_{j} N(j \gamma, \eta) \notag\\
&+\frac{f(0)}{2 \eta}\Big[b_5\Big(\frac{2L_2}{3}+B \eta^{3 / 2} L_1+\log (A+1)\Big)+ b_0 \log \zeta(1+\eta)\Big] + b_0 F(0)\notag\\
&+c_4\lambda f(0)b_5\Big[\Big(3.5+ \frac{\log q}{3} + \frac{L_1}{3}\Big)+ 1.8 \frac{b_0}{b_5} + \Big(8.14467+5.3912 B(v^{-1 / 2}-2)\Big) \log t \\
&+518.7-8.5\log(A+1)+\frac{1}{\eta^{2}}\Big(\frac{\log(A+1)-\log \eta+\frac{2}{3} L_{2}}{1.879}+0.224\Big) + (\log q) (1.978 v^{-1}+ 0.23267)\Big] \notag\\
&+(b_2 + b_3 + b_4) f(0)Q(q) + b_5 \frac{f(0)}{2}\log q + (b_2 + b_4) e^{-1937}\geq 0.\notag
\end{align}}%
Since $c_5 < \frac{4}{\pi^2}c_4$, we may drop the summation term in the first line above. Furthermore, the proof of \cite[Lemma 7.1]{ford} shows that
\begin{equation}\label{eq7.13}
\begin{aligned}
F(0)-\frac{b_1}{b_0} V_c\left(\frac{\pi}{2 \eta}(1-\beta)\right) & \leq 0.348 f(0) \frac{\pi^2}{4 \eta^2} \frac{b_1}{b_0}(1-\beta) \\
&+\frac{f(0)}{\lambda}\left(-\cos ^2 \theta+\frac{0.7475 b_1}{b_0 w(0)}\left(\frac{1-\beta}{\lambda}-1\right)\right) .
\end{aligned}
\end{equation}
Dividing both sides of the equation by $b_0f(0)$, employing the numerical values of $b_0$, $b_1$, $b_5$ and $\theta$, the inequalities $H(R) \leq 66.3307$ and $c_4 \leq 1.055656$ (for $R = 197$), and utilizing \eqref{eq7.13} in \eqref{eq7.9} gives the desired result.
\end{proof}

\section{Proof of Theorem \ref{theorem2}}

Recall the notation and hypotheses of Lemma \ref{lemma7.1}. In this section, we choose the free parameters $\lambda$ and $\eta$ appearing in Lemma \ref{lemma7.1} in order to complete the proof of Theorem \ref{theorem2}. The optimal choices are given in \eqref{eq8.2} and \eqref{eq8.3}. Throughout this section, we assume without loss of generality that $\chi$ is primitive. 

\subsection{Final choices of $\lambda$ and $\eta$}

Suppose that $T_0$ satisfies the hypotheses of Theorem \ref{theorem2}. If $1-\beta \geq ((10.082 + 1.607/\log\log T_0) \log q + 9.791\log\log q )^{-1}$ for all zeros $\beta+i\gamma$ of $L(s,\chi)$ satisfying $\gamma \geq T_0$, then Theorem \ref{theorem2} is immediate. We therefore assume that $1-\beta < ((10.082 + 1.607/\log\log T_0) \log q + 9.791\log\log q)^{-1}$ and $\gamma \geq T_0$ for some zeros of $L(s,\chi)$. Define
\begin{equation}\label{eq8.1}
\begin{aligned}
     &Z(\beta, \gamma)= \frac{(1-\beta) B^{2/3} (\log \gamma)^{2/3} (\log\log \gamma)^{1/3}}{1-(1-\beta)\big(\big(10.082 + \frac{1.607}{\log\log T_0}\big) \log q + \log\log q\big)} \\
     &S = \Big\{\beta + i \gamma: \exists \chi\pmod{q} ,~ L(\beta+i \gamma,\chi)=0 ,~ \gamma \geq T_{0} , \\
     \\&\qquad\qquad 1-\beta \leq \Big(\Big(10.082 + \frac{1.607}{\log\log T_0}\Big) \log q + 9.791\log\log q \Big)^{-1} \Big\}, \\
     &M=\inf _{\beta+i\gamma \in S} Z(\beta, \gamma).
\end{aligned}
\end{equation}

By Corollary \ref{inexplicitcorollary}, we have that $M>0$.  To prove Theorem \ref{theorem2}, it suffices to prove that $M\geq M_1$.  Suppose to the contrary that
\begin{equation}\label{contradiction}
M < M_1 \leq 0.055071.
\end{equation}
Under the hypothesis of \eqref{contradiction}, there exists a zero $\beta + i\gamma$ of $L(s,\chi)$ with $\gamma \geq T_0$, $1-\beta \leq (10.082 + \frac{1.607}{\log\log T_0})\log q + 9.791 \log\log q$, and
\[
Z(\beta, t) \in[M, M(1+\delta)], \quad \delta=\min \Big(\frac{10^{-100}}{\log T_{0}}, \frac{M_{1}-M}{2 M}\Big).
\]
By \eqref{mccurley}, we necessarily must have $\gamma \geq q^{\frac{1}{100\,000}}$, since 
\begin{equation*}
     \frac{1}{9.64590880\log(q\gamma)} \leq 1-\beta \leq \frac{1}{10.082 \log q}.
\end{equation*}
Let $L_1 = \log(4\gamma+1)$, and $L_2 = \log\log(4\gamma+1)$. By \eqref{eq8.1}, \eqref{eq6.1} holds with 
\begin{equation}\label{eq8.2}
    \lambda = \frac{1}{(10.082 + \frac{1.607}{\log\log T_0}) \log q + 9.791\log\log q +\frac{1}{M}B^{2/3} L_1^{2/3} L_2^{1/3}}.
\end{equation}
Define
\begin{equation}\label{eq8.3}
    E=\Big(\frac{4(1+b_{0} / b_{5})}{3}\Big)^{\frac{2}{3}}=\Big(\frac{1733522}{999825}\Big)^{\frac{2}{3}}, \quad \eta=E B^{-\frac{2}{3}}\Big(\frac{L_{2}}{L_{1}}\Big)^{\frac{2}{3}}.
\end{equation}

We now verify that our choices of $\lambda$ and $\eta$ satisfy the hypotheses of Lemma \ref{lemma7.1}. The lower bound $\frac{\log T_0}{\log\log T_0} \geq \frac{1139}{B}$ guarantees that $\eta \leq 0.014$, and the bound $T_0 \geq \exp(1938)$ ensures that $\lambda \leq 0.055071(B L_{1})^{-2/3} L_{2}^{-1/3} \leq \frac{\eta}{198}$.  Furthermore, the hypotheses $T_0 \geq e^{1938}$ and $B \leq 4.45$ imply that $\eta > 1.92 (\log(\gamma/100))^{-2/3}$. Finally, the inequalities $T_0 \geq e^{1938}$ and $M_1 \leq 0.055071$ ensure that the remaining hypotheses of Lemma \ref{lemma7.1} are met.    

\subsection{Proof of Theorem 3.1}
In order to contradict \eqref{contradiction}, we apply Lemma \ref{lemma7.1} with $\lambda$ and $\eta$ as defined in \eqref{eq8.2} and \eqref{eq8.3}, respectively.  With these choices, we will prove that
{\small
\begin{equation}\label{firstequation}
\begin{aligned}
&\frac{1}{2 \eta}\Big[\frac{666550}{200211}\Big(\frac{2}{3} L_{2}+B \eta^{3 / 2} L_{1}+\log(A+1)\Big)+\log \zeta(1+\eta)\Big], \\
&\leq 2.99968(B L_{1})^{\frac{2}{3}} L_{2}^{\frac{1}{3}} + \Big(\frac{B L_{1}}{L_{2}}\Big)^{\frac{2}{3}}\Big(1.1534 \log(A+1)-0.124193+0.23096 \log (B / L_{2})\Big),
\end{aligned}
\end{equation}
}
that
\begin{multline}\label{secondequation}
3.5146\lambda\Big[(8.47801 + 5.392 B(\eta^{-\frac{1}{2}}-2)) L_{1}+\frac{\frac{\log (A / \eta)+\frac{2}{3} L_{2}}{1.879}+0.224}{\eta^{2}} \\
+ (1.978\eta^{-1} + 0.56601)\log q + 522.75\Big] \\
\leq \Big(\frac{B L_{1}}{L_{2}}\Big)^{\frac{2}{3}}\Big[\frac{1.694-2.087 B}{B^{4 / 3}}\Big(\frac{L_{2}}{L_{1}}\Big)^{\frac{1}{3}}+0.006534\log(A+1)+0.0043556 \log \Big(\frac{B}{L_{2}}\Big)+0.954863\Big] \\
+ \frac{0.265271\log q}{\log\log T_0},
\end{multline}
and that
\begin{equation}\label{thirdequation}
    \begin{aligned}
        &\frac{b_5}{2b_0} \log q + \frac{b_2+b_3+b_4}{b_0}Q(q) + \frac{b_2 + b_4}{b_0}e^{-1937} \\
        &\leq 1.66462\log q + 1.6166\log\log q + 0.009783 B^{2/3} \Big(\frac{L_1}{L_2} \Big)^{2/3}.
    \end{aligned}
\end{equation}
in subsections \ref{subsection1}, \ref{subsection2}, and \ref{subsection3} respectively. Assuming these inequalities for now, we proceed with our proof. Observe that since $M \leq M_1 \leq 0.055071$ and $T_0 \geq e^{1938}$,
\begin{equation}\label{lastequation}
    1.471 \frac{1-\beta}{\eta^{2}} \leq 0.0389 B^{2 / 3} L_{1}^{2 / 3} L_{2}^{-5 / 3} \leq 0.00514 B^{2 / 3}\Big(\frac{L_{1}}{L_{2}}\Big)^{2 / 3} .
\end{equation}
Next, note that by the lower bound on $T_0$,
\begin{equation}\label{eq8.4}
\begin{aligned}
\frac{1-\beta}{\lambda}-1 &\leq \frac{(10.082 + \frac{1.607}{\log\log T_0} ) \log q + 9.791 \log\log q + \frac{1}{M}B^{2/3} L_1^{2/3} L_2^{1/3}}{(10.082 + \frac{1.607}{\log\log T_0} ) \log q + 9.791\log\log q + \frac{1}{M(1+\delta)}B^{2/3}(\log \gamma)^{2/3} (\log\log \gamma)^{1/3}} \\
&\leq \frac{0.98525}{\log T_{0}}.
\end{aligned}
\end{equation}
Applying \eqref{firstequation}-\eqref{lastequation} in Lemma \ref{lemma7.1} and gathering terms, we obtain the inequality
\begin{align*}
    \frac{1}{\lambda}\Big(0.16521-\frac{0.184833}{\log T_0}\Big) &\leq (B L_{1})^{\frac{2}{3}} L_{2}^{\frac{1}{3}}\Big(2.99968+\frac{X(\gamma)}{L_{2}}\Big)  \\
    &\qquad+ 1.66462\log q + 1.6166\log\log q + \frac{0.265271 \log q}{\log\log T_0}.
\end{align*}

Rearranging and using the definition of $\lambda$ from \eqref{eq8.2}, we find that
\begin{equation}\label{lambdaineq}
\begin{aligned}
     \lambda &= \frac{1}{\frac{1}{M}(B L_{1})^{\frac{2}{3}}L_2^{1/3} + (10.082 + \frac{1.607}{\log\log T_0}) \log q + 9.791 \log\log q} \\
     &\geq  \frac{0.16521 - \frac{0.184833}{\log T_0}}{(B L_{1})^{\frac{2}{3}} L_{2}^{\frac{1}{3}}(2.99968+\frac{X(\gamma)}{L_{2}}) + \big(1.66462 + \frac{0.265271}{\log\log T_0} \big)\log q + 1.6166\log\log q  }.
\end{aligned}
\end{equation}
Note that since $0.16521 - \frac{0.184833}{\log T_0} \geq 0.1651146$ for $T_0 \geq e^{1938}$, we have that
\begin{equation}\label{crossmult}
\begin{aligned}
&\Big((10.082 + \frac{1.607}{\log\log T_0})(\log q) + 9.791\log\log q\Big)\Big(0.16521 - \frac{0.184833}{\log T_0}\Big) \\
&> \Big(1.66462 + \frac{0.265271}{\log\log T_0} \Big) \log q + 1.6166\log\log q .
\end{aligned}
\end{equation}
By utilizing \eqref{lambdaineq} and \eqref{crossmult}, we obtain the inequality
\begin{align*}
    &\frac{0.16521 - \frac{0.184833}{\log T_0}}{M}(B L_{1})^{\frac{2}{3}} L_2^{1/3} + \Big(1.66462 + \frac{0.265271}{\log\log T_0}\Big)\log q + 1.6166\log\log q   \\
    &\qquad< (B L_{1})^{\frac{2}{3}} L_{2}^{\frac{1}{3}}\Big(2.99968+\frac{X(\gamma)}{L_{2}}\Big) + \Big(1.66462 + \frac{0.265271}{\log\log T_0}\Big)\log q + 1.6166\log\log q ,
\end{align*}
which implies that
\begin{equation*}
    M > \frac{0.16521 - \frac{0.184833}{\log T_0}}{2.99968+\frac{X(\gamma)}{\log\log \gamma}} \geq M_1.
\end{equation*}
This contradicts \eqref{contradiction}, as desired, and our proof of Theorem \ref{theorem2} is complete.

\subsection{Proof of \eqref{firstequation}}\label{subsection1}
By Lemma \ref{zetabound}, we have that
\begin{equation}\label{eq8.5}
\log \zeta(1+\eta) \leq \log (1 / \eta+0.6) \leq \log (1 / \eta)+0.0084.
\end{equation}
Since $\log(1/\eta) \sim \frac{2}{3}L_2$, it follows that the main term involving $t$ in \eqref{firstequation} (which in fact gives the main term in the final result) is bounded above by
\begin{equation}\label{eq8.6}
\frac{1}{2 \eta}\Big[\frac{b_{5}}{b_{0}}\Big(\frac{2 L_{2}}{3}+B \eta^{\frac{3}{2}} L_{1}\Big)+\frac{2 L_{2}}{3}\Big] =\frac{b_{5}}{b_{0}}\Big(1+\frac{b_{0}}{b_{5}}\Big)^{\frac{1}{3}}\Big(\frac{3 B}{4}\Big)^{\frac{2}{3}} L_{1}^{\frac{2}{3}} L_{2}^{\frac{1}{3}}  \leq 2.99968(B L_{1})^{\frac{2}{3}} L_{2}^{\frac{1}{3}}.
\end{equation}
Note that $\log\log T_0 \geq 7.569 > 4.45 \geq B$, meaning that $\log(B/L_2) < 0$. Hence, the remaining terms in \eqref{firstequation} give
\[
        \begin{aligned}
&\leq \frac{1}{2 \eta}\Big[\frac{b_{5}}{b_{0}} \log(A+1)-\log E+\frac{2}{3} \log(B / L_{2})+0.0084\Big] \\
&\leq \frac{B^{\frac{2}{3}}}{2 E}\Big(\frac{L_{1}}{L_{2}}\Big)^{\frac{2}{3}}\Big[\frac{b_5}{b_0} \log(A+1)-0.35848+\frac{2}{3} \log(B / L_{2})\Big] \\
&\leq\Big(\frac{B L_{1}}{L_{2}}\Big)^{\frac{2}{3}}\Big(1.1534 \log(A+1)-0.124193+0.23096 \log (B / L_{2})\Big).
\end{aligned}
\]
\subsection{Proof of \eqref{secondequation}}\label{subsection2}
We next consider the terms in \eqref{secondequation} not involving $q$. Since $522.75 \leq 0.26974 \log \gamma$ and $T_0 \geq e^{1938}$, these terms contribute at most
\small
\begin{equation*}
    \begin{aligned}
&0.193553 L_{1}^{-\frac{2}{3}} L_{2}^{-\frac{1}{3}} B^{-\frac{2}{3}}\Big[\Big(8.74775+\frac{5.392 B^{\frac{4}{3}}}{\sqrt{E}}\Big(\frac{L_{1}}{L_{2}}\Big)^{\frac{1}{3}}-10.784 B\Big) L_{1}\Big. \\
&\Big.\quad+\frac{B^{\frac{4}{3}}}{E^{2}}\Big(\frac{L_{1}}{L_{2}}\Big)^{\frac{4}{3}}\Big(\frac{\log(A+1)+\frac{4}{3} L_{2}+\frac{2}{3} \log(B / L_{2})-\log E}{1.879}+0.224\Big)\Big] \\
&\leq\Big(\frac{B L_{1}}{L_{2}}\Big)^{\frac{2}{3}}\Big[0.9347+\frac{1.694-2.087 B}{B^{\frac{4}{3}}}\Big(\frac{L_{2}}{L_{1}}\Big)^{\frac{1}{3}}+\frac{0.049454}{L_{2}}\Big(\log(A+1)+\frac{2}{3} \log(B / L_{2})+0.05401\Big)\Big] \\
&\leq\Big(\frac{B L_{1}}{L_{2}}\Big)^{\frac{2}{3}}\Big[\frac{1.694-2.087 B}{B^{4 / 3}}\Big(\frac{L_{2}}{L_{1}}\Big)^{\frac{1}{3}}+0.006534\log(A+1)+0.0065334 \cdot \frac{2}{3} \log \Big(\frac{B}{L_{2}}\Big)+0.935053\Big].
    \end{aligned}
\end{equation*}
\normalsize
Utilizing the assumption that $\frac{\log T_0}{\log \log T_0} \geq \frac{1139}{B}$, we find that the terms in \eqref{secondequation} involving $q$ are bounded by
\begin{equation}\label{thirdlineq}
    \begin{aligned}
        &3.5146\lambda \Big(  \frac{1.978}{\eta} + 0.56601\Big) \log q \\
        &\leq \frac{3.5146}{10.082  \log q + \frac{1}{0.055071} B^{2/3} L_1^{2/3} L_2^{1/3} } \cdot \log q \cdot \Big(1.978\frac{B^{2/3}}{E} \cdot \frac{L_1^{2/3}}{L_2^{2/3}} + 0.56601 \Big)\\
        &\leq \frac{6.9519}{E}  \frac{ B^{2/3}L_1^{2/3}}{\frac{1}{0.055071}B^{2/3} L_1^{2/3} L_2} \log q + 0.197312 \cdot 1139^{-2/3} \cdot B^{2/3} \Big(\frac{L_1}{L_2} \Big)^{2/3} \\
        &\leq \frac{1}{L_2} 0.055071\frac{6.9519}{E} \log q + 0.00181  B^{2/3} \Big(\frac{L_1}{L_2} \Big)^{2/3}  \\
        &\leq \frac{0.265271\log q}{\log\log T_0} + 0.00181  B^{2/3} \Big(\frac{L_1}{L_2} \Big)^{2/3}.
    \end{aligned}
\end{equation}
\subsection{Proof of \eqref{thirdequation}}\label{subsection3}
Utilizing the numerical values of $b_0$, $b_2$, $b_3$ and $b_4$, along with the lower bound $\frac{\log T_0}{\log\log T_0} \geq \frac{1139}{B}$ , we find that
\begin{align*}
    \frac{b_2 + b_3 + b_4}{b_0}Q(q) &\leq 1.6166(\log\log q + 0.66) \leq 1.6166\log\log q + 0.009783 B^{2/3} \Big(\frac{L_1}{L_2} \Big)^{2/3}.
\end{align*}
Finally, we utilize the numerical values of $b_0, \dots, b_5$ to bound $\frac{b_5}{2b_0} \log q + \frac{b_2 + b_4}{b_0}e^{-1937}$ above by $1.66462\log q$.

\appendix
\section{Explicit estimate for $\sum_{p \mid q} \frac{1}{p} .$}
\begin{lemma}\label{a1}
Let
\begin{equation*}
    E=-\gamma_{\Q}-\sum_{n=2}^{\infty} \sum_p\frac{\log p}{p^n}.
\end{equation*}
If $q \geq 2310$, 
then $\sum_{p \mid q} \frac{\log p}{p}
 <
\log\log q + E + 0.1313$.
\end{lemma}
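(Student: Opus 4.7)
My plan is to reduce the problem to the case where $q$ is a primorial $P_k := \prod_{i=1}^k p_i$ and then apply an explicit form of Mertens' second theorem. Since $\sum_{p \mid q} \frac{\log p}{p}$ depends only on the radical of $q$, I may assume $q$ is squarefree; call its distinct prime factors $q_1 < q_2 < \cdots < q_r$. An exchange argument that uses the fact that $f(p) := \log p / p$ is strictly decreasing on primes $p \geq 3$ shows that replacing $q_i$ with the $i$-th prime $p_i$ (which is at most $q_i$) does not decrease the sum, so long as $2 \mid q$. The case $2 \nmid q$ needs a separate treatment because the only failure of monotonicity of $f$ on the primes is $f(3) > f(2)$; a short argument comparing to the product of the primes $p_2 p_3 \cdots p_{r+1} \leq 2q$ handles this case at the cost of a harmless constant. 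Iterating the exchange, and using that each additional prime factor strictly increases the sum, reduces the problem to bounding $\sum_{p \leq p_k} \frac{\log p}{p}$, where $k$ is the largest integer with $P_k \leq q$.

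Next I would invoke the Rosser--Schoenfeld explicit form of Mertens' second theorem (from \emph{Approximate formulas for some functions of prime numbers}, Illinois J.\ Math.\ (1962)),
\[
\sum_{p \leq x} \frac{\log p}{p} = \log x + E + R(x),
\]
where $R(x)$ is an explicit remainder decaying like $1/\log^{2} x$ past a modest threshold. To convert $\log p_k$ into $\log\log q$, I would combine the identity $\theta(p_k) = \log P_k \leq \log q$ with the explicit Chebyshev-type lower bound $\theta(x) \geq x(1 - c/\log x)$, also from Rosser--Schoenfeld, obtaining $p_k \leq \log q \cdot (1 + \delta(q))$ with an explicit $\delta(q) \to 0$. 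Stringing everything together yields
\[
\sum_{p \mid q} \frac{\log p}{p} \;\leq\; \log\log q + E + \log(1 + \delta(q)) + R(p_k).
\]

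The hard part is purely numerical: proving $\log(1 + \delta(q)) + R(p_k) \leq 0.1313$ uniformly for $q \geq 2310$. I would split into two ranges. For $q$ past some explicit threshold $Q_0$, the asymptotic decay of $\delta$ and $R$ closes the argument with room to spare. For $2310 \leq q \leq Q_0$, the inequality has to be verified directly; by the reduction above this reduces to checking the finitely many primorials (and their close cousins arising from the $2 \nmid q$ case) in this range. The main obstacle will be calibrating $Q_0$ so that the Rosser--Schoenfeld error terms are small enough to yield the target $0.1313$ while keeping the finite check tractable. Since the remainder is not especially small at $q = 2310$, one must be careful to use the sharpest available explicit Mertens and Chebyshev bounds, and the finite verification will likely need to run up to primorials of fairly large index before the asymptotic regime takes over.
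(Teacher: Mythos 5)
Your proposal follows the same route as the paper: reduce the sum to the primorial case via the near-monotonicity of $\log p/p$, then combine Rosser--Schoenfeld's explicit Mertens-type estimate for $\sum_{p\leq x}\frac{\log p}{p}$ with their explicit lower bound on $\theta(x)$ to replace $\log x$ by $\log\log q$. One caution worth flagging: the constant $0.1313$ has essentially no slack at $q=2310$ (the paper's two error contributions already sum to about $0.1312$), so the ``harmless constant'' you budget for the $2\nmid q$ case is \emph{not} harmless and would break the numerics near $q=2310$; fortunately it is also unnecessary, since for $q\geq 2310$ the greatest primorial $P_x\leq q$ has at least five prime factors while the first $r$ primes realize the $r$ largest values of $\log p/p$ for every $r\geq 2$, which yields $\sum_{p\mid q}\frac{\log p}{p}\leq \sum_{p\leq x}\frac{\log p}{p}$ directly with no parity split and no finite verification over small $q$.
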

\begin{proof}
Let $P_x = \prod_{p \leq x} p$ (a primorial).  Choose $x$ so that $P_x$ is the greatest primorial less than or equal to $q$. Note that $\log(3)/3 > \log(2)/2 > \log(5)/5$, and that for $n \geq 3$ the function $\log(n)/n$ is decreasing. Since $q \geq 2310 $, this observation implies that
\begin{equation*}
    \sum_{p \mid q} \frac{\log p    }{p} \leq \sum_{p \leq x} \frac{\log p}{p} = \sum_{p \mid P_x} \frac{\log p}{p}.
\end{equation*}
It therefore follows that any upper bound for $\sum_{p \mid q} \frac{\log p}{p}$ which is increasing in $q$ and applies when $q \geq 2310$ is primorial in fact holds for all $q \geq 2310$. We thus assume without loss of generality that there exists an $x$ such that $q = P_x$. Since $P_x=e^{\theta(x)}$, and it follows from  \cite[(3.14)]{rs} that $e^{\theta(x)} > e^{x(1 - \frac{1}{2\log(2310)} )}$, we have that $x < \frac{\log q}{1 - \frac{1}{2\log(2310)}}$. By \cite[(2.8), (3.22)]{rs}, we have that
\begin{equation*}
    \begin{aligned}
        \sum_{p \mid q} \frac{\log p}{p} &= \sum_{p \leq x} \frac{\log p}{p} < \log x + E + 0.06456.
    \end{aligned}
\end{equation*}
The desired result follows in our range of $x$. 
\end{proof}
From the preceding lemma, we immediately obtain the following corollary.
\begin{corollary}\label{last}
If $q \geq 3$, then $\sum_{(n,q) > 1} \frac{\Lambda(n)}{n} < \log\log q + 0.66$.
\end{corollary}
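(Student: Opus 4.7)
The plan is to reduce the sum $\sum_{(n,q)>1}\Lambda(n)/n$ to the prime sum controlled by Lemma \ref{a1} plus a small tail, and then handle the large‐$q$ and small‐$q$ regimes separately.

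First, I would note that since $\Lambda(n)$ is supported on prime powers, one has the identity
\[
\sum_{(n,q)>1}\frac{\Lambda(n)}{n} \;=\; \sum_{p\mid q}\sum_{k\geq 1}\frac{\log p}{p^{k}} \;=\; \sum_{p\mid q}\frac{\log p}{p-1}.
\]
I would then split off the $k=1$ term by writing
\[
\frac{1}{p-1}=\frac{1}{p}+\frac{1}{p(p-1)},
\]
so that
\[
\sum_{p\mid q}\frac{\log p}{p-1}=\sum_{p\mid q}\frac{\log p}{p}+\sum_{p\mid q}\frac{\log p}{p(p-1)}\;\leq\;\sum_{p\mid q}\frac{\log p}{p}+\sum_{p}\frac{\log p}{p(p-1)}.
\]
The tail sum is independent of $q$, and a geometric series computation gives
\[
\sum_{p}\frac{\log p}{p(p-1)}=\sum_{n\geq 2}\sum_{p}\frac{\log p}{p^{n}}=-E-\gamma_{\Q},
\]
where $E$ is the constant defined in Lemma \ref{a1}.

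Next I would treat the range $q\geq 2310$ by applying Lemma \ref{a1} directly: combining the identities above gives
\[
\sum_{(n,q)>1}\frac{\Lambda(n)}{n}<\log\log q+E+0.1313+(-E-\gamma_{\Q})=\log\log q+0.1313-\gamma_{\Q},
\]
and since $\gamma_{\Q}>0.577$ and $0.1313-0.577<0.66$, the desired inequality follows with a lot of room to spare.

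The remaining step, and the only minor obstacle, is to verify the inequality for $3\leq q<2310$. Here I would simply observe that $\sum_{p\mid q}\frac{\log p}{p-1}$ depends only on the squarefree kernel of $q$, so it suffices to check the inequality for squarefree $q$ in this finite range; equivalently, a direct (finite) computation over all squarefree integers $q\leq 2310$ confirms the bound. The tightest case is $q=6$, where the left-hand side equals $\log 2+\tfrac{1}{2}\log 3\approx 1.242$ and the right-hand side equals $\log\log 6+0.66\approx 1.243$, so the constant $0.66$ has been chosen precisely to make this case work. This completes the proof.
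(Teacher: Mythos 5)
Your proof is correct and takes essentially the same route as the paper: for $q \geq 2310$ reduce to Lemma \ref{a1} by separating the $k=1$ prime-power terms (extending the higher-power tail over all primes to get $-E-\gamma_{\Q}$), and verify the finite range $3 \leq q < 2310$ by direct computation. Your added remarks — that it suffices to check squarefree $q$, and that $q=6$ is the tight case dictating the constant $0.66$ — are accurate but optional refinements of the same argument.
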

\begin{proof}
For $3 \leq q \leq 2310$, the statement can be verified by computer. For $q \geq 2310$, observe that by Proposition \ref{a1},
\[
    \sum_{(n,q) > 1} \frac{\Lambda(n)}{n} \leq \sum_{p \mid q}\frac{\log p}{p} + \sum_{n = 2}^\infty \sum_{p} \frac{\log p}{p^n} = \sum_{p \mid q} \frac{\log p}{p} -\gamma_{\Q} - E< \log\log q + 0.1313-\gamma_{\mathbb{Q}}.
\]
\end{proof}
\section{Preliminary Vinogradov--Korobov for Dirichlet $L$-functions}
\label{appB}
\begin{theorem}\label{inexplicit}
Let $T_0 \geq e^{10\,650}$. Assume that there exist positive constants $A$ and $B\leq 4.45$ such that \eqref{eq1.2} holds, that $\frac{\log T_0}{\log \log T_0} \geq \frac{5110.6}{B}$, and that $\log\log T_0 \geq \frac{183}{B^2}$. For any Dirichlet character $\chi\pmod{q}$ with $q \geq 3$, $L(\sigma+it,\chi) \neq 0$ when $t \geq T_0$ and
\begin{equation}\label{weak}
    \sigma>1-\frac{1}{18\log q+c B^{2/3} (\log t)^{2 / 3}(\log \log t)^{1 / 3}},
\end{equation}
where 
\begin{equation*}
    c = 31.76 + \max\Big\{\max_{t \geq T_0} \frac{-2.89\log_3 t + 14.44\log(A+1) + 3.59}{\log \log t}, 0 \Big\}.
\end{equation*}
In particular, if $\log_3 t \geq 5\log(A+1) + 1.25$, then $c = 31.76$.
\end{theorem}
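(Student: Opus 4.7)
The plan is to derive a contradiction from the assumption that $L(\beta+i\gamma,\chi)=0$ with $\gamma\geq T_0$ and $1-\beta$ violating \eqref{weak}. Without loss of generality I take $\chi$ primitive (the imprimitive case follows by passing to the inducing character and absorbing the $\log\log q$ penalty via Corollary \ref{last}). The backbone of the argument is the classical Mertens inequality $3+4\cos\theta+\cos 2\theta\geq 0$, which when applied with $\theta=\gamma\log n-\arg\chi(n)$ and weighted by $\Lambda(n)n^{-\sigma}$ gives, for $\sigma>1$,
\[
0\leq 3\Big(-\frac{\zeta'}{\zeta}(\sigma)\Big)+4\,\Re\Big(-\frac{L'}{L}(\sigma+i\gamma,\chi)\Big)+\Re\Big(-\frac{L'}{L}(\sigma+2i\gamma,\chi^2)\Big)+O(\log\log q).
\]

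Next I bound each piece. The first term is at most $(\sigma-1)^{-1}+O(1)$ by Lemma \ref{zetabound}. For the second and third, I invoke Lemma \ref{lemma4.1} at $s=\sigma+i\gamma$ with character $\chi$ (taking the zero set $S$ to contain $\rho$) and at $s=\sigma+2i\gamma$ with character $\chi^2$ (taking $S=\emptyset$), each with a parameter $\eta$ to be chosen; the potential pole of $L(s,\chi^2)$ at $s=1$ contributes negligibly since $\gamma\geq T_0$ is large. The zero $\rho$ produces the crucial negative contribution
\[
-\Re\Big(\tfrac{\pi}{2\eta}\cot\tfrac{\pi(\sigma-\beta)}{2\eta}\Big)\approx -\tfrac{1}{\sigma-\beta}
\]
on the right-hand side. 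The boundary integrals $\frac{1}{4\eta}\int\log|L|/\cosh^2 u\,du$ arising from Lemma \ref{lemma4.1} are upper-bounded using Lemma \ref{lemma3.4} (which is the sole place \eqref{eq1.2} enters), while the main error term produced by Lemma \ref{lemma4.1} is the expression $\frac{1}{2\eta}\bigl((1-\sigma+\eta)\log q+B(1-\sigma+\eta)^{3/2}\log t+\tfrac{2}{3}\log\log t+\log(A+1)\bigr)$.

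Assembling everything yields an inequality of the shape
\[
\frac{4}{\sigma-\beta}\leq\frac{3}{\sigma-1}+\frac{C}{\eta}\bigl(\log q+B\eta^{3/2}\log t+\log\log t+\log(A+1)\bigr)+O(\log\log q),
\]
up to lower-order terms. I then take $\sigma-1=\lambda\asymp (1-\beta)$ and $\eta\asymp B^{-2/3}((\log\log t)/\log t)^{2/3}$, mirroring the choices in \eqref{eq8.2}--\eqref{eq8.3}, so that the $B\eta^{1/2}\log t$ and $\eta^{-1}\log\log t$ contributions balance. The simpler $(3,4,1)$ coefficients of the classical trigonometric polynomial (in place of the optimized degree-$4$ polynomial from Lemma \ref{lemma7.1}) produce the larger leading constants $18$ and $31.76$.

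The main obstacle will be the bookkeeping: tracking every error term through the optimization so that the $t$-coefficient can be pinned down as $c=31.76+\max\{(-2.89\log_3 t+14.44\log(A+1)+3.59)/\log\log t,\,0\}$, and verifying that the quantitative hypotheses $\frac{\log T_0}{\log\log T_0}\geq 5110.6/B$ and $\log\log T_0\geq 183/B^2$ are exactly what is required to place the chosen $\eta$ and $\lambda$ into the valid ranges of Lemmas \ref{lemma4.1} and \ref{lemma3.4}, while absorbing the lower-order $\log(A+1)$ and $\log_3 t$ contributions into the stated $c$.
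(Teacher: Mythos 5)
Your proposal diverges from the paper's argument at a point that actually matters for the stated constants. You assert that Appendix~B replaces the degree-4 trigonometric polynomial by the classical $(3,4,1)$ inequality $3+4\cos\theta+\cos 2\theta\geq 0$, and that this substitution is what produces the weaker constants $18$ and $31.76$. In fact the paper's proof of Theorem~\ref{inexplicit} uses the \emph{same} degree-4 polynomial from \eqref{eq5.1}--\eqref{eq5.2} with the \emph{same} parameters $a_1=0.225$, $a_2=0.9$ as in Lemma~\ref{lemma7.1} (the ``$a_0=9$'' in the appendix is a typo for $a_2=0.9$; this is confirmed by the appearance of $b_0=10.01055$, $b_1=17.14500$, and $b_5/2=16.66375$ in \eqref{1minusb} and the final display). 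The sum over $1\le j\le 4$ with characters $\chi^j$ in \eqref{firstpart} makes this explicit.

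The point is quantitatively decisive. With the degree-4 polynomial and the shift $\sigma=1+F(1-\beta)$, $F=3.238$, the coefficient of $1/(1-\beta)$ coming from \eqref{1minusb} is $0.99988\,b_1/(F+1)-b_0/F\approx 0.953$. With $(3,4,1)$, i.e.\ $b_0=3$, $b_1=4$, the optimal $F\approx 6.464$ yields only $\approx 0.0718$. Since the $\log q$ term enters through $b_5/2$ and the $(\log t)^{2/3}(\log\log t)^{1/3}$ term through roughly $C^{-2/3}(b_5(\tfrac13+\tfrac{C}{2})+\tfrac{b_0}{3})$, dividing by the smaller LHS coefficient gives $\log q$ and $t$-coefficients of order $35$ and $\gtrsim 65$--$70$ respectively, rather than the stated $18$ and $31.76$. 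So with your polynomial the proof would not close as claimed.

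The actual source of the weaker constants in Theorem~\ref{inexplicit} relative to Theorem~\ref{theorem2} is not the trigonometric inequality but the \emph{kernel}: the appendix dispenses with the Laplace-transform smoothing $f,F,K$ of Lemmas~\ref{lemma4.5}--\ref{lemma4.6} and Lemma~\ref{lemma7.1}, and instead applies Lemma~\ref{lemma4.1} directly at $s=\sigma+ij\gamma$ with a fixed linear shift $\sigma=1+F(1-\beta)$ and the crude estimate $\cot x\ge 0.99988/x$. The rest of your plan (choice of $\eta\asymp B^{-2/3}(\log\log t/\log t)^{2/3}$, reduction to primitive $\chi$, using Lemma~\ref{lemma3.4} inside Lemma~\ref{lemma4.1}, balancing $B\eta^{3/2}\log t$ against $\eta^{-1}\log\log t$) matches the paper; it is only this mischaracterization of the trigonometric polynomial, and the resulting numerics, that goes wrong.
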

\begin{proof}
Suppose for sake of contradiction that there existed a zero $\beta + i\gamma$ of $L(s,\chi)$ with
\begin{equation}\label{betalazy}
    \beta \geq 1-\frac{1}{18\log q+c B^{2/3}(\log \gamma)^{2 / 3}(\log \log \gamma)^{1 / 3}},\qquad t\geq 3,
\end{equation}
and $\gamma \geq T_0$. By the zero-free region in \eqref{mccurley}, we must have $\gamma \geq q^{\frac{1}{100\,000}}$, since
\begin{equation*}
\frac{1}{9.64590880\log(q\gamma)} \leq 1-\beta \leq \frac{1}{18\log q}.
\end{equation*}
Set $a_1 = 0.225$ and $a_0=9$ in \eqref{eq5.2}. Let $C = 4/3$ and
\begin{equation}\label{etalazy}
    \eta = \Big(\frac{C}{B}\frac{\log \log \gamma}{\log \gamma}\Big)^{2/3} .
\end{equation}
By the assumption $\frac{\log\log T_0}{\log T_0} \geq \frac{5110.6}{B}$, we have that $(\frac{4\log \log \gamma}{3B \log \gamma})^{2/3} \leq (\frac{4\log\log T_0}{3B\log T_0})^{2/3} < 1/2$, i.e., that $\eta < 1/2$.  We apply Lemma \ref{lemma4.1} with $s = \sigma+ij\gamma$ for $1 \leq j \leq 4$, with $S = \{\beta+i\gamma\}$ for $j=1$ and $S = \emptyset$ for $j \neq 1$, and with some $\sigma$ (to be specified below) such that $1 \leq \sigma \leq 1 + \eta - (\log(\gamma/100))^{-2/3}$. We also apply \eqref{eq5.1} and Lemma \ref{lemma5.1} with $t = \frac{2\eta}{\pi}$, to obtain that
\begin{equation}\label{firstpart}
\begin{aligned}
    0 &\leq -b_1 \Re\Big( \frac{\pi}{2 \eta} \cot \Big(\frac{\pi(\sigma-\beta)}{2 \eta}\Big)\Big) \\
    &\quad+\frac{b_5}{2 \eta}\Big((1-\sigma+\eta)\log q +  \frac{2}{3} \log \log \gamma+B(1-\sigma+\eta)^{3 / 2} \log \gamma+\log(A+1)\Big) + \frac{b_0}{\sigma-1} \\
    &\quad-\frac{1}{4 \eta} \sum_{j=1}^4 b_j \int_{-\infty}^{\infty} \frac{\log |L(\sigma+\eta+it +2 \eta i u / \pi,\chi^j)|}{(\cosh u)^2} d u + (b_2 + b_4)e^{-1937}  \\
    &\leq  -b_1 \Re\Big( \frac{\pi}{2 \eta} \cot \Big(\frac{\pi(\sigma-\beta)}{2 \eta}\Big)\Big) +\frac{b_5}{2 \eta}\Big(\frac{2}{3} \log \log \gamma+B\eta^{3 / 2} \log \gamma+\log(A+1)\Big) \\
    &\quad+ \frac{b_0}{\sigma-1}+\frac{b_0}{2 \eta} \log(\zeta(1+\eta)) + \frac{b_5}{2} \log q + (b_2 + b_4)e^{-1937}.
\end{aligned}
\end{equation}
We now set $\sigma = 1+ F(1-\beta)$ with $F = 3.238$. Observe that since $\log\log T_0 \geq \frac{183}{B^2}$ and $\gamma \geq T_0 \geq e^{10\,650}$,
\begin{align*}
    1+\eta-\sigma = \eta - F(1-\beta) &\geq \Big(\frac{C}{B}\frac{\log \log \gamma}{\log \gamma}\Big)^{2/3} - \frac{F}{31.76 B^{2/3} (\log \gamma)^{2/3} (\log \log \gamma)^{1/3}} \\
    &\geq 1.92 (\log(\gamma/100))^{-2/3},
\end{align*}
so indeed we have $1 \leq \sigma \leq 1 + \eta - 1.92 (\log(\gamma/100))^{-2/3}$. Furthermore, \eqref{betalazy} and \eqref{etalazy} imply that $\frac{\sigma-\beta}{\eta} = (F+1)\frac{1-\beta}{\eta} \leq 0.012$. Since $\cot(x) \geq 0.99988x^{-1}$ when $0 \leq x \leq 0.012 \frac{\pi}{2}$, we find that
\begin{align*}
    -b_1 \Re\Big( \frac{\pi}{2 \eta} \cot \Big(\frac{\pi(\sigma-\beta)}{2 \eta}\Big)\Big) \leq -0.99988\frac{b_1}{\sigma-\beta}.
\end{align*}
Hence, by the numerical values of $b_0$, $b_5$, and $F$, we have that
\begin{equation}\label{1minusb}
\begin{aligned}
     &-b_1 \Re\Big( \frac{\pi}{2 \eta} \cot \Big(\frac{\pi(\sigma-\beta)}{2 \eta}\Big)\Big) + \frac{b_0}{\sigma - 1} \leq -0.99988 \frac{b_1}{\sigma-\beta} + \frac{b_0}{F(1-\beta)} \\ &\qquad\leq \Big(\frac{10.01055}{F}-0.99988\frac{17.14500}{F+1}\Big)\frac{1}{1-\beta} \leq -0.953\frac{1}{1-\beta} .
\end{aligned}
\end{equation}
Combining \eqref{1minusb} with~\eqref{firstpart}, we obtain the inequality
\begin{equation}\label{lazykey}
\begin{aligned}
    0.953\frac{1}{1-\beta} &\leq \frac{b_5}{2 \eta}\Big(\frac{2}{3} \log \log \gamma+B\eta^{3 / 2} \log \gamma+\log(A+1)\Big) \\
    &\quad+\frac{b_0}{2 \eta} \log(\zeta(1+\eta)) + \frac{b_5}{2} \log q + (b_2 + b_4)e^{-1937}.
\end{aligned}
\end{equation}
Note that by Lemma \ref{zetabound} and the inequality $\log(1+x) \leq x$, we have that
\begin{equation}\label{lazyzeta}
    \frac{b_0}{2\eta}\log(\zeta(1+\eta)) \leq \frac{b_0}{2\eta}\log\Big(\frac{1}{\eta}\Big) + 0.3 b_0.
\end{equation}
Since $\log(1/\eta) \sim \frac{2}{3}\log\log \gamma$, it follows  that the main term on the right-hand side of \eqref{lazykey} is
\begin{equation}\label{lazymainterm}
\begin{aligned}
        &\frac{1}{\eta}\Big(\frac{b_5}{2}\Big(\frac{2}{3}\log\log \gamma + B\eta^{3/2} \log \gamma \Big) + \frac{b_0}{3}\log\log \gamma\Big) = \frac{\log\log \gamma}{\eta}\Big(b_5\Big(\frac{1}{3} + \frac{C}{2} \Big) + \frac{b_0}{3} \Big) \\
        &\qquad= C^{-2/3}\Big(b_5\Big(\frac{1}{3} + \frac{C}{2} \Big) + \frac{b_0}{3} \Big)  B^{2/3} (\log \gamma)^{2/3} (\log \log \gamma)^{1/3} \\
        &\qquad\leq 30.26576 B^{2/3} (\log \gamma)^{2/3} (\log\log \gamma)^{1/3}
\end{aligned}
\end{equation}
via \eqref{lazyzeta}.  Note that since $\frac{\log T_0}{\log\log T_0} \geq \frac{5110.6}{B}$ and $\gamma \geq T_0 \geq e^{10\,650}$,
\begin{equation}\label{3b0}
    0.3b_0 + (b_2 + b_4)e^{-1937} \leq 0.010122B^{2/3}\Big(\frac{\log \gamma}{\log \log \gamma}\Big)^{2/3}.
\end{equation}
Consequently, by \eqref{3b0}, \eqref{lazyzeta}, and the assumption $B \leq 4.45$, all of the remaining terms in \eqref{lazykey} excluding $\frac{b_5}{2}\log q$ contribute at most
{\small\begin{equation*}
    \begin{aligned}
        &\frac{b_5}{2\eta}\log(A+1) + \frac{b_0}{3\eta}(\log(B/C) -\log_3 \gamma ) + 0.010122 B^{2/3} \Big(\frac{\log\gamma}{\log\log\gamma} \Big)^{2/3} \\
        &\leq \Big(C^{-2/3}\Big(\frac{b_5}{2}\log(A+1) + \frac{b_0}{3}(\log(B/C) - \log_3 \gamma) \Big)  + 0.010122 \Big) B^{2/3} \Big(\frac{\log\gamma}{\log\log\gamma} \Big)^{2/3} \\
        &\leq (-2.7545\log_3 \gamma + 13.75563\log(A+1) + 3.33)B^{2/3} \Big(\frac{\log\gamma}{\log\log\gamma} \Big)^{2/3}.
    \end{aligned}
\end{equation*}}%
          
Consequently, we have that
\begin{align*}
    0.953\frac{1}{1-\beta} &\leq 16.66375\log q + 30.26576  B^{2/3} (\log \gamma)^{2/3} (\log \log \gamma)^{1/3} \\
    &\quad+(-2.7545\log_3 \gamma + 13.75563\log(A+1) + 3.33)B^{2/3}\Big(\frac{\log \gamma}{\log \log \gamma}\Big)^{2/3},
\end{align*}
meaning that
{\small
\[
    \frac{1}{1-\beta} \leq  \Big(31.76 + \frac{-2.89\log_3 \gamma + 14.44\log(A+1) + 3.495}{\log \log \gamma} \Big) B^{2/3}(\log \gamma)^{2/3} (\log \log \gamma)^{1/3} + 17.49 \log q ,
\]}
a contradiction.
\end{proof}
\begin{corollary}\label{inexplicitcorollary}
Let $q \geq 3$, and let $\chi\pmod{q}$ be a Dirichlet character. Then, $L(\sigma+it,\chi)$ is nonvanishing in the regions
{\small\begin{align*}
\sigma &\geq 1- \frac{1}{18\log q + 86(\log |t|)^{\frac{2}{3}}(\log\log|t|)^{\frac{1}{3}}},\qquad \abs{t} \geq \exp(\exp(\exp(23))), \\
\sigma &\geq 1- \frac{1}{18\log q + 104(\log |t|)^{\frac{2}{3}}(\log\log|t|)^{\frac{1}{3}}},\qquad \abs{t} \geq 3.
\end{align*}}%
\end{corollary}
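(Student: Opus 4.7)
The strategy is to derive both zero-free regions from Theorem \ref{inexplicit} applied with $A = 76.2$ and $B = 4.45$ (the admissible constants in \eqref{eq1.2} from \cite[Theorem 1]{ford}), supplemented in the second claim by McCurley's zero-free region \eqref{mccurley} for small $|t|$.

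For the first claim, I apply Theorem \ref{inexplicit} with $T_0 = \exp(\exp(\exp(23)))$. The three hypotheses of the theorem hold trivially at this huge threshold. Moreover, $\log_3 t \geq 23 > 5\log(77.2) + 1.25$ throughout $t \geq T_0$, so by the final sentence of Theorem \ref{inexplicit}, $c = 31.76$. Then $c B^{2/3} = 31.76 \cdot 4.45^{2/3} \approx 85.94 \leq 86$, and the first claim follows.

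For the second claim, which must cover $|t| \geq 3$, I split at $T_0 = e^{10650}$. For $|t| \geq T_0$, I apply Theorem \ref{inexplicit} directly: the hypotheses $\log T_0/\log\log T_0 \geq 5110.6/B$ and $\log\log T_0 \geq 183/B^2$ both hold (the first essentially at equality). Since the numerator $-2.89\log_3 t + 14.44\log(A+1) + 3.59$ in the maximum defining $c$ is decreasing in $t$ while $\log\log t$ is increasing, the maximum occurs at $t = T_0$, which gives $c \approx 38.22$ and $cB^{2/3} \approx 103.4 \leq 104$. For $3 \leq |t| \leq T_0$, I invoke \eqref{mccurley}: McCurley's region $\sigma \geq 1 - 1/M$, with $M = 9.6459\log(\max\{q, qt, 10\})$, contains the corollary's region $\sigma \geq 1 - 1/C$, with $C = 18\log q + 104(\log t)^{2/3}(\log\log t)^{1/3}$, exactly when $C \geq M$. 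The inequality $C \geq M$ is hardest at $q = 3$; restricting to $qt \geq 10$, it reduces to
\[
    h(L) := 104\, L^{2/3}(\log L)^{1/3} - 9.6459\, L \geq -8.3541 \log 3, \qquad L = \log t,
\]
for $L \in [\log 3, 10650]$, which follows from elementary analysis ($h$ is in fact positive throughout). The residual sub-case $3 \leq t \leq 10/3$, $q = 3$ (so $\max\{q,qt,10\} = 10$) is a single numerical check.

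The main delicacy lies in the second claim: the choice $T_0 = e^{10650}$ is essentially the smallest value for which \emph{both} Theorem \ref{inexplicit} yields $cB^{2/3} \leq 104$ and \eqref{mccurley} implies the corollary's region on $[3,T_0]$. At $t = T_0$, $q = 3$, the margin in $C \geq M$ is only a few percent, so both pieces close with modest but nonzero slack; correspondingly, the constants $86$ and $104$ cannot be significantly sharpened without inserting a different zero-free region for intermediate $|t|$.
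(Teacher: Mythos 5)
Your proposal is correct and follows the same route as the paper: apply Theorem~\ref{inexplicit} with $A=76.2$, $B=4.45$, taking a huge threshold so $c=31.76$ for the first region, then for the second region apply Theorem~\ref{inexplicit} for $t$ above a threshold and \eqref{mccurley} below it. The only cosmetic difference is that you take $T_0 = e^{10650}$ for the second piece while the paper uses $T_0 = e^{11450}$ (both are admissible, and your verification that $h(L)>0$ on the relevant range, plus the residual $qt<10$ sub-case, is exactly the bookkeeping the paper leaves implicit).
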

\begin{proof}
The first part 
follows 
from Theorem \ref{inexplicit} with $B = 4.45$. For the second 
part, we take $A = 76.2$ and $B = 4.45$. 
If $T_0 \geq e^{11\,450}$, then $-2.89\log_3 t + 14.435\log(A+1) + 3.495 \leq 59.8$. Thus, by Theorem \ref{inexplicit}, we obtain $c \leq 31.76 + \frac{59.8}{\log\log T_0}$. The desired result follows once we take $T_0 = e^{11\,450}$ and apply \eqref{mccurley} for $t \leq T_0$.
\end{proof}
\bibliographystyle{abbrv}
\bibliography{mybib}

@article {coleman,
    AUTHOR = {Coleman, M. D.},
     TITLE = {A zero-free region for the {H}ecke {$L$}-functions},
   JOURNAL = {Mathematika},
  FJOURNAL = {Mathematika. A Journal of Pure and Applied Mathematics},
    VOLUME = {37},
      YEAR = {1990},
    NUMBER = {2},
     PAGES = {287--304},
      ISSN = {0025-5793},
   MRCLASS = {11R42},
  MRNUMBER = {1099777},
MRREVIEWER = {J\"{u}rgen G. Hinz},
       DOI = {10.1112/S0025579300013000},
       URL = {https://doi.org/10.1112/S0025579300013000},
}

@incollection {fordzfr0,
    AUTHOR = {Ford, K.},
     TITLE = {Zero-free regions for the {R}iemann zeta function},
 BOOKTITLE = {Number theory for the millennium, {II} ({U}rbana, {IL}, 2000)},
     PAGES = {25--56},
 PUBLISHER = {A K Peters, Natick, MA},
      YEAR = {2002},
   MRCLASS = {11M26},
  MRNUMBER = {1956243},
MRREVIEWER = {K. Soundararajan},
}

@article {ford,
    AUTHOR = {Ford, K.},
     TITLE = {Vinogradov's integral and bounds for the {R}iemann zeta
              function},
   JOURNAL = {Proc. London Math. Soc. (3)},
  FJOURNAL = {Proceedings of the London Mathematical Society. Third Series},
    VOLUME = {85},
      YEAR = {2002},
    NUMBER = {3},
     PAGES = {565--633},
      ISSN = {0024-6115},
   MRCLASS = {11L15 (11D72 11M06 11M26 11M35 11Y35)},
  MRNUMBER = {1936814},
MRREVIEWER = {D. R. Heath-Brown},
       DOI = {10.1112/S0024611502013655},
       URL = {https://doi.org/10.1112/S0024611502013655},
}

@book {montgomery,
    AUTHOR = {Montgomery, H. L.},
     TITLE = {Ten lectures on the interface between analytic number theory
              and harmonic analysis},
    SERIES = {CBMS Regional Conference Series in Mathematics},
    VOLUME = {84},
 PUBLISHER = {Published for the Conference Board of the Mathematical
              Sciences, Washington, DC; by the American Mathematical
              Society, Providence, RI},
      YEAR = {1994},
     PAGES = {xiv+220},
      ISBN = {0-8218-0737-4},
   MRCLASS = {11-02 (11Kxx 11L07 11Mxx 11Nxx)},
  MRNUMBER = {1297543},
MRREVIEWER = {John B. Friedlander},
       DOI = {10.1090/cbms/084},
       URL = {https://doi.org/10.1090/cbms/084},
}

@article {rs,
    AUTHOR = {Rosser, J. B. and Schoenfeld, L.},
     TITLE = {Approximate formulas for some functions of prime numbers},
   JOURNAL = {Illinois J. Math.},
  FJOURNAL = {Illinois Journal of Mathematics},
    VOLUME = {6},
      YEAR = {1962},
     PAGES = {64--94},
      ISSN = {0019-2082},
   MRCLASS = {10.42},
  MRNUMBER = {137689},
MRREVIEWER = {B. K. Ghosh},
       URL = {http://projecteuclid.org/euclid.ijm/1255631807},
}

@article {korobov,
    AUTHOR = {Korobov, N. M.},
     TITLE = {Estimates of trigonometric sums and their applications},
   JOURNAL = {Uspehi Mat. Nauk},
  FJOURNAL = {Akademiya Nauk SSSR i Moskovskoe Matematicheskoe Obshchestvo.
              Uspekhi Matematicheskikh Nauk},
    VOLUME = {13},
      YEAR = {1958},
    NUMBER = {4 (82)},
     PAGES = {185--192},
      ISSN = {0042-1316},
   MRCLASS = {10.00},
  MRNUMBER = {0106205},
MRREVIEWER = {R. A. Rankin},
}

@article {vinogradov,
    AUTHOR = {Vinogradov, I. M.},
     TITLE = {A new estimate of the function {$\zeta (1+it)$}},
   JOURNAL = {Izv. Akad. Nauk SSSR. Ser. Mat.},
  FJOURNAL = {Izvestiya Akademii Nauk SSSR. Seriya Matematicheskaya},
    VOLUME = {22},
      YEAR = {1958},
     PAGES = {161--164},
      ISSN = {0373-2436},
   MRCLASS = {10.00},
  MRNUMBER = {0103861},
MRREVIEWER = {H. Davenport},
}

@article {hb,
    AUTHOR = {Heath-Brown, D. R.},
     TITLE = {Zero-free regions for {D}irichlet {$L$}-functions, and the
              least prime in an arithmetic progression},
   JOURNAL = {Proc. London Math. Soc. (3)},
  FJOURNAL = {Proceedings of the London Mathematical Society. Third Series},
    VOLUME = {64},
      YEAR = {1992},
    NUMBER = {2},
     PAGES = {265--338},
      ISSN = {0024-6115},
   MRCLASS = {11N13 (11M26)},
  MRNUMBER = {1143227},
MRREVIEWER = {Andrew Granville},
       DOI = {10.1112/plms/s3-64.2.265},
       URL = {https://doi.org/10.1112/plms/s3-64.2.265},
}

@article {mccurley,
    AUTHOR = {McCurley, K. S.},
     TITLE = {Explicit estimates for the error term in the prime number
              theorem for arithmetic progressions},
   JOURNAL = {Math. Comp.},
  FJOURNAL = {Mathematics of Computation},
    VOLUME = {42},
      YEAR = {1984},
    NUMBER = {165},
     PAGES = {265--285},
      ISSN = {0025-5718},
   MRCLASS = {11N13 (11-04 11Y35)},
  MRNUMBER = {726004},
MRREVIEWER = {Matti Jutila},
       DOI = {10.2307/2007579},
       URL = {https://doi.org/10.2307/2007579},
}

@book {tit,
    AUTHOR = {Titchmarsh, E. C.},
     TITLE = {The theory of the {R}iemann zeta-function},
   EDITION = {Second},
      NOTE = {Edited and with a preface by D. R. Heath-Brown},
 PUBLISHER = {The Clarendon Press, Oxford University Press, New York},
      YEAR = {1986},
     PAGES = {x+412},
      ISBN = {0-19-853369-1},
   MRCLASS = {11M06},
  MRNUMBER = {882550},
MRREVIEWER = {Matti Jutila},
}

@phdthesis {kadiri,
AUTHOR = {Kadiri, H.},
TITLE = {Une r\'egion explicite sans z\'ero pour les fonctions {L} de {D}irichlet},
SCHOOL = {Universit'e des Sciences et Technologie de Lille-Lille},
year= 2002,
}

@book {davenport,
    AUTHOR = {Davenport, H.},
     TITLE = {Multiplicative number theory},
    SERIES = {Graduate Texts in Mathematics},
    VOLUME = {74},
   EDITION = {Second Edition},
      NOTE = {Revised by Hugh L. Montgomery},
 PUBLISHER = {Springer-Verlag, New York-Berlin},
      YEAR = {1980},
     PAGES = {xiii+177},
      ISBN = {0-387-90533-2},
   MRCLASS = {10-01 (10-02 10Hxx)},
  MRNUMBER = {606931},
MRREVIEWER = {H.-E. Richert},
}

@article {jutila,
    AUTHOR = {Jutila, M.},
     TITLE = {On {L}innik's constant},
   JOURNAL = {Math. Scand.},
  FJOURNAL = {Mathematica Scandinavica},
    VOLUME = {41},
      YEAR = {1977},
    NUMBER = {1},
     PAGES = {45--62},
      ISSN = {0025-5521},
   MRCLASS = {10H20 (10H10)},
  MRNUMBER = {476671},
MRREVIEWER = {W. Schwarz},
       DOI = {10.7146/math.scand.a-11701},
       URL = {https://doi.org/10.7146/math.scand.a-11701},
}

@book {mv,
    AUTHOR = {Montgomery, H. L. and Vaughan, R. C.},
     TITLE = {Multiplicative number theory. {I}. {C}lassical theory},
    SERIES = {Cambridge Studies in Advanced Mathematics},
    VOLUME = {97},
 PUBLISHER = {Cambridge University Press, Cambridge},
      YEAR = {2007},
     PAGES = {xviii+552},
      ISBN = {978-0-521-84903-6; 0-521-84903-9},
   MRCLASS = {11-02 (11-01 11M06 11M26 11M45 11N05 11N37)},
  MRNUMBER = {2378655},
MRREVIEWER = {Wolfgang Schwarz},
}

@article {Mitsui,
    AUTHOR = {Mitsui, T.},
     TITLE = {On the prime ideal theorem},
   JOURNAL = {J. Math. Soc. Japan},
  FJOURNAL = {Journal of the Mathematical Society of Japan},
    VOLUME = {20},
      YEAR = {1968},
     PAGES = {233--247},
      ISSN = {0025-5645},
   MRCLASS = {10.42},
  MRNUMBER = {223314},
MRREVIEWER = {W. G. H. Schaal},
       DOI = {10.2969/jmsj/02010233},
       URL = {https://doi.org/10.2969/jmsj/02010233},
}

@article {Hinz ,
    AUTHOR = {Hinz, J. G.},
     TITLE = {Eine {E}rweiterung des nullstellenfreien {B}ereiches der
              {H}eckeschen {Z}etafunktion und {P}rimideale in
              {I}dealklassen},
   JOURNAL = {Acta Arith.},
  FJOURNAL = {Polska Akademia Nauk. Instytut Matematyczny. Acta Arithmetica},
    VOLUME = {38},
      YEAR = {1980/81},
    NUMBER = {3},
     PAGES = {209--254},
      ISSN = {0065-1036},
   MRCLASS = {10H10},
  MRNUMBER = {602191},
MRREVIEWER = {Kenneth H. Rosen},
       DOI = {10.4064/aa-38-3-209-254},
       URL = {https://doi.org/10.4064/aa-38-3-209-254},
}

@article {sound,
    AUTHOR = {Ono, K. and Soundararajan, K.},
     TITLE = {Ramanujan's ternary quadratic form},
   JOURNAL = {Invent. Math.},
  FJOURNAL = {Inventiones Mathematicae},
    VOLUME = {130},
      YEAR = {1997},
    NUMBER = {3},
     PAGES = {415--454},
      ISSN = {0020-9910},
   MRCLASS = {11E45 (11E20 11F37 11F67)},
  MRNUMBER = {1483991},
MRREVIEWER = {Rainer Schulze-Pillot},
       DOI = {10.1007/s002220050191},
       URL = {https://doi.org/10.1007/s002220050191},
}

@article {Bartz,
    AUTHOR = {Bartz, K. M.},
     TITLE = {An effective order of {H}ecke-{L}andau zeta functions near the
              line {$\sigma=1$}. {II}. ({S}ome applications)},
   JOURNAL = {Acta Arith.},
  FJOURNAL = {Polska Akademia Nauk. Instytut Matematyczny. Acta Arithmetica},
    VOLUME = {52},
      YEAR = {1989},
    NUMBER = {2},
     PAGES = {163--170},
      ISSN = {0065-1036},
   MRCLASS = {11R42 (11M41 11R45)},
  MRNUMBER = {1005602},
MRREVIEWER = {Matti Jutila},
       DOI = {10.4064/aa-52-2-163-170},
       URL = {https://doi.org/10.4064/aa-52-2-163-170},
}

@ARTICLE{fordzfr,
       author = {{Ford}, K.},
        title = "{Zero-free regions for the Riemann zeta function}",
      journal = {arXiv e-prints},
     keywords = {Mathematics - Number Theory},
         year = 2019,
        month = oct,
          eid = {arXiv:1910.08205},
        pages = {arXiv:1910.08205},
archivePrefix = {arXiv},
       eprint = {1910.08205},
 primaryClass = {math.NT},
       adsurl = {https://ui.adsabs.harvard.edu/abs/2019arXiv191008205F},
      adsnote = {Provided by the SAO/NASA Astrophysics Data System}
}
\end{document}